\documentclass[11pt]{amsart}
\usepackage[margin=1in]{geometry}
\usepackage{latexsym}

\usepackage{amsfonts}
\usepackage{amsmath}
\usepackage{amssymb}
\usepackage{amsthm}
\usepackage{enumerate}
\setlength{\parskip}{0em}
\usepackage[hang,flushmargin]{footmisc}
\usepackage{caption}
\usepackage{tabu}
\usepackage{mathrsfs}
\usepackage{amsaddr}
\usepackage{subfig}
\usepackage{mathtools}

\usepackage{xcolor}

\definecolor{MyBlue}{rgb}{0,0,1}
\definecolor{MyRed}{rgb}{1,0,0}
\definecolor{MyGreen}{rgb}{0,1,0}
\definecolor{MyIndigo}{rgb}{0.7254,0,1}
\definecolor{MyOrange}{rgb}{1,0.4431,0}

\usepackage{yhmath}
\usepackage{graphicx}
\usepackage{epstopdf}
\usepackage{epsfig}
\usepackage{caption}

\DeclareFontFamily{U}{mathx}{\hyphenchar\font45}
\DeclareFontShape{U}{mathx}{m}{n}{
      <5> <6> <7> <8> <9> <10>
      <10.95> <12> <14.4> <17.28> <20.74> <24.88>
      mathx10
      }{}
\DeclareSymbolFont{mathx}{U}{mathx}{m}{n}
\DeclareFontSubstitution{U}{mathx}{m}{n}
\DeclareMathAccent{\widecheck}{0}{mathx}{"71}
 
\usepackage{bm} 
\usepackage[noadjust]{cite}

\makeatletter

\newtheorem{theorem}{Theorem}[section]
\newtheorem{lemma}[theorem]{Lemma}
\newtheorem{proposition}[theorem]{Proposition}
\newtheorem{corollary}[theorem]{Corollary}

\newtheorem{conjecture}[theorem]{Conjecture}
\newtheorem{question}[theorem]{Question}

\theoremstyle{definition}
\newtheorem{definition}[theorem]{Definition}

\newenvironment{remark}
  {\pushQED{\qed}\remarkx}
  {\popQED\endremarkx}

\newenvironment{example}
  {\pushQED{\qed}\examplex}
  {\popQED\endexamplex}

\DeclareMathOperator{\skel}{skel}
\DeclareMathOperator{\slmax}{slmax}
\DeclareMathOperator{\fen}{fen}
\DeclareMathOperator{\Fen}{Fen}
\DeclareMathOperator{\Des}{Des}
\DeclareMathOperator{\syl}{syl}
\DeclareMathOperator{\shift}{shift}
\DeclareMathOperator{\des}{des}
\DeclareMathOperator{\post}{post}
\DeclareMathOperator{\VHC}{VHC}
\DeclareMathOperator{\Sky}{Sky}
\DeclareMathOperator{\DB}{DB}
\DeclareMathOperator{\AVHC}{AVHC}
\DeclareMathOperator{\rev}{rev}
\DeclareMathOperator{\revstack}{revstack}
\DeclareMathOperator{\stack}{\mathtt{s}}

\usepackage{etoolbox}

\newcommand{\includeSymbol}[1]{\ensuremath{%
	\mathchoice
		{\raisebox{-.7mm}{\includegraphics[height=2.2ex]{#1}}}	
		{\raisebox{-.7mm}{\includegraphics[height=2.2ex]{#1}}}
		{\raisebox{-.6mm}{\includegraphics[height=1.6ex]{#1}}}
		{\raisebox{-.5mm}{\includegraphics[height=1ex]{#1}}}
}}

\robustify{\includeSymbol}
\newcommand{\noneCirc}{\includeSymbol{none}}
\newcommand{\upCirc}{\includeSymbol{up}}
\newcommand{\downCirc}{\includeSymbol{down}}
\newcommand{\upDownCirc}{\includeSymbol{updown}}

\newcommand{\dfn}[1]{\textcolor{blue}{\emph{#1}}}

\begin{document}
\title{Stack-Sorting for Coxeter Groups}
\author{Colin Defant}
\address{Princeton University \\ Department of Mathematics \\ Princeton, NJ 08544}
\email{cdefant@princeton.edu}

\begin{abstract}
Given an essential semilattice congruence $\equiv$ on the left weak order of a Coxeter group $W$, we define the \emph{Coxeter stack-sorting operator} ${\bf S}_\equiv:W\to W$ by ${\bf S}_\equiv(w)=w\left(\pi_\downarrow^\equiv(w)\right)^{-1}$, where $\pi_\downarrow^\equiv(w)$ is the unique minimal element of the congruence class of $\equiv$ containing $w$. When $\equiv$ is the sylvester congruence on the symmetric group $S_n$, the operator ${\bf S}_\equiv$ is West's stack-sorting map. When $\equiv$ is the descent congruence on $S_n$, the operator ${\bf S}_\equiv$ is the pop-stack-sorting map. We establish several general results about Coxeter stack-sorting operators, especially those acting on symmetric groups. For example, we prove that if $\equiv$ is an essential lattice congruence on $S_n$, then every permutation in the image of ${\bf S}_\equiv$ has at most $\left\lfloor\frac{2(n-1)}{3}\right\rfloor$ right descents; we also show that this bound is tight. 

We then introduce analogues of permutree congruences in types $B$ and $\widetilde A$ and use them to isolate Coxeter stack-sorting operators $\stack_B$ and $\widetilde\stack$ that serve as canonical type-$B$ and type-$\widetilde A$ counterparts of West's stack-sorting map. We prove analogues of many known results about West's stack-sorting map for the new operators $\stack_B$ and $\widetilde\stack$. For example, in type $\widetilde A$, we obtain an analogue of Zeilberger's classical formula for the number of $2$-stack-sortable permutations in $S_n$. 
\end{abstract}

\maketitle

\bigskip

\section{Introduction}\label{Sec:Intro} 

\subsection{Coxeter Stack-Sorting Operators}
A \dfn{semilattice congruence} on a meet-semilattice $M$ is an equivalence relation $\equiv$ on $M$ that respects meets. More precisely, this means that if $x_1\equiv x_2$ and $y_1\equiv y_2$, then $(x_1\wedge y_1)\equiv (x_2\wedge y_2)$. If $M$ has a minimal element and is locally finite, then every congruence class of $\equiv$ has a unique minimal element, and we denote by $\pi_\downarrow^\equiv:M\to M$ the projection map that sends each element of $M$ to the unique minimal element of its congruence class. We omit the superscript and write $\pi_\downarrow$ when the congruence $\equiv$ is clear from context. If $M$ is a lattice, then a \dfn{lattice congruence} on $M$ is an equivalence relation on $M$ that respects both meets and joins, meaning $x_1\equiv x_2$ and $y_1\equiv y_2$ together imply $(x_1\wedge y_1)\equiv (x_2\wedge y_2)$ and $(x_1\vee y_1)\equiv (x_2\vee y_2)$.

Let $(W,S)$ be a Coxeter system, and let $\leq_L$ and $\leq_R$ denote, respectively, the left and right weak orders on $W$. The posets $(W,\leq_L)$ and $(W,\leq_R)$ are isomorphic to each other, and a foundational theorem due to Bj\"orner \cite{Bjorner} states that they are complete meet-semilattices. We write $x\wedge y$ for the meet of two elements $x,y\in W$ in the left weak order. If $W$ is finite, then the left and right weak orders on $W$ are lattices; in this case, we write $x\vee y$ for the join of $x$ and $y$ in the left weak order. 

Semilattice congruences and lattice congruences on weak orders of Coxeter groups have been studied extensively \cite{Albertin, ChatelPilaud, Pilaud, Pilaud2, Pilaud3, Law, Giraudo, Hoang, ReadingCambrian, ReadingLattice2, ReadingLattice, ReadingSortable, ReadingSpeyerCambrian2, ReadingSpeyerCambrian, ReadingSpeyerFrameworks, ReadingSpeyerSortable, ReadingSpeyerSortable2}, especially due to their strong connections with polyhedral geometry, Hopf algebras, and cluster algebras. One of the quintessential examples of a lattice congruence is provided by the sylvester congruence $\equiv_{\syl}$ on the symmetric group $S_n$ \cite{Hivert}, which is closely related to Tamari lattices, associahedra, and the Hopf algebra of binary plane trees \cite{Loday, LodayRonco}; we define this congruence in Section~\ref{SecPermutrees}. The sylvester congruence is the prototypical example of a Cambrian congruence \cite{ReadingCambrian, ReadingSortable, ReadingSpeyerCambrian2, ReadingSpeyerCambrian, ReadingSpeyerFrameworks, ReadingSpeyerSortable, ReadingSpeyerSortable2}. Other notable lattice congruences on symmetric groups are the permutree congruences \cite{Pilaud}, the $k$-twist congruences \cite{Pilaud3}, and the Baxter congruences \cite{Law, Giraudo}.  

Another important semilattice congruence, which is defined on an arbitrary Coxeter group $W$, is the \dfn{descent congruence}, which we denote by $\equiv_{\des}$. Two elements of $W$ are equivalent in the descent congruence if and only if they have the same right descent set. The descent congruence on the symmetric group $S_n$ provides one of the other primary motivating examples of a permutree congruence besides the sylvester congruence. 

A semilattice congruence $\equiv$ on the left weak order of $W$ is called \dfn{essential}\footnote{As discussed in \cite{Albertin, Hoang, Pilaud2, ReadingLattice2}, lattice congruences of symmetric groups give rise to fans called \emph{quotient fans}. A lattice congruence on $S_n$ is essential if and only if the corresponding quotient fan is essential.} if the identity element $e\in W$ belongs to a singleton congruence class. 

\begin{definition}\label{DefCoxStackOp}
Let $(W,S)$ be a Coxeter system, and let $\equiv$ be an essential semilattice congruence on the left weak order of $W$. Define the \dfn{Coxeter stack-sorting operator} ${\bf S}_\equiv:W\to W$ to be the map given by ${\bf S}_\equiv(w)=w\left(\pi_\downarrow^\equiv(w)\right)^{-1}$ for all $w\in W$.   
\end{definition}

The motivation for the name \emph{Coxeter stack-sorting operator} comes from two special cases. First, when $W$ is the symmetric group $S_n$ and $\equiv_{\syl}$ is the sylvester congruence, the map ${\bf S}_{\equiv_{\syl}}$ is West's stack-sorting map. Indeed, this is the content of \cite[Corollary 16]{DefantPolyurethane} (where the map $\pi_\downarrow^{\equiv_{\syl}}$ goes by the name $\text{swd}$), and it is also a special case of Proposition~\ref{PropCoxx1} below. This map was originally defined by West \cite{West} as a deterministic variant of a stack-sorting machine introduced by Knuth \cite{Knuth}. West's stack-sorting map, which we will often simply call \emph{the stack-sorting map}, has now received vigorous attention and has found connections with several other parts of combinatorics \cite{Bona, BonaSurvey, BonaBoca, Branden3, DefantTroupes, DefantCounting, DefantCatalan, DefantPolyurethane, DefantFertilitopes, DefantEngenMiller, Goulden, Mularczyk, Singhal, Zeilberger, Fang}. The second motivation for our terminology comes from the fact that when $\equiv_{\des}$ is the descent congruence on $W=S_n$, the map ${\bf S}_{\equiv_{\des}}$ is the pop-stack-sorting map. This function, which is a deterministic analogue of a pop-stack-sorting machine introduced by Avis and Newborn in \cite{Avis}, first appeared in a paper of Ungar's about directions determined by points in the plane \cite{Ungar}; it has received a great deal of attention over the past few years \cite{AlbertVatter, Asinowski, Asinowski2, DefantCoxeterPop, Elder, ClaessonPop, ClaessonPop2, Pudwell}. 

\begin{remark}\label{RemCox1}
In order to earn the title of \emph{sorting operator}, a map $f:W\to W$ better have the property that for every $w\in W$, there is some $t\geq 0$ such that $f^t(w)=f^{t+1}(w)=e$. This is precisely why we require $\equiv$ to be essential in Definition~\ref{DefCoxStackOp}. Indeed, if we allowed for the case where $x\equiv e$ for some $x\in W\setminus \{e\}$, then $x$ would be a fixed point of ${\bf S}_\equiv$. 
\end{remark}

\begin{remark}
The authors of the recent paper \cite{Cerbai} introduced a different generalization of West's stack-sorting map that uses \emph{pattern-avoiding stacks}; this notion has spawned several subsequent articles in recent years \cite{Baril, Berlow, Cerbai3, Cerbai2, DefantZheng}. While these pattern-avoiding stacks are certainly interesting, we believe our Coxeter stack-sorting operators are more natural from an algebraic and lattice-theoretic point of view. 

The recent article \cite{PermutreeSorting} also generalizes stack-sorting to the realm of permutrees. However, the approach and the results in that paper are quite different from ours. 
\end{remark}   
   
\begin{remark}
The downward projection map $\pi_\downarrow^{\equiv}$ is crucial when $\equiv$ is a Cambrian congruence on a Coxeter group $W$ because its image is the set of \emph{sortable elements} \cite{ReadingSortable, ReadingSpeyerSortable, ReadingSpeyerSortable2}. Although Reading originally named these elements \emph{sortable} because of the connection with West's stack-sorting map and Knuth's stack-sorting machine, he did not study Coxeter stack-sorting operators. For an arbitrary semilattice congruence $\equiv$ on $W$, the elements of the image of $\pi_\downarrow^{\equiv}$ can still be called the \emph{sortable elements} in our setting because they are the elements of $W$ that get sorted into the identity $e$ after only a single application of ${\bf S}_\equiv$.  
\end{remark}   
   
In \cite{DefantCoxeterPop}, the author investigated the Coxeter stack-sorting operators ${\bf S}_{\equiv_{\des}}$ corresponding to descent congruences on Coxeter groups, which were called \dfn{Coxeter pop-stack-sorting operators}. Our goal in this paper is to initiate the investigation of Coxeter stack-sorting operators more generally and to consider specific Coxeter stack-sorting operators that are more closely related to West's stack-sorting map. 

\subsection{Outline}
In order to motivate many of our results, it is helpful to discuss previous work on West's stack-sorting map. We do this in Section~\ref{SecStackBack}, where we simultaneously present a more thorough synopsis of our main theorems from Sections~\ref{SecDescents}, \ref{SecTypeB}, and \ref{SecAffine} than what we give here. Section~\ref{SecCoxeter} summarizes some basic facts and terminology related to Coxeter groups that we will need later. In Section~\ref{SecGeneral}, we establish results that hold for arbitrary Coxeter stack-sorting operators. For example, we will prove that the statistic that keeps track of the number of preimages of an element of $W$ under ${\bf S}_{\equiv}$ is a decreasing function on the left weak order. Section~\ref{SecDescents} investigates the maximum number of right descents that a permutation in $S_n$ in the image of a Coxeter stack-sorting operator can have. In Section~\ref{SecPermutrees}, we discuss permutrees and permutree congruences, and we show that Coxeter stack-sorting operators associated to permutree congruences can be described in terms of postorder readings of decreasing permutrees. This provides a useful combinatorial model for dealing with these \emph{permutree stack-sorting operators}. In Section~\ref{SecTypeB}, we introduce analogues of permutrees and permutree congruences for the hyperoctahedral groups $B_n$. One specific type-$B$ permutree congruence on $B_n$, which we call the \emph{type-$B$ sylvester congruence}, yields a Coxeter stack-sorting operator $\stack_B$ that can be seen as the canonical type-$B$ analogue of West's stack-sorting map; much of Section~\ref{SecTypeB} is devoted to studying this operator. In Section~\ref{SecAffine}, we introduce analogues of permutrees and permutree congruences for the affine symmetric groups $\widetilde S_n$. One specific choice of an affine permutree congruence on $\widetilde S_n$, which we call the \emph{affine sylvester congruence}, yields a Coxeter stack-sorting operator $\widetilde\stack$ that serves as a canonical type-$\widetilde A$ analogue of West's stack-sorting map; a large portion of Section~\ref{SecAffine} concerns this operator. Finally, Section~\ref{SecConclusion} lists several suggestions for potential future work. 

\section{West's Stack-Sorting Map}\label{SecStackBack} 

In this paper, a \dfn{permutation} of size $n$ is a bijection $w:[n]\to X$ for some $n$-element set $X\subseteq\mathbb Z$. We write permutations as words in one-line notation. The \dfn{symmetric group} $S_n$ is the set of permutations of the set $[n]=\{1,\ldots,n\}$. The \dfn{standardization} of a permutation $w$ of size $n$ is the permutation in $S_n$ obtained by replacing the $i^\text{th}$-smallest entry in $w$ with $i$ for all $i$. For example, $7(-6)46$ is a permutation of size $4$ whose standardization is $4123$. 

Let $\stack$ denote West's stack-sorting map. A simple recursive definition of $\stack$ is as follows. First, define $\stack(\varepsilon)=\varepsilon$, where $\varepsilon$ is the empty permutation. Given a nonempty permutation $w$ with largest entry $m$, we can write $w$ in one-line notation as $\mathsf{L}m\mathsf{R}$. Then define $\stack(w)$ to be $\stack(\mathsf{L})\stack(\mathsf{R})m$. For example, \[\stack(4723165)=\stack(4)\stack(23165)\,7=4\,\stack(231)\,\stack(5)\,67=4\,\stack(2)\,\stack(1)\,3567=4213567.\] 

\medskip

We say a permutation $w$ is \dfn{$t$-stack-sortable} if $\stack^t(w)$ is an increasing permutation. It follows from Knuth's analysis \cite{Knuth} that a permutation is $1$-stack-sortable if and only if it avoids the pattern $231$; Knuth also showed that the number of $231$-avoiding permutations in $S_n$ is the $n^\text{th}$ Catalan number $C_n=\frac{1}{n+1}\binom{2n}{n}$ (see \cite[Chapters~4~and~8]{Bona}). There has been a great deal of work dedicated to understanding $2$-stack-sortable permutations (see \cite{Bona, BonaSurvey, BonaBoca, Branden3, DefantTroupes, DefantCounting, Goulden, Zeilberger, Fang} and the references therein); the first major result proved about these permutations was Zeilberger's theorem, which states that the number of $2$-stack-sortable permutations in $S_n$ is $\frac{2}{(n+1)(2n+1)}\binom{3n}{n}$ \cite{Zeilberger}. \'Ulfarsson characterized $3$-stack-sortable permutations \cite{Ulfarsson}; only recently were these permutations enumerated via a complicated recurrence relation \cite{DefantCounting}. 

Consider the affine symmetric group $\widetilde S_n$, as defined in Section~\ref{SecAffine}. We will introduce a specific Coxeter stack-sorting operator $\widetilde\stack:\widetilde S_n\to\widetilde S_n$. Let us say an affine permutation $w\in\widetilde S_n$ is \dfn{$t$-stack-sortable} if $\widetilde\stack^t(w)=e$. In Section~\ref{SecAffine}, we will see that $1$-stack-sortable affine permutations are also characterized by the property of avoiding the pattern $231$. A theorem due to Crites \cite{Crites} states that the number of $231$-avoiding affine permutations in $\widetilde S_n$ is $\binom{2n-1}{n}$. We will enumerate $2$-stack-sortable affine permutations, thereby obtaining an affine analogue of Zeilberger's seminal formula. To be more precise, let us write $\widetilde{\mathcal W}_2(n)$ for the set of $2$-stack-sortable elements of $\widetilde S_n$, and let $I(q)=\sum_{n\geq 0}\frac{2}{(n+1)(2n+1)}\binom{3n}{n}q^n$ and $\widetilde I(q)=\sum_{n\geq 1}\lvert \widetilde{\mathcal W}_2(n)\rvert q^n$. We will prove that \[\widetilde I(q)=\frac{qI'(q)}{I(q)(I(q)-1)}-1.\] 

\medskip

Given a set $X$, a function $f:X\to X$, and an element $x\in X$, we define the \dfn{forward orbit} of $x$ under $f$ to be the set $O_f(x)=\{x,f(x),f^2(x),\ldots\}$. When confronted with such a dynamical system, it is natural to consider the sizes of the forward orbits, and it is particularly natural to ask for $\sup\limits_{x\in X}\lvert O_f(x)\rvert $.  Additional motivation for studying this quantity when $f$ is a sorting operator comes from the observation that it measures the worst-case complexity of the sorting procedure. West \cite{West} proved that $\max\limits_{w\in S_n}\lvert O_{\stack}(w)\rvert $ is $n$, which happens to be the Coxeter number of $S_n$. In Section~\ref{SecGeneral}, we invoke a theorem from \cite{DefantCoxeterPop} to see that if $W$ is an arbitrary finite irreducible Coxeter group and $w\in W$, then the size of the forward orbit of $w$ under a Coxeter stack-sorting operator is at most the Coxeter number of $W$. It follows that every forward orbit of the specific map $\stack_B:B_n\to B_n$ that we define in Section~\ref{SecTypeB} has size at most the Coxeter number of $B_n$, which is $2n$. However, we improve this estimate in Section~\ref{SecTypeB}, showing that \[\max_{w\in B_n}\left\lvert O_{\stack_B}(w)\right\rvert =n+1.\]

\medskip

If $w\in S_n$, then the number of right descents of $\stack(w)$ is at most $\left\lfloor\frac{n-1}{2}\right\rfloor$ (see \cite{DefantEngenMiller} and \cite[Chapter~8, Exercise~18]{Bona}). This result is natural if one thinks of the stack-sorting map as a genuine sorting procedure and views the number of right descents of a permutation in $S_n$ as a measure of how ``far away'' the permutation is from being sorted into the identity. In Section~\ref{SecDescents}, we prove that a similar result completely fails for arbitrary Coxeter stack-sorting operators on symmetric groups. Namely, we show that for each $n\geq 2$, there exist a permutation $w\in S_n$ and an essential semilattice congruence $\equiv$ on $(S_n,\leq_L)$ such that ${\bf S}_{\equiv}(w)$ has $n-2$ right descents. However, we will see that there \emph{is} a nontrivial upper bound on the number of right descents when we restrict our attention to \emph{lattice congruences}. We prove that if $\equiv$ is an essential lattice congruence on $(S_n,\leq_L)$, then every permutation in the image of ${\bf S}_{\equiv}$ has at most $\left\lfloor\frac{2(n-1)}{3}\right\rfloor$ right descents. Moreover, we will see that this bound is tight because it is attained when $\equiv$ is the descent congruence. In Section~\ref{SecTypeB}, we find that the maximum number of right descents that an element of $B_n$ in the image of $\stack_B$ can have is $\left\lfloor\frac{n}{2}\right\rfloor$. In Section~\ref{SecAffine}, we prove that the maximum number of right descents that an element of $\widetilde S_n$ in the image of $\widetilde\stack$ can have is also $\left\lfloor\frac{n}{2}\right\rfloor$.

\medskip

The number of preimages of a permutation $w\in S_n$ under the stack-sorting map is called the \dfn{fertility} of $w$. In \cite{DefantPolyurethane}, the author proved that the fertility of $w$ only depends on the sylvester class of $w$. An analogous result fails for arbitrary Coxeter stack-sorting operators, but we will see that it does hold for $\widetilde\stack$, the affine analogue of the stack-sorting map. Define the \dfn{fertility} of an element $w\in\widetilde S_n$ to be $\lvert \widetilde\stack^{-1}(w)\rvert $. We will see that the fertility of $w$ is finite and only depends on the affine sylvester class of $w$. (The affine sylvester congruence is the semilattice congruence used to define $\widetilde\stack$.) In fact, we will prove that for every $t\geq 0$, the number of preimages of $w$ under $\widetilde{\stack}^t$ is finite and only depends on the affine sylvester class of $w$.

In \cite{DefantEngenMiller}, Engen, Miller, and the present author defined a permutation to be \dfn{uniquely sorted} if its fertility is $1$. They proved the following characterization of these permutations. 
\begin{proposition}[\cite{DefantEngenMiller}]\label{PropUniquely}
Let $n\geq 1$. A permutation of size $n$ is uniquely sorted if and only if it is in the image of the stack-sorting map $\stack$ and has exactly $\frac{n-1}{2}$ descents. In particular, there are no uniquely sorted permutations of size $n$ if $n$ is even.
\end{proposition}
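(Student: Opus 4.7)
The plan is to exploit the bijection between $S_n$ and decreasing binary plane trees on $n$ vertices via in-order reading: under this bijection, $\stack(w)$ is exactly the post-order traversal of the tree corresponding to $w$, so the fertility of $\pi$ equals the number of such trees with post-order traversal $\pi$. In particular, $\pi$ lies in the image of $\stack$ only if $\pi_n = n$. Writing $\pi = \pi' n$ and decomposing the tree at its root $n$ yields the recursion
\[
\mathrm{fert}(\pi) = \sum_{(A,B)} \mathrm{fert}(\mathrm{std}(A)) \cdot \mathrm{fert}(\mathrm{std}(B)),
\]
where the sum is over factorizations $\pi' = AB$ in which each of $A$ and $B$ is either empty or ends in its own maximum entry. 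Each summand is a nonnegative integer.

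I would then prove both implications simultaneously by strong induction on $n$, the base case $n = 1$ being immediate. For the forward direction, assume $\mathrm{fert}(\pi) = 1$; exactly one factorization $(A^*, B^*)$ contributes, and it contributes $\mathrm{fert}(\mathrm{std}(A^*)) = \mathrm{fert}(\mathrm{std}(B^*)) = 1$. First I would rule out $A^* = \emptyset$ (and symmetrically $B^* = \emptyset$): if $A^* = \emptyset$, then $\pi' = B^*$ ends in its maximum, so the factorization $(\pi', \emptyset)$ is also valid and contributes positively, contradicting uniqueness. Hence both pieces are nonempty, and the inductive hypothesis forces $|A^*|$ and $|B^*|$ odd with $\des(\mathrm{std}(A^*)) = (|A^*|-1)/2$ and $\des(\mathrm{std}(B^*)) = (|B^*|-1)/2$, so $n$ is odd. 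Next, I would argue that there is a descent between the last entry of $A^*$ and the first entry of $B^*$ (otherwise shifting the split boundary by one position produces a competing valid factorization), which gives
\[
\des(\pi) = \des(A^*) + 1 + \des(B^*) = \tfrac{|A^*|-1}{2} + 1 + \tfrac{|B^*|-1}{2} = \tfrac{n-1}{2}.
\]
For the converse, the inductive upper bound $\des(\mathrm{std}(X)) \leq (|X|-1)/2$ for $\mathrm{std}(X)$ in the image (available from the inductive equivalence), combined with $\des(\pi) = (n-1)/2$, forces equality in the chain $\des(\pi) = \des(A) + \delta + \des(B)$ for every contributing factorization $(A,B)$; this pins down each piece to be uniquely sorted and the factorization to be unique, giving $\mathrm{fert}(\pi) = 1$.

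The main obstacle is the descent-at-boundary step in the forward direction: one must preclude the possibility that the last entry of $A^*$ is smaller than the first entry of $B^*$, which structurally permits an alternative valid factorization at position $|A^*| + 1$. Showing that this alternative actually contributes positively requires an auxiliary lemma about how truncating or extending a uniquely sorted permutation affects membership in the image of $\stack$. A route that sidesteps these subtleties is to invoke the valid hook configuration fertility formula
\[
\mathrm{fert}(\pi) = \sum_{\mathcal{H} \in \VHC(\pi)} \prod_{i=0}^{d} C_{q_i(\mathcal{H})}, \qquad \sum_{i=0}^{d} q_i(\mathcal{H}) = n - 1 - 2d,
\]
with $d = \des(\pi)$: then $\mathrm{fert}(\pi) = 1$ forces a unique $\mathcal{H}\in\VHC(\pi)$ with all $q_i(\mathcal{H}) \in \{0, 1\}$, and a short structural argument rules out any $q_i = 1$ under the uniqueness constraint, yielding $d = (n-1)/2$; the converse follows in parallel because $d = (n-1)/2$ forces all $q_i$ to vanish, collapsing every summand to $1$ and making the VHC unique.
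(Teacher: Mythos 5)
This proposition is imported verbatim from \cite{DefantEngenMiller}; the paper gives no proof of it, so there is nothing internal to compare against, and your proposal must stand on its own. It does not: both directions have genuine gaps. In the forward direction, the recursion $\mathrm{fert}(\pi)=\sum_{AB=\pi'}\mathrm{fert}(\mathrm{std}(A))\,\mathrm{fert}(\mathrm{std}(B))$ and the elimination of empty factors are fine, but the boundary-descent step is exactly the missing content, as you concede. Your proposed patch (shift the split point one position to the right) produces a contributing factorization only if $\mathrm{std}(B^{*-})$ lies in the image of $\stack$, where $B^{*-}$ is $B^*$ with its first entry deleted, and nothing in your induction gives this: if the first position of $B^*$ were not a descent, then $B^{*-}$ would have $\frac{\lvert B^*\rvert-1}{2}$ descents but length $\lvert B^*\rvert-1$, violating the bound \eqref{EqmaxDes}, so it would have fertility $0$ and the shifted factorization would contribute nothing. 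Ruling this out amounts to showing that uniquely sorted permutations of size at least $3$ begin with a descent, which is essentially the claim under proof. The converse has an unacknowledged gap of the same flavor: forcing equality in the descent chain shows that \emph{every} contributing factorization contributes exactly $1$, but it does not show there is only \emph{one} contributing factorization. Descent counting alone is consistent with two nested splits $(A_1,B_1)$ and $(A_1C,B_2)$ both contributing, so ``making the factorization unique'' requires a separate structural argument that you have not supplied.

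The valid hook configuration route you offer as a workaround contains a concrete arithmetic error and does not repair these gaps. For a permutation with $d$ descents, ${\bf q}^{\mathcal H}$ is a composition of $n-d$ into $d+1$ parts (the $n$ points minus the $d$ northeast endpoints), not of $n-1-2d$. With the correct count, $\mathrm{fert}(\pi)=1$ forces the unique $\mathcal H$ to have all $q_i\in\{0,1\}$, hence $n-d\leq d+1$, i.e., $d\geq\frac{n-1}{2}$; combined with \eqref{EqmaxDes} this yields $d=\frac{n-1}{2}$ with all $q_i$ equal to $1$, so the ``short structural argument ruling out any $q_i=1$'' is backwards as well as unnecessary. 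More seriously, the converse is not ``parallel'': $d=\frac{n-1}{2}$ gives $\sum_i q_i=\frac{n+1}{2}$ distributed over $\frac{n+1}{2}$ parts, which does not by itself force each $q_i\leq 1$ for an arbitrary $\mathcal H\in\VHC(\pi)$, and the uniqueness of the valid hook configuration is the real content of this direction. The paper's own Theorem~\ref{ThmCoxx4}, which proves the affine analogue of exactly this statement, spends two substantial paragraphs on precisely these two points (showing that the descent bottoms and the northeast endpoints partition the plot, then deriving a contradiction from two distinct configurations); asserting them does not close the argument.
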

Quite surprisingly, uniquely sorted permutations are counted by a fascinating sequence called \emph{Lassalle's sequence} (sequence A180874 in \cite{OEIS}), which is the sequence of absolute values of the classical cumulants of the standard semicircular distribution \cite{DefantEngenMiller}. Uniquely sorted permutations also possess several other remarkable enumerative properties \cite{DefantEngenMiller, DefantCatalan, Mularczyk, Singhal}. 

Let us say an affine permutation $w\in\widetilde S_n$ is \dfn{uniquely sorted} if its fertility is $1$. We prove in Section~\ref{SecAffine} that an element of $\widetilde S_n$ is uniquely sorted if and only if it is in the image of $\widetilde\stack$ and has exactly $\frac{n}{2}$ right descents. In particular, there are no uniquely sorted affine permutations in $\widetilde S_n$ when $n$ is odd. When $n\geq 2$ is even, there are infinitely many uniquely sorted elements of $\widetilde S_n$. Because we know (by Theorem~\ref{ThmCoxx2}) that the fertility of an element of $\widetilde S_n$ only depends on its affine sylvester class, it is natural to define an affine sylvester class in $\widetilde S_n$ to be \dfn{uniquely sorted} if its elements are uniquely sorted. We prove that the number of uniquely sorted affine sylvester classes in $\widetilde S_{2k}$ is \[3\binom{4k}{k}-2\sum_{i=0}^k\binom{4k}{i}.\] This appears to be the first combinatorial interpretation for these numbers, which form sequence A107026 in \cite{OEIS}. 

Much of the author's work on the stack-sorting map has relied on a certain Decomposition Lemma, which provides a recursive method for computing the fertility of a permutation, and a certain Fertility Formula, which gives an explicit expression for the fertility of a permutation as a sum over combinatorial objects called \emph{valid hook configurations}. These tools have led to several new results about the stack-sorting map, including the aforementioned recurrence for counting $3$-stack-sortable permutations \cite{DefantCounting}, theorems about uniquely sorted permutations \cite{DefantEngenMiller, DefantCatalan, Mularczyk, Singhal}, a very surprising and useful connection with cumulants in noncommutative probability theory \cite{DefantTroupes}, and connections with certain polytopes called \emph{nestohedra} \cite{DefantFertilitopes}. In Section~\ref{SecAffine}, we introduce \emph{affine valid hook configurations}, and we prove analogues of the Decomposition Lemma and the Fertility Formula for affine permutations. These tools are what allow us to enumerate uniquely sorted affine sylvester classes.

\section{Coxeter Groups}\label{SecCoxeter}

We assume basic familiarity with the combinatorics of Coxeter groups; a standard reference for the subject is \cite{BjornerBrenti}. We will often refer to a Coxeter group $W$ with the understanding that we are really referring to a Coxeter system $(W,S)$ for some specific set $S$ of simple generators. We let $e$ denote the identity element of $W$. Thus, $W$ has a presentation of the form $\langle S:(ss')^{m(s,s')}=e\rangle$ such that $m(s,s)=1$ for all $s\in S$ and $m(s,s')=m(s',s)\in\{2,3,\ldots\}\cup\{\infty\}$ for all distinct $s,s'\in S$. The \dfn{length} of an element $w\in W$, denoted $\ell(w)$, is the length of a reduced word for $w$. The \dfn{left weak order} on $W$ is the partial order $\leq_L$ on $W$ defined by saying $x\leq_L y$ if $\ell(yx^{-1})=\ell(y)-\ell(x)$. The \dfn{right weak order} on $W$ is the partial order $\leq_R$ on $W$ defined by saying $x\leq_R y$ if $\ell(x^{-1}y)=\ell(y)-\ell(x)$. The map $W\to W$ given by $w\mapsto w^{-1}$ is an isomorphism from the left weak order to the right weak order. A \dfn{right descent} of an element $w\in W$ is a simple generator $s\in S$ such that $\ell(ws)<\ell(w)$; the collection of all right descents of $w$ is the \dfn{right descent set} of $w$, which we denote by $D_R(w)$. Equivalently, $D_R(w)=\{s\in S:s\leq_L w\}$. A \dfn{Coxeter element} of a finite Coxeter group $W$ is an element obtained by multiplying the simple generators together in some order; all Coxeter elements have the same order in $W$, which is called the \dfn{Coxeter number} of $W$. A Coxeter group is called \dfn{irreducible} if its Coxeter diagram is a connected graph.  

The Coxeter groups of type $A$ are the symmetric groups $S_n$. The elements of $S_n$ are permutations of $[n]$, which we write as words in one-line notation. The simple generators of $S_n$ are $s_1,\ldots,s_{n-1}$, where $s_i=(i\,\,i+1)$ is the transposition that swaps $i$ and $i+1$. The transposition $s_i$ is a right descent of a permutation $w\in S_n$ if and only if $w(i)>w(i+1)$. A \dfn{right inversion} (respectively, \dfn{left inversion}) of $w$ is a pair $(i,j)$ such that $1\leq i<j\leq n$ and $w(i)>w(j)$ (respectively, $w^{-1}(i)>w^{-1}(j)$). For $v,w\in S_n$, we have $v\leq_L w$ (respectively, $v\leq_R w$) if and only if every right (respectively, left) inversion of $v$ is a right (respectively, left) inversion of $w$. The number of right (equivalently, left) inversions of $w$ is $\ell(w)$. 

Recall that, in this article, a permutation of size $n$ is a bijection $w:[n]\to X$, where $X\subseteq\mathbb Z$ has cardinality $n$. The one-line notation of $w$ is the word $w(1)\cdots w(n)$. The \dfn{plot} of $w$ is the diagram showing the points $(i,w(i))\in\mathbb R^2$ for all $i\in[n]$. It is often convenient to consider plots modulo horizontal translation. In other words, moving all points in the plot of $w$ to the right or left by some fixed distance gives another diagram that we still call the \emph{plot} of $w$. We write $\Des(w)$ for the set of indices $i\in [n-1]$ such that $w(i)>w(i+1)$; such indices are called the \dfn{descents} of $w$. We have $i\in\Des(w)$ if and only if $s_i\in D_R(w)$. 

We will often make use of the automorphism $\alpha$ of the group $S_n$ defined by $\alpha(w)=w_0ww_0$, where $w_0=n(n-1)\cdots 321$ is the longest element of $S_n$. The one-line notation of $\alpha(w)$ is \[(n+1-w(n))(n+1-w(n-1))\cdots (n+1-w(1)).\] The plot of $\alpha(w)$ is obtained by rotating the plot of $w$ by $180^\circ$. It is well known that $\alpha$ is a lattice automorphism of the left (and also the right) weak order on $S_n$. Therefore, given a semilattice congruence $\equiv$ on $(S_n,\leq_L)$, we can consider the semilattice congruence $\alpha(\equiv)$ defined by saying $v\,\alpha(\equiv)\,w$ if and only if $\alpha(v)\equiv \alpha(w)$. For all $w\in S_n$, we have 
\begin{equation}\label{EqCoxx1}
\pi_\downarrow^{\alpha(\equiv)}(\alpha(w))=\alpha(\pi_\downarrow^\equiv(w))\quad\text{and}\quad{\bf S}_{\alpha(\equiv)}(\alpha(w))=\alpha({\bf S}_\equiv(w)). 
\end{equation}

\section{General Results}\label{SecGeneral}
In this section, we collect some general facts about arbitrary Coxeter stack-sorting operators. 

\medskip

We first address the problem of determining the maximum size of a forward orbit of a Coxeter stack-sorting operator ${\bf S}_\equiv:W\to W$ when $W$ is a finite irreducible Coxeter group. Recall from Remark~\ref{RemCox1} that our definition of a Coxeter stack-sorting operator guarantees that every forward orbit of ${\bf S}_\equiv$ contains $e$, which is a fixed point. Thus, we will be concerned with the maximum number of iterations of ${\bf S}_\equiv$ needed to send an element to the identity. 

Let $W$ be an arbitrary Coxeter group. In \cite{DefantCoxeterPop}, the author defined a map $f:W\to W$ to be \dfn{compulsive} if $f(w)\leq_R w$ and $f(w)\leq_R ws$ for all $w\in W$ and all $s\in D_R(w)$. He proved that if $W$ is finite and irreducible and $f$ is compulsive, then $\sup\limits_{w\in W}\lvert O_f(w)\rvert \leq h$, where $h$ is the Coxeter number of $W$. We will see that Coxeter stack-sorting operators are compulsive. First, we need a simple lemma that gives an alternative characterization of essential semilattice congruences. Recall that an equivalence relation $\equiv$ is said to \dfn{refine} an equivalence relation $\equiv'$ if every equivalence class of $\equiv$ is contained in an equivalence class of $\equiv'$.  

\begin{lemma}\label{LemEssential}
A semilattice congruence $\equiv$ on the left weak order of a Coxeter group $W$ is essential if and only if it refines the descent congruence $\equiv_{\des}$ on $W$.
\end{lemma}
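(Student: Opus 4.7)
The plan is to exploit the characterization $D_R(w) = \{s \in S : s \leq_L w\}$ together with the fact that, for any simple generator $s$, the interval $[e,s]$ in the left weak order is just $\{e,s\}$. Consequently, for any $w \in W$ and any $s \in S$, the meet $s \wedge w$ is either $s$ (when $s \in D_R(w)$) or $e$ (otherwise).

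For the backward direction ($\Leftarrow$), assume $\equiv$ refines $\equiv_{\des}$. If $w \equiv e$, then $D_R(w) = D_R(e) = \emptyset$, and the only element of $W$ with empty right descent set is $e$ itself. Hence $\{e\}$ is a singleton class and $\equiv$ is essential. This direction is immediate.

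For the forward direction ($\Rightarrow$), assume $\equiv$ is essential and suppose $x \equiv y$ for some $x, y \in W$. Fix any simple generator $s \in S$. Since $s \equiv s$ trivially and $\equiv$ respects meets, we deduce
\[
(s \wedge x) \equiv (s \wedge y).
\]
By the observation above, $s \wedge x \in \{e, s\}$ with $s \wedge x = s$ iff $s \in D_R(x)$, and similarly for $y$. If $s \in D_R(x) \setminus D_R(y)$, then $s \equiv e$; since $\equiv$ is essential, this forces $s = e$, a contradiction. A symmetric argument rules out $s \in D_R(y) \setminus D_R(x)$. Therefore $D_R(x) = D_R(y)$, which means $x \equiv_{\des} y$, so $\equiv$ refines $\equiv_{\des}$.

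Neither direction looks like it will present a real obstacle: the whole argument rests on the single observation that simple generators are atoms of the left weak order, so their meets with arbitrary elements take only two possible values. The cleanest presentation will simply unwind the definitions in the order above.
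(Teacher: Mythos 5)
Your proof is correct and follows essentially the same route as the paper's: both directions rest on the observation that $s\wedge w\in\{e,s\}$ for a simple generator $s$, with $s\wedge w=s$ exactly when $s\in D_R(w)$, and essentiality rules out $s\equiv e$. No issues.
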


\begin{proof}
Every refinement of the descent congruence is certainly essential because $e$ is the only element of $W$ with an empty right descent set. For the converse, suppose $\equiv$ is an essential semilattice congruence on the left weak order of $W$. Let $S$ be the set of simple generators of $W$. Consider $v,w\in W$ such that $v\equiv w$. Let $s\in S$. Because $\equiv$ is a semilattice congruence, we have $v\wedge s\equiv w\wedge s$. Both $v\wedge s$ and $w\wedge s$ are less than or equal to $s$ in the left weak order, so they must belong to $\{e,s\}$. Since $e\not\equiv s$, we deduce that $v\wedge s=w\wedge s$. Therefore, $v\geq_L s$ if and only if $w\geq_L s$. In other words, $s\in D_R(v)$ if and only if $s\in D_R(w)$. As this is true for all $s\in S$, we conclude that $D_R(v)=D_R(w)$, meaning $v\equiv_{\des} w$. 
\end{proof}

\begin{proposition}\label{PropCox1}
Let $\equiv$ be an essential semilattice congruence on the left weak order of a Coxeter group $W$. The Coxeter stack-sorting operator ${\bf S}_\equiv$ is compulsive. If $W$ is finite and irreducible and has Coxeter number $h$, then \[\max_{w\in W}\left\lvert O_{{\bf S}_\equiv}(w)\right\rvert \leq h.\]
\end{proposition}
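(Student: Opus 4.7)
The plan is to reduce the statement to the compulsive-map theorem from \cite{DefantCoxeterPop}: once we know ${\bf S}_\equiv$ is compulsive, the bound $\max_w |O_{{\bf S}_\equiv}(w)| \leq h$ is immediate from that theorem. So the task is to verify the two defining conditions of a compulsive map, namely ${\bf S}_\equiv(w)\leq_R w$ and ${\bf S}_\equiv(w)\leq_R ws$ for every $w\in W$ and every $s\in D_R(w)$.

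Write $\pi := \pi_\downarrow^\equiv(w)$, so that ${\bf S}_\equiv(w) = w\pi^{-1}$. Two structural facts should be recorded first. (i) $\pi \leq_L w$: applying the semilattice property to $\pi \equiv w$ and $w\equiv w$ gives $\pi\wedge w\equiv w$, so $\pi\wedge w$ lies in the congruence class of $\pi$; but $\pi$ is the minimum of that class and $\pi\wedge w\leq_L \pi$, forcing $\pi\wedge w=\pi$. (ii) $D_R(\pi)=D_R(w)$: this is Lemma~\ref{LemEssential} applied to $\pi\equiv w$.

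With these in hand, both compulsive conditions reduce to short length computations. From (i), $\ell(w\pi^{-1}) = \ell(w)-\ell(\pi)$, and since $({\bf S}_\equiv(w))^{-1}w = \pi$, we get $\ell(({\bf S}_\equiv(w))^{-1}w) = \ell(\pi) = \ell(w)-\ell({\bf S}_\equiv(w))$, which is exactly ${\bf S}_\equiv(w)\leq_R w$. For the second condition, fix $s\in D_R(w)$. By (ii), $s\in D_R(\pi)$, so $\ell(\pi s)=\ell(\pi)-1$; since $({\bf S}_\equiv(w))^{-1}(ws) = \pi s$,
\[
\ell\bigl(({\bf S}_\equiv(w))^{-1}(ws)\bigr) = \ell(\pi)-1 = (\ell(w)-1)-(\ell(w)-\ell(\pi)) = \ell(ws)-\ell({\bf S}_\equiv(w)),
\]
which is exactly ${\bf S}_\equiv(w)\leq_R ws$. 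Invoking the compulsive-map theorem of \cite{DefantCoxeterPop} then yields the advertised bound by the Coxeter number $h$.

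I anticipate no serious obstacle. The one step that deserves real care is (i), since it is here — and only here — that the meet-semilattice structure is genuinely used; without it, the minimum $\pi$ of the congruence class need not lie weakly below $w$ in the weak order, and the length identity $\ell(w\pi^{-1})=\ell(w)-\ell(\pi)$ on which everything else hinges could fail.
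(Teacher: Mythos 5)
Your proof is correct and follows essentially the same route as the paper: use Lemma~\ref{LemEssential} to transfer right descents from $w$ to $\pi_\downarrow^\equiv(w)$, verify the two compulsive conditions by the length computation $\ell\bigl(({\bf S}_\equiv(w))^{-1}wx\bigr)=\ell(\pi_\downarrow^\equiv(w)x)=\ell(wx)-\ell({\bf S}_\equiv(w))$, and then invoke the compulsive-map theorem of \cite{DefantCoxeterPop}. The only cosmetic differences are that you treat the cases $x=e$ and $x=s$ separately where the paper handles them uniformly, and you spell out the (correct) justification that $\pi_\downarrow^\equiv(w)\leq_L w$, which the paper takes as given.
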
 

\begin{proof}
To prove that ${\bf S}_\equiv$ is compulsive, we must prove that ${\bf S}_\equiv(w)\leq_R wx$ for every $w\in W$ and $x\in D_R(w)\cup\{e\}$. Fix such elements $w$ and $x$. Let $\pi_\downarrow$ be the downward projection map associated to $\equiv$. Because $w\equiv\pi_\downarrow(w)$, it follows from Lemma~\ref{LemEssential} that $D_R(w)=D_R(\pi_\downarrow(w))$. Since $x\in D_R(w)\cup\{e\}=D_R(\pi_\downarrow(w))\cup\{e\}$, we have $\ell(wx)=\ell(w)-\ell(x)$ and $\ell(\pi_\downarrow(w)x)=\ell(\pi_\downarrow(w))-\ell(x)$. We know that $\pi_\downarrow(w)\leq_L w$ and ${\bf S}_\equiv(w)=w(\pi_\downarrow(w))^{-1}$, so $\ell({\bf S}_\equiv(w))+\ell(\pi_\downarrow(w))=\ell(w)$. Hence, \[\ell\left(({\bf S}_\equiv(w))^{-1}wx\right)=\ell(\pi_\downarrow(w)x)=\ell(\pi_\downarrow(w))-\ell(x)=\ell(w)-\ell(x)-\ell({\bf S_\equiv}(w))=\ell(wx)-\ell({\bf S_\equiv}(w)).\] This shows that ${\bf S}_\equiv(w)\leq_R wx$, completing the proof that ${\bf S}_\equiv$ is compulsive. The last statement in the theorem follows from Theorem~1.4 in \cite{DefantCoxeterPop}. 
\end{proof}

The Coxeter number of $S_n$ is $n$. Therefore, Proposition~\ref{PropCox1} states that if ${\bf S}_\equiv$ is a Coxeter stack-sorting operator on $S_n$, then ${\bf S}_\equiv^{n-1}(w)=e$ for all $w\in S_n$. 

\medskip

In \cite[Chapter~8, Exercise~23]{Bona}, B\'ona asks the reader to find the permutation in $S_n$ that has the maximum number of preimages under the stack-sorting map. As one might expect, the answer is the identity permutation $e=123\cdots n$. The number of preimages of $123\cdots n$ under the stack-sorting map is the $n^\text{th}$ Catalan number $C_n=\frac{1}{n+1}\binom{2n}{n}$, and every other permutation in $S_n$ has strictly fewer than $C_n$ preimages. This result is a special case of the following proposition because the number of congruence classes of the sylvester congruence on $S_n$ is the number of binary plane trees with $n$ vertices, which is $C_n$.

\begin{proposition}
Let $\equiv$ be an essential semilattice congruence on the left weak order of a Coxeter group $W$. Let $K$ be the number of congruence classes of $\equiv$, and assume $K$ is finite. Then $\lvert {\bf S}_\equiv^{-1}(e)\rvert =K$, and $\lvert {\bf S}_\equiv^{-1}(v)\rvert <K$ for every $v\in W\setminus\{e\}$.
\end{proposition}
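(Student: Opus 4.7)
\smallskip

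The plan is to reduce both claims to counting \emph{sortable elements}, i.e.\ elements $u\in W$ satisfying $\pi_\downarrow^\equiv(u)=u$. By definition of $\pi_\downarrow^\equiv$, the map sending a congruence class to its unique minimum gives a bijection between the set of congruence classes and the set of sortable elements, so there are exactly $K$ sortable elements in $W$.

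First I would handle $v=e$. The equation ${\bf S}_\equiv(w)=e$ unwinds to $w(\pi_\downarrow(w))^{-1}=e$, which is equivalent to $w=\pi_\downarrow(w)$, i.e.\ to $w$ being sortable. This immediately gives $\lvert{\bf S}_\equiv^{-1}(e)\rvert=K$.

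For the strict inequality when $v\neq e$, I would set up an injection $\phi:{\bf S}_\equiv^{-1}(v)\to\{u\in W:u\text{ sortable}\}$ by $\phi(w)=\pi_\downarrow(w)$. This is well-defined (the target of $\pi_\downarrow$ is always sortable since $\pi_\downarrow^2=\pi_\downarrow$), and it is injective because if $w\in{\bf S}_\equiv^{-1}(v)$ then $w=v\cdot\pi_\downarrow(w)$, so $w$ is determined by $v$ and $\phi(w)$. Since the target has size $K$, this already yields $\lvert{\bf S}_\equiv^{-1}(v)\rvert\leq K$. To make the inequality strict, I would exhibit a sortable element not in the image of $\phi$: the natural candidate is $e$ itself, which is sortable because essentiality of $\equiv$ forces the congruence class of $e$ to be $\{e\}$. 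If $\phi(w)=e$ for some $w\in{\bf S}_\equiv^{-1}(v)$, then $\pi_\downarrow(w)=e$, so $w\equiv e$, so $w=e$ (again by essentiality), so $v={\bf S}_\equiv(e)=e$, contradicting $v\neq e$. Hence the image of $\phi$ misses $e$, yielding $\lvert{\bf S}_\equiv^{-1}(v)\rvert\leq K-1<K$.

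There is no real obstacle here; the proof is essentially a two-line unpacking of the definitions once one notices that (a) the fiber over $v$ injects into the set of sortable elements via $\pi_\downarrow$, and (b) essentiality is precisely what guarantees that the sortable element $e$ lies outside the image of this injection whenever $v\neq e$. The role of essentiality in Definition~\ref{DefCoxStackOp} and Remark~\ref{RemCox1} is exactly what is being used in the last step.
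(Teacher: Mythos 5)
Your proof is correct and is essentially the same as the paper's: the paper also identifies ${\bf S}_\equiv^{-1}(e)$ with the set of class minima and, for $v\neq e$, uses the injection $w\mapsto v^{-1}w=\pi_\downarrow(w)$ into ${\bf S}_\equiv^{-1}(e)\setminus\{e\}$, with essentiality ruling out $e$ from the image exactly as you argue.
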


\begin{proof}
The elements of ${\bf S}_\equiv^{-1}(e)$ are precisely the minimal elements of the congruence classes, so $\lvert {\bf S}_\equiv^{-1}(e)\rvert =K$. Now consider $v\in W\setminus\{e\}$. If $w\in{\bf S}_\equiv^{-1}(v)$, then $v^{-1}w=v^{-1}{\bf S}_\equiv(w)\pi_\downarrow(w)=\pi_\downarrow(w)\in{\bf S}_\equiv^{-1}(e)$, and we know that $\pi_\downarrow(w)\neq e$ because $e$ is in a singleton congruence class. It follows that the map $w\mapsto v^{-1}w$ is an injection from ${\bf S}_\equiv^{-1}(v)$ into ${\bf S}_\equiv^{-1}(e)\setminus\{e\}$, so $\lvert {\bf S}_\equiv^{-1}(v)\rvert \leq K-1$. 
\end{proof}

One of the main results of \cite{DefantMonotonicity} states that $\lvert \stack^{-1}(w)\rvert \leq\lvert\stack^{-1}(\stack(w))\rvert$ for all $w\in S_n$, with equality if and only if $w=e$. In other words, this theorem says that the action of the stack-sorting map causes fertilities of permutations to increase monotonically. The analogous result fails for an arbitrary Coxeter stack-sorting operator, even if we restrict our attention to $S_n$. For example, if ${\bf S}_{\equiv_{\des}}$ is the pop-stack-sorting map on $S_5$, then ${\bf S}_{\equiv_{\des}} (24135)=21435$, but $\lvert {\bf S}_{\equiv_{\des}}^{-1}(24135)\rvert =3>2=\lvert {\bf S}_{\equiv_{\des}}^{-1}(21435)\rvert $. Nonetheless, there is still a certain monotonicity result that holds for all Coxeter stack-sorting operators. 

\begin{theorem}\label{ThmWeakMono}
Let $W$ be a Coxeter group, and let ${\bf S}_\equiv:W\to W$ be a Coxeter stack-sorting operator. If $v,w\in W$ are such that $v\leq_L w$, then $\lvert {\bf S}_\equiv^{-1}(v)\rvert \geq\lvert {\bf S}_\equiv^{-1}(w)\rvert $. 
\end{theorem}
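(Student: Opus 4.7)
The plan is to build an explicit injection ${\bf S}_\equiv^{-1}(w) \hookrightarrow {\bf S}_\equiv^{-1}(v)$. Write $u = wv^{-1}$, so that $v \leq_L w$ translates into the factorization $w = uv$ with $\ell(w) = \ell(u) + \ell(v)$. Given any $w' \in {\bf S}_\equiv^{-1}(w)$, we have $w' = w\,\pi_\downarrow(w')$ with $\ell(w') = \ell(w) + \ell(\pi_\downarrow(w'))$, so $w' = u v \pi_\downarrow(w')$ with all three lengths adding. The candidate image element is
\[ v' \;:=\; v\,\pi_\downarrow(w'). \]

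The first step is to verify the length identity $\ell(v') = \ell(v) + \ell(\pi_\downarrow(w'))$. This follows by pinching between subadditivity $\ell(v') \leq \ell(v) + \ell(\pi_\downarrow(w'))$ and the reverse inequality obtained from $\ell(w') = \ell(u v') \leq \ell(u) + \ell(v')$ combined with $\ell(w') = \ell(u) + \ell(v) + \ell(\pi_\downarrow(w'))$. Consequently $v \leq_L v'$ and $v' \leq_L w'$; moreover $\pi_\downarrow(w') \leq_L v'$, because $v'(\pi_\downarrow(w'))^{-1} = v$ and $\ell(v) = \ell(v') - \ell(\pi_\downarrow(w'))$.

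The key step is to argue that $v' \equiv \pi_\downarrow(w')$, so that $\pi_\downarrow(v') = \pi_\downarrow(w')$ and hence ${\bf S}_\equiv(v') = v'(\pi_\downarrow(w'))^{-1} = v$. The tool is the order-convexity of the congruence classes of a semilattice congruence on a meet-semilattice: if $a \leq b \leq c$ and $a \equiv c$, then $b = b \wedge c \equiv b \wedge a = a$. Applying this with $a = \pi_\downarrow(w') \leq_L v' \leq_L w' = c$ (where $a \equiv c$ by construction) yields $v' \equiv \pi_\downarrow(w')$. Since $\pi_\downarrow(w')$ is the minimum of its congruence class and $v'$ is in that class, $\pi_\downarrow(v') = \pi_\downarrow(w')$, giving ${\bf S}_\equiv(v') = v$.

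Finally, the assignment $w' \mapsto v'$ is injective: if $v' = v\,\pi_\downarrow(w'_1) = v\,\pi_\downarrow(w'_2)$, then $\pi_\downarrow(w'_1) = \pi_\downarrow(w'_2)$, so $w'_1 = w\,\pi_\downarrow(w'_1) = w\,\pi_\downarrow(w'_2) = w'_2$. This produces an injection ${\bf S}_\equiv^{-1}(w) \hookrightarrow {\bf S}_\equiv^{-1}(v)$, yielding the desired inequality. The main obstacle I anticipate is cleanly establishing the order-convexity of congruence classes and verifying it is applicable in the complete meet-semilattice setting of $(W, \leq_L)$; everything else is bookkeeping with lengths.
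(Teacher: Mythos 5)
Your proof is correct and is essentially the paper's argument: the paper first reduces to a cover relation $w=sv$ and then sends $u\in{\bf S}_\equiv^{-1}(w)$ to $su=v\,\pi_\downarrow(u)$, which is exactly your $v'$, establishing $\pi_\downarrow(u)\leq_L su\leq_L u$ and invoking the meet-respecting property (i.e.\ convexity of congruence classes) just as you do; your version merely handles general $v\leq_L w$ in one step via the length-pinching computation instead of inducting along a saturated chain. One harmless slip: your claim that $v\leq_L v'$ is false in general (from $v'=v\,\pi_\downarrow(w')$ with lengths adding one gets $v\leq_R v'$, not $v\leq_L v'$), but this relation is never used --- the chain $\pi_\downarrow(w')\leq_L v'\leq_L w'$ is all the convexity argument needs, and order-convexity of congruence classes is immediate from the definition of a semilattice congruence, as the paper itself records.
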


\begin{proof}
It suffices to prove the result when $\ell(w)=\ell(v)+1$. In this case, we can write $w=sv$ for some simple generator $s\in S$. Choose $u\in{\bf S}_\equiv^{-1}(w)$. We can write $u={\bf S}_\equiv(u)\pi_\downarrow(u)=w\pi_\downarrow(u)=sv\pi_\downarrow(u)$, where $\ell(u)=1+\ell(v)+\ell(\pi_\downarrow(u))$ because $\pi_\downarrow(u)\leq_L u$. Then $\pi_\downarrow(u)\leq_Lv\pi_\downarrow(u)=su\leq_L u$. Because $\equiv$ is a semilattice congruence and $u\equiv\pi_\downarrow(u)$, we have $su=u\wedge su\equiv\pi_\downarrow(u)\wedge su=\pi_\downarrow(u)$. Consequently, $\pi_\downarrow(su)=\pi_\downarrow(u)$. If follows that \[{\bf S}_\equiv(su)=su(\pi_\downarrow(su))^{-1}=su(\pi_\downarrow(u))^{-1}=s{\bf S}_\equiv(u)=sw=v.\] Hence, we have an injection ${\bf S}_\equiv^{-1}(w)\to{\bf S}_\equiv^{-1}(v)$ given by $u\mapsto su$.   
\end{proof}

\section{Descents After Coxeter Stack-Sorting in Symmetric Groups}\label{SecDescents}
Recall that we write $\stack$ for West's stack-sorting map, which coincides with the Coxeter stack-sorting operator ${\bf S}_{\equiv_{\syl}}$ defined using the sylvester congruence. 

In this section, we analyze Coxeter stack-sorting operators on symmetric groups. It is natural to use the number of right descents of a permutation $w\in S_n$ as a measure of how ``far away'' $w$ is from the identity permutation. Given that a Coxeter stack-sorting operator on $S_n$ is called a \emph{sorting operator}, we might hope that the permutations in its image would be somewhat ``close'' to the identity. For example, 
\begin{equation}\label{EqmaxDes}
\max\limits_{w\in S_n}\lvert D_R(\stack(w))\rvert =\left\lfloor\frac{n-1}{2}\right\rfloor
\end{equation} for all $n\geq 1$ (see \cite{DefantEngenMiller} and \cite[Chapter~8, Exercise~18]{Bona}). Is something similar true for other Coxeter stack-sorting operators? 

If we allow $\equiv$ to be an arbitrary essential semilattice congruence on the left weak order of $S_n$, then $\max\limits_{w\in S_n}\lvert D_R({\bf S}_\equiv(w))\rvert $ can be as large as $n-2$, as we will see below in Theorem~\ref{ThmCoxx6}. However, it turns out that if we require $\equiv$ to be a \emph{lattice congruence}, then the inequality $\max\limits_{w\in S_n}\lvert D_R({\bf S}_\equiv(w))\rvert \leq\left\lfloor\frac{2(n-1)}{3}\right\rfloor$ holds. We will also see that this bound is tight. 

Given a poset $Q$ and elements $u,v\in Q$ with $u\leq v$, we write $[u,v]$ for $\{q\in Q:u\leq q\leq v\}$, the closed interval in $Q$ between $u$ and $v$. A meet-semilattice is called \dfn{locally finite} if every one of its closed intervals is finite. 

\begin{lemma}\label{LemGeneratingCongruence}
Let $M$ be a locally finite meet-semilattice with a minimal element $\widehat 0$, and let $u,v\in M$ be such that $u\leq v$. There exists a semilattice congruence on $M$ that has $[u,v]$ as a congruence class. 
\end{lemma}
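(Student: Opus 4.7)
The plan is to exhibit the desired congruence explicitly via a downward projection map. Define $\pi : M \to M$ by
\[
\pi(x) = \begin{cases} x \wedge u & \text{if } x \not\geq u, \\ u & \text{if } u \leq x \leq v, \\ x & \text{if } x \geq u \text{ and } x \not\leq v, \end{cases}
\]
and let $\equiv$ be the equivalence relation on $M$ defined by $x \equiv y$ if and only if $\pi(x) = \pi(y)$. The intuition is to collapse the interval $[u,v]$ onto its minimum $u$ while sending every element $x$ with $x \not\geq u$ down to $x \wedge u$ (which is strictly below $u$), so that nothing outside $[u,v]$ is merged into the class of $u$.

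The first step is to confirm that $\pi^{-1}(u) = [u,v]$, which is immediate from the definition: the first clause can never output $u$ (that would force $x \geq u$), the second outputs $u$ precisely on $[u,v]$, and the third outputs $u$ only at $x = u$, already in $[u,v]$. Thus $[u,v]$ is exactly one of the equivalence classes of $\equiv$. A quick case check also shows that $\pi$ is idempotent.

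The main step is to show that $\equiv$ respects meets. Using idempotence of $\pi$, this reduces to proving the identity
\[
\pi(x \wedge y) = \pi(\pi(x) \wedge \pi(y)) \qquad \text{for all } x, y \in M,
\]
because if this identity holds and $x_i \equiv y_i$ for $i = 1, 2$, then $\pi(x_1 \wedge x_2) = \pi(\pi(x_1) \wedge \pi(x_2)) = \pi(\pi(y_1) \wedge \pi(y_2)) = \pi(y_1 \wedge y_2)$, so $x_1 \wedge x_2 \equiv y_1 \wedge y_2$. I would establish the identity by case analysis according to which of the three clauses applies to $x$ and to $y$, using two elementary observations: (i) if $x \not\geq u$ then $x \wedge z \not\geq u$ for every $z \in M$; and (ii) if $x \in [u,v]$ and $y \geq u$, then $u \leq x \wedge y \leq v$, so $x \wedge y \in [u,v]$.

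The main obstacle is choosing the right definition of $\pi$ outside $[u,v]$. The naive map that collapses $[u,v]$ and fixes everything else is \emph{not} a congruence in general: for $x \in [u,v]$ and $y \not\geq u$, the meets $x \wedge y$ and $u \wedge y$ can land at different elements of $M \setminus [u,v]$, violating the congruence property. Pushing every $y$ with $y \not\geq u$ preemptively down to $y \wedge u$ removes the conflict, and once this design choice is made, the case analysis for the identity above is routine, completing the construction.
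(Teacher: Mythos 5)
Your construction is correct, and it takes a genuinely different route from the paper. The paper generates the congruence by iterated closure: it starts from the relation whose only non-singleton class is $[u,v]$, repeatedly adjoins all meets of related pairs and takes transitive closures, shows by induction that each stage keeps $[u,v]$ as a class and keeps every $w\not\leq v$ in a singleton (the ``cheerful'' invariant), and uses local finiteness of $[\widehat 0,v]$ to guarantee the process stabilizes at a congruence. You instead write down an explicit idempotent projection $\pi$ and take its fibers as the classes; I checked that your identity $\pi(x\wedge y)=\pi(\pi(x)\wedge\pi(y))$ does hold in every case of the three-by-three analysis (the only cases requiring any thought are those mixing the clause $x\not\geq u$ with $y\geq u$, where your observation that $y\geq u$ forces $(x\wedge u)\wedge y=x\wedge u=(x\wedge y)\wedge u$ closes the gap, and the case $x\in[u,v]$, $y\geq u$, $y\not\leq v$, where your observation (ii) gives $\pi(x\wedge y)=u=\pi(u\wedge y)$), and that $\pi^{-1}(u)=[u,v]$ exactly as you argue. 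Your diagnosis of why the naive ``collapse $[u,v]$ and fix everything else'' relation fails is also accurate. What your approach buys: it is shorter, it describes all congruence classes explicitly (the interval $[u,v]$, the fibers $\{x:x\wedge u=w\}$ for $w<u$, and singletons above $u$ but not below $v$), and it uses neither the local finiteness hypothesis nor the minimal element $\widehat 0$, so it actually proves the statement for an arbitrary meet-semilattice. What the paper's approach buys is that it produces the congruence \emph{generated} by collapsing $[u,v]$ (the finest one doing so), which is a natural object even though the lemma does not require it.
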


\begin{proof}
Let us say an equivalence relation $\sim$ on $M$ is \dfn{cheerful} if the following two conditions hold: 
\begin{itemize}
\item $[u,v]$ is an equivalence class of $\sim$;
\item if $w\not\leq v$, then $w$ is in a singleton equivalence class of $\sim$.
\end{itemize} We are going to construct a sequence $\sim_1,\sim_2,\ldots$ of cheerful equivalence relations on $W$. First, let $\sim_1$ be the equivalence relation whose equivalence classes are $[u,v]$ and the singleton sets $\{w\}$ for $w\in M\setminus [u,v]$; it is clear that $\sim_1$ is cheerful. 

Now let $j\geq 1$, and suppose we have already constructed the cheerful equivalence relation $\sim_j$. Define the relation $\sim_j'$ by saying $z_1\sim_j' z_2$ if and only if either $z_1\sim_j z_2$ or there exist $x_1,x_2,y_1,y_2\in M$ with $x_1\sim_j x_2$, $y_1\sim_j y_2$, $x_1\wedge y_1=z_1$, and $x_2\wedge y_2=z_2$. Let $\sim_{j+1}$ be the transitive closure of $\sim_j'$. To see that $\sim_{j+1}$ is cheerful, we first need to show that $[u,v]$ is an equivalence class of $\sim_{j+1}$. Notice that $[u,v]$ is certainly contained in an equivalence class of $\sim_{j+1}$ because it is an equivalence class of $\sim_j$. Now suppose, by way of contradiction, that $[u,v]$ is properly contained in an equivalence class of $\sim_{j+1}$. Then there exist $z_1\in M\setminus[u,v]$ and $z_2\in[u,v]$ such that $z_1\sim_j' z_2$. We know that $z_1\not\sim_j z_2$ because $\sim_j$ is cheerful, so there must exist $x_1,x_2,y_1,y_2\in M$ with $x_1\sim_j x_2$, $y_1\sim_j y_2$, $x_1\wedge y_1=z_1$, and $x_2\wedge y_2=z_2$. Observe that $x_2,y_2\geq z_2\geq u$. If $x_1\neq x_2$, then $x_2\leq v$ because $\sim_j$ is cheerful. In this case, $x_2\in[u,v]$, so $x_1\in[u,v]$, again because $\sim_j$ is cheerful. Similarly, if $y_1\neq y_2$, then $y_1,y_2\in [u,v]$. Since $x_1\wedge y_1=z_1\not\in[u,v]$, the elements $x_1$ and $y_1$ cannot both belong to $[u,v]$. Thus, either $x_1=x_2$ or $y_1=y_2$. Without loss of generality, assume $x_1=x_2$. We know that $y_1\neq y_2$ because $z_1\neq z_2$. Consequently, $y_1,y_2\in [u,v]$. We find that $x_1=x_2\geq u$ and $y_1\geq u$, so $z_1\geq u$. However, $z_1\leq y_1\leq v$, and this contradicts the assumption that $z_1\not\in [u,v]$. Hence, $[u,v]$ is an equivalence class of $\sim_{j+1}$. 

Consider $w\in M$ with $w\not\leq v$. Suppose, by way of contradiction, that $\{w\}$ is not an equivalence class of $\sim_{j+1}$. Then $w\sim_j' z$ for some $z\neq w$. We know that $w\not\sim_j z$ because $\sim_j$ is cheerful. Therefore, there exist $x_1',x_2',y_1',y_2'\in M$ with $x_1'\sim_j x_2'$, $y_1'\sim_j y_2'$, $x_1'\wedge y_1'=w$, and $x_2'\wedge y_2'=z$. Because $w\not\leq v$, we have $x_1'\not\leq v$ and $y_1'\not\leq v$. Since $\sim_j$ is cheerful, we must have $x_1'=x_2'$ and $y_1'=y_2'$, so $w=z$. This is a contradiction, so we deduce that $\sim_{j+1}$ is cheerful. 

By induction, all of the equivalence relations $\sim_1,\sim_2,\ldots$ are cheerful. The fact that $M$ is locally finite (in particular, $[\widehat 0,v]$ is finite) implies that there is some $N\geq 1$ such that $\sim_N$ and $\sim_{N+1}$ are identical. Thus, $\sim_N$ is the same as $\sim_N'$. It follows from our construction of $\sim_N'$ from $\sim_N$ that $\sim_N$ is a semilattice congruence on $M$. Because $\sim_N$ is cheerful, it has $[u,v]$ as a congruence class. 
\end{proof}

A subset $E$ of a poset is called \dfn{convex} if for all $x,y\in E$, we have $[x,y]\subseteq E$. A simple consequence of the definition of a semilattice congruence, which we will need in the proof of the next proposition, is that every congruence class of a semilattice congruence is convex. 

\begin{theorem}\label{ThmCoxx6}
For each $n\geq 2$, we have \[\max_\equiv\max_{w\in S_n}\lvert D_R({\bf S}_\equiv (w))\rvert =n-2,\] where the first maximum is over all essential semilattice congruences on the left weak order of $S_n$. 
\end{theorem}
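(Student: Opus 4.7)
The plan is to prove the upper bound first and then realize it with an explicit construction. For the upper bound, the only permutation in $S_n$ with $n-1$ right descents is the longest element $w_0$, so it suffices to show that $w_0$ is never in the image of any Coxeter stack-sorting operator ${\bf S}_\equiv$. If ${\bf S}_\equiv(w) = w_0$, then $w = w_0\,\pi_\downarrow^\equiv(w)$, and since $\pi_\downarrow^\equiv(w) \leq_L w$ I obtain $\ell(w) = \ell(w_0) + \ell(\pi_\downarrow^\equiv(w)) = \binom{n}{2} + \ell(\pi_\downarrow^\equiv(w))$. But $\ell(w) \leq \binom{n}{2}$ forces $\pi_\downarrow^\equiv(w) = e$ and $w = w_0$; then $w_0 \equiv e$, contradicting essentiality (since $n\geq 2$ gives $w_0 \neq e$). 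Hence $|D_R({\bf S}_\equiv(w))| \leq n-2$ for every $w$.

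For the lower bound, I would construct a specific triple $(r, v, w)$ and an essential congruence. Set $k = \lfloor n/2 \rfloor$ and let $r \in S_n$ be the longest element of the parabolic subgroup generated by $\{s_i : i \in [n-1],\, i \neq k\}$, namely $r = k(k-1)\cdots 1\,n(n-1)\cdots (k+1)$ in one-line notation; its right descent set is $\{s_1,\ldots,s_{k-1}\}\cup\{s_{k+1},\ldots,s_{n-1}\}$, of size $n-2$. Define $v\in S_n$ by interleaving the small values $1,\ldots,k$ with the large values $k+1,\ldots,n$ so that each group occupies positions of one fixed parity, with values increasing left-to-right within each group; when $n$ is even, $v = 1(k+1)\,2(k+2)\cdots k(2k)$, and the odd-$n$ analogue is similar. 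Set $w = rv$.

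The key properties I would verify are (i) $\ell(w) = \ell(r) + \ell(v)$, so $v \leq_L w$; (ii) $D_R(v) = D_R(w)$; (iii) every pair in the symmetric difference of the right inversion sets of $v$ and $w$ has the form $(i,j)$ with $j - i \geq 2$; and (iv) $wv^{-1} = r$. All four follow from a single observation: two same-group values of $v$ occupy positions of the same parity and are listed in increasing order, so they differ in position by at least $2$. Hence every adjacent pair of entries of $v$ carries values from different groups (so $r$ cannot invert them, giving (ii)); no value pair inverted by $r$ is reversed by $v$, so the general formula $\ell(rv) = \ell(r) + \ell(v) - 2C$ yields $C=0$ and hence (i); and the assignment $(a,b)\mapsto(v^{-1}(a), v^{-1}(b))$ sends the inverted value pairs of $r$ bijectively onto the symmetric difference of the inversion sets, with image consisting only of non-adjacent position pairs, giving (iii).

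To finish, property (iii) together with the fact that the right descent set depends only on adjacent right inversions implies that every permutation in the left-weak-order interval $[v, w]$ has right descent set $D_R(v)$, so $[v, w]$ lies inside a single descent congruence class. Applying Lemma~\ref{LemGeneratingCongruence} yields a semilattice congruence $\equiv_1$ on $(S_n, \leq_L)$ with $[v, w]$ as a congruence class. Setting $\equiv := \equiv_1 \cap \equiv_{\des}$ produces a semilattice congruence that refines $\equiv_{\des}$, hence is essential by Lemma~\ref{LemEssential}. Because $[v, w]$ is contained in a single descent class, the $\equiv$-class of $v$ remains exactly $[v, w]$, of which $v$ is the minimum; so $\pi_\downarrow^\equiv(w) = v$ and ${\bf S}_\equiv(w) = wv^{-1} = r$ has $n-2$ right descents. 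The main obstacle will be the careful verification of (iii): the parity observation is conceptually transparent, but a rigorous argument requires distinguishing between inverted value pairs of $r$ and the corresponding position pairs in the inversion sets of $v$ and $w$.
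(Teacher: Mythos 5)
Your proposal is correct and follows essentially the same route as the paper: the upper bound by showing the longest element is never in the image, and the lower bound by exhibiting an interval $[v,w]$ inside a single descent class with $wv^{-1}$ equal to the longest element of a maximal proper parabolic subgroup, then invoking Lemma~\ref{LemGeneratingCongruence} and intersecting with $\equiv_{\des}$ to get an essential congruence having $[v,w]$ as a class. Your permutations coincide with the paper's $u$ and $v$ up to the choice of $\lfloor n/2\rfloor$ versus $\lceil n/2\rceil$, and your parity argument for properties (i)--(iii) is a valid (slightly more explicit) substitute for the paper's direct appeal to the convexity of descent-congruence classes.
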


\begin{proof}
It follows from Definition~\ref{DefCoxStackOp} that the decreasing permutation $n(n-1)\cdots 321$ is not in the image of any Coxeter stack-sorting operator on $S_n$. Therefore, $\max\limits_\equiv\max\limits_{w\in S_n}\lvert D_R({\bf S}_\equiv (w))\rvert \leq n-2$. 

Let $k=\left\lceil n/2\right\rceil$. Define $u\in S_n$ by $u(i)=(i+1)/2$ for $i$ odd and $u(i)=k+i/2$ for $i$ even. Define $v\in S_n$ by $v(i)=k-(i-1)/2$ for $i$ odd and $v(i)=n+1-i/2$ for $i$ even. For example, when $n=7$, we have $u=1526374$ and $v=4736251$. One can readily verify that every right inversion of $u$ is a right inversion of $v$, so $u\leq_L v$. Moreover, $\Des(u)=\Des(v)$, so $u\equiv_{\des} v$. The congruence classes of $\equiv_{\des}$ are convex, so $[u,v]$ is contained in one such congruence class. The left weak order on a Coxeter group is locally finite and has $e$ as its unique minimal element. According to Lemma~\ref{LemGeneratingCongruence}, there is a semilattice congruence $\equiv$ on $(S_n,\leq_L)$ that has $[u,v]$ as a congruence class. Let $\equiv'$ be the common refinement of $\equiv$ and $\equiv_{\des}$. In other words, we define $\equiv'$ by declaring $x\equiv' y$ if and only if $x\equiv y$ and $x\equiv_{\des} y$. Then $\equiv'$ is an essential semilattice congruence on $S_n$ that has $[u,v]$ as a congruence class. Finally, ${\bf S}_{\equiv'}(v)=vu^{-1}=k(k-1)\cdots 21n(n-1)\cdots(k+2)(k+1)$ is a permutation in the image of ${\bf S}_{\equiv'}$ with $n-2$ right descents. 
\end{proof}

Our next goal is to prove that permutations in the image of a Coxeter stack-sorting operator arising from a lattice congruence on $S_n$ must have at most $\left\lfloor\frac{2(n-1)}{3}\right\rfloor$ right descents. To do this, we will make use of a combinatorial description of the lattice congruences on $S_n$ due to Reading. These concepts are discussed in more detail in \cite{Hoang, ReadingNoncrossing} and \cite[Section 10-5]{ReadingFinite}, where they are formulated in terms of the right weak order on $S_n$; it is easy to translate to the left weak order by taking inverses of permutations. 

Fix $n\geq 2$. Suppose $v\lessdot_L w$ is a cover relation in the left weak order on $S_n$. Then $w=s_iv$ for some $i\in[n-1]$. If we let $v^{-1}(i)=a$ and $v^{-1}(i+1)=b$ and let $(a\,\,b)$ be the transposition that swaps $a$ and $b$, then $w=v\,(a\,\,b)$. Let $R=\{a+1,\ldots,b-1\}\cap v^{-1}([i-1])$ (equivalently, $R=\{a+1,\ldots,b-1\}\cap w^{-1}([i-1])$). We label the edge $(v,w)$ in the Hasse diagram of $(S_n,\leq_L)$ with the tuple $(a,b;R)$. The \dfn{fence} associated to the tuple $(a,b;R)$, denoted $\fen(a,b;R)$, is the set of all edges in the Hasse diagram of $(S_n,\leq_L)$ with label $(a,b;R)$. Let \[\Fen_n=\{\fen(a,b;R):1\leq a<b\leq n, R\subseteq\{a+1,\ldots,b-1\}\}\] be the set of all such fences. The \dfn{forcing order} is the partial order $\preceq_{\text{f}}$ on $\Fen_n$ given by declaring $\fen(a,b;R)\preceq_{\text{f}} \fen(a',b';R')$ whenever $a\leq a'<b'\leq b$ and $R'=R\cap\{a'+1,\ldots,b'-1\}$. Figure~\ref{FigCoxx11} shows the four fences of the left weak order on $S_3$ and the corresponding forcing order. 

\begin{figure}[ht]
  \begin{center}{\includegraphics[height=4.52cm]{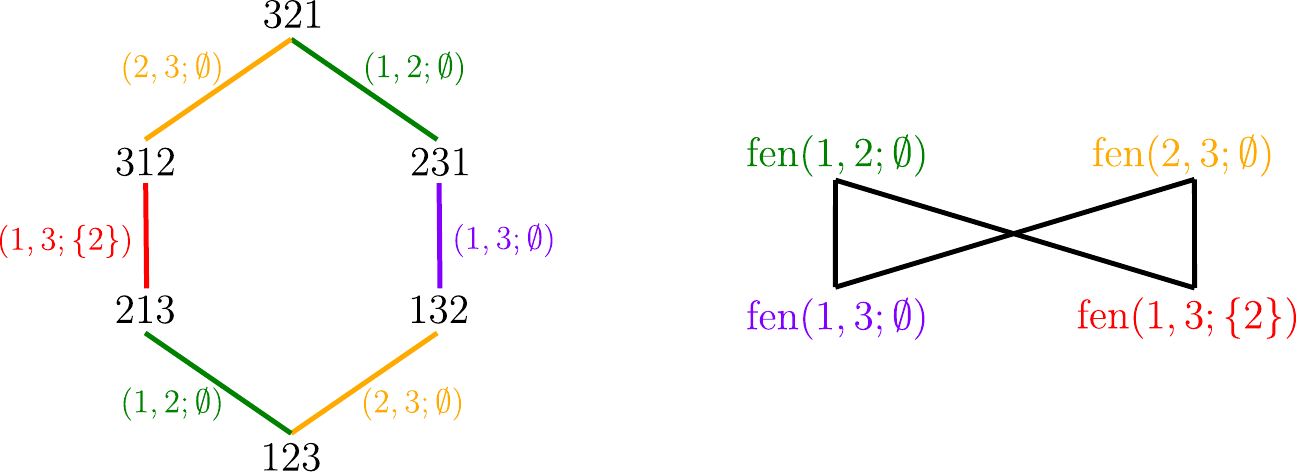}}
  \end{center}
  \caption{On the left is the left weak order on $S_3$ and its fences, which are represented by different colors. On the right is the forcing order on $\Fen_3$.}\label{FigCoxx11}
\end{figure}

For each lattice congruence $\equiv$ on $(S_n,\leq_L)$, let $\eta(\equiv)$ be the set of edges $(v,w)$ in the Hasse diagram of $(S_n,\leq_L)$ such that $v\equiv w$. Let $\Psi(\equiv)$ be the collection of fences in $\Fen_n$ that have a nonempty intersection with $\eta(\equiv)$. In other words, $\fen(a,b;R)$ is in $\Psi(\equiv)$ if and only if there is an edge $(v,w)$ with label $(a,b;R)$ such that $v\equiv w$. 

The following theorem appeared originally as \cite[Theorem~4.6]{ReadingNoncrossing}; although, in \cite[Theorem~4]{Hoang}, it was reformulated in a manner closer to what we have presented. 

\begin{theorem}[\cite{ReadingNoncrossing}]\label{ThmFences}
If $\equiv$ is a lattice congruence on $(S_n,\leq_L)$, then $\eta(\equiv)=\bigcup_{F\in\Psi(\equiv)}F$. Furthermore, the map $\Psi$ is a bijection from the set of lattice congruences on $(S_n,\leq_L)$ to the set of order ideals of $(\Fen_n,\preceq_{\mathrm{f}})$. 
\end{theorem}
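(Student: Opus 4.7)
The plan is to reduce the theorem to the general theory of lattice congruences on finite lattices and then carry out a combinatorial identification of the forcing structure. For any finite lattice $L$, a congruence $\equiv$ is determined by its set $\eta(\equiv)$ of collapsed cover relations, and $\eta(\equiv)$ is characterized by closure under the \emph{polygon rule}: whenever $\{a,b,c,d\}$ is a $4$-element sublattice with $a \lessdot b, c \lessdot d = b \vee c$, the parallel pair $\{(a,b),(c,d)\}$ is entirely in $\eta(\equiv)$ or entirely outside it, and similarly for $\{(a,c),(b,d)\}$. Consequently, lattice congruences correspond bijectively to order ideals in the ``forcing poset'' on forcing-equivalence classes of cover relations. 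My goal is to show that, for $L = (S_n,\leq_L)$, these forcing-equivalence classes are exactly the fences $\fen(a,b;R)$, and the induced partial order matches $\preceq_{\mathrm{f}}$.

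\textbf{Step 1: edges with the same label are forcing-equivalent.} Let $(v,w)$ and $(v',w')$ be two covers both labelled $(a,b;R)$. The label records (i) the pair of values $i,i{+}1$ being swapped, (ii) their positions $a<b$, and (iii) the set $R$ of intermediate positions occupied by entries smaller than $i$; it leaves the positions of the remaining entries free. I would argue that $v$ and $v'$ are connected by a sequence of adjacent transpositions acting only on ``free'' entries, and that each such transposition exhibits a $4$-element sublattice of $(S_n,\leq_L)$ whose two parallel edges both bear the label $(a,b;R)$. Applying the polygon rule along this chain propagates collapse from any labelled edge to any other, proving that a single collapsed edge forces the entire fence to collapse; this gives the identity $\eta(\equiv) = \bigcup_{F \in \Psi(\equiv)} F$.

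\textbf{Step 2: the forcing order matches $\preceq_{\mathrm{f}}$ and every order ideal is realized.} For the forward direction, suppose $\fen(a,b;R) \preceq_{\mathrm{f}} \fen(a',b';R')$, meaning $a \leq a' < b' \leq b$ and $R' = R \cap \{a'+1,\ldots,b'-1\}$. I would exhibit a small witnessing sublattice of $(S_n,\leq_L)$, typically a pentagon or a stacked pair of quadrilaterals, containing an edge labelled $(a,b;R)$ and an edge labelled $(a',b';R')$ in which the polygon rule forces the former from the latter. The compatibility conditions on the parameters are precisely what make the required joins and meets land where needed. For the converse, I would produce, for each fence $F$, a concrete lattice congruence whose smallest collapsed fence is $F$; suitable coarsenings of the descent congruence or of permutree congruences localised to the range $\{a,\ldots,b\}$ serve as candidates. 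Injectivity of $\Psi$ is then immediate from Step 1, and surjectivity onto order ideals follows: for any order ideal $\mathcal I$ of $(\Fen_n,\preceq_{\mathrm{f}})$, the edge set $\bigcup_{F \in \mathcal I} F$ satisfies the polygon rule and thus defines a lattice congruence mapping to $\mathcal I$ under $\Psi$.

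\textbf{Main obstacle.} The most delicate part is the parallelogram construction of Step 1. For each elementary move of a free entry, one must verify that the resulting $4$-element subset of $(S_n,\leq_L)$ really is a sublattice and that both of its parallel cover relations share the label $(a,b;R)$. This requires a careful case analysis of how meets and joins of adjacent transpositions behave in the left weak order, broken up by the relative size and position of the moving entry with respect to $i$, $i{+}1$, and $R$. It is here that the seemingly baroque definition of the label earns its keep: the set $R$ is exactly the data preserved by every such move and exactly the data that distinguishes forcing classes. Once this combinatorial heart is in hand, Step 2 reduces to bookkeeping with small intervals of the weak order.
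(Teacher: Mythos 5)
First, a point of order: the paper does not prove this theorem --- it is quoted as \cite[Theorem~4.6]{ReadingNoncrossing}, in the reformulation of \cite[Theorem~4]{Hoang} --- so there is no internal proof to compare against. Your outline does follow the general philosophy of the known proofs (congruences of a finite lattice are determined by which cover relations they collapse, collapse propagates by forcing, and for the weak order the forcing classes and the forcing order admit a combinatorial description), but as written it has a genuine gap at exactly the step you identify as the heart of the argument.

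The gap is that your ``polygon rule'' quantifies only over $4$-element sublattices, i.e.\ quadrilaterals, and quadrilateral forcing alone does not generate the forcing in $(S_n,\leq_L)$. The essential forcing lives in the \emph{hexagonal} rank-two intervals coming from the braid relations $s_is_{i+1}s_i=s_{i+1}s_is_{i+1}$. Already for $n=3$ the entire weak order is a single hexagon containing no quadrilateral of your form, yet $\fen(1,2;\emptyset)=\{(123,213),(231,321)\}$ consists of two antipodal edges of that hexagon, and the relations $\fen(1,3;R)\preceq_{\mathrm{f}}\fen(1,2;\emptyset)$ encode precisely the fact that contracting a bottom edge of a hexagon forces contracting the opposite top edge and both middle edges (one checks directly that the congruence generated by $123\equiv 213$ has classes $\{123,213,312\}$ and $\{132,231,321\}$). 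None of this is visible to the quadrilateral rule: Step~1 cannot connect the two edges of $\fen(1,2;\emptyset)$, Step~2 cannot produce the cross-fence forcings, and closure under your rule would wrongly certify the collapse of the single edge $(123,213)$ as a congruence. The repair is the full polygon rule for polygonal lattices ($2m$-gons, not just squares), and establishing that rule together with the identification of the resulting forcing classes with fences is where the real work in Reading's proof lies. A second unaddressed point: for congruences to biject with \emph{order ideals} of $(\Fen_n,\preceq_{\mathrm{f}})$ you need the forcing preorder to be antisymmetric, i.e.\ congruence-uniformity of the weak order; this is itself a nontrivial theorem (Caspard--Le~Conte~de~Poly-Barbut--Morvan, or Reading's shard theory), not bookkeeping.
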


The following lemma will handle most of the heavy lifting in our proof of Theorem~\ref{ThmCoxx1}.

\begin{lemma}\label{LemCoxx1}
Suppose $\equiv$ is a lattice congruence on $(S_n,\leq_L)$. For $w\in W$ and $v=\pi_\downarrow(w)$, let $\mathcal E_{\equiv}(w)$ denote the set of indices $i\in\Des({\bf S}_\equiv(w))$ such that there exists $r\in v^{-1}([i-1])$ satisfying $v^{-1}(i)<r<v^{-1}(i+1)$. Then $\lvert \mathcal E_\equiv(w)\rvert \leq n-1-\lvert \Des({\bf S}_\equiv(w))\rvert $. 
\end{lemma}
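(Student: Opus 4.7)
Throughout the proof, set $u := {\bf S}_\equiv(w) = w v^{-1}$, so that $w = uv$ is a reduced product with $\ell(w) = \ell(u) + \ell(v)$. The standard reduced-product identity forces, for every right inversion $(p,q)$ of $u$, the inequality $v^{-1}(p) < v^{-1}(q)$. Setting $a_i := v^{-1}(i)$ and $b_i := v^{-1}(i+1)$ therefore gives $a_i < b_i$ for each $i \in \Des(u)$. The plan is to construct an injection $\phi \colon \mathcal{E}_\equiv(w) \hookrightarrow [n-1] \setminus \Des(u)$, from which the desired inequality is immediate. Define $\phi(i) := v(r_i)$, where $r_i$ is the position in $(a_i, b_i)$ at which $v$ attains its maximum value subject to being strictly less than $i$; the definition of $\mathcal{E}_\equiv(w)$ guarantees such an $r_i$ exists.

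First I would verify that $\phi(i) \notin \Des(u)$. Setting $j := \phi(i)$ and $r := r_i$, this amounts to $w(r) < w(v^{-1}(j+1))$. Maximality of $j$ constrains the location of $v^{-1}(j+1)$: either $j+1 = i$ and $v^{-1}(j+1) = a_i$, or $j+1 < i$ in which case $v^{-1}(j+1) \notin (a_i, b_i)$. When $v^{-1}(j+1) < r$, the pair $(v^{-1}(j+1), r)$ is a right inversion of $v$ (since $v(v^{-1}(j+1)) = j+1 > j = v(r)$), hence of $w$ because $v \leq_L w$, which yields the required inequality. The remaining subcase $v^{-1}(j+1) > b_i$ is handled by the symmetric argument applied to the pair $(r, v^{-1}(j+1))$, combined with the equality $\Des(v) = \Des(w)$ from Lemma~\ref{LemEssential}.

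The main step is to prove that $\phi$ is injective, and this is where the lattice-congruence hypothesis is indispensable. Suppose toward a contradiction that $\phi(i_1) = \phi(i_2) = j$ for some $i_1 < i_2$, and set $r := v^{-1}(j)$. For each $\ell$, the pair $(a_{i_\ell}, b_{i_\ell})$ is a right inversion of $w$, so $s_{i_\ell} v \leq_L w$; since $v = \pi_\downarrow^\equiv(w)$ and $w$ lie in the same congruence class (which is convex), $s_{i_\ell} v \equiv v$, meaning the fence $\fen(a_{i_\ell}, b_{i_\ell}; R_{i_\ell})$ lies in $\Psi(\equiv)$. A short case analysis rules out $b_{i_1} < b_{i_2}$ (in which case $b_{i_1} \in (a_{i_2}, b_{i_2})$ would carry the value $i_1 + 1 < i_2$, contradicting the maximality of $j$ as a witness for $i_2$), so $b_{i_2} < b_{i_1}$. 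Choose $k^*$ to be the maximum element of $\{k \in \{i_1+1, \ldots, i_2\} : v^{-1}(k) \geq b_{i_2}\}$; this set contains $i_1 + 1$ but not $i_2$, so $k^* \in \{i_1+1, \ldots, i_2 - 1\}$, and one has $v^{-1}(k^*) \geq b_{i_2}$ while $v^{-1}(k^* + 1) < b_{i_2}$. The maximality of $j$ in the interval $(a_{i_2}, b_{i_2})$ further forces $v^{-1}(k^* + 1) \leq a_{i_2}$. Thus, swapping the values $k^*$ and $k^*+1$ in $v$ produces a cover $v' \lessdot_L v$ whose fence label $(p, q; R^*)$ satisfies $p \leq a_{i_2} < b_{i_2} \leq q$ and $R^* \cap (a_{i_2}, b_{i_2}) = R_{i_2}$ (the latter equality because every value in $(a_{i_2}, b_{i_2})$ that is less than $i_2$ is at most $j \leq k^* - 2$, and hence already in $v^{-1}([k^*-1])$). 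This means $\fen(p, q; R^*) \preceq_{\mathrm{f}} \fen(a_{i_2}, b_{i_2}; R_{i_2})$, so by downward closure of $\Psi(\equiv)$ in the forcing order (Theorem~\ref{ThmFences}), the fence $\fen(p, q; R^*)$ lies in $\Psi(\equiv)$. But this contradicts the fact that $v = \pi_\downarrow^\equiv(w)$ is the minimum of its $\equiv$-class, since a contracted cover below $v$ would supply a strictly smaller element of that class.

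The main obstacle is the forcing step in the injectivity argument: one must locate the specific cover below $v$ among several candidates and verify the matching of $R$-sets using the maximality condition on $j$. The reliance on Theorem~\ref{ThmFences} — and hence on $\equiv$ being a lattice congruence rather than only a semilattice congruence — is essential, consistent with Theorem~\ref{ThmCoxx6}, where the analogous bound fails dramatically for general essential semilattice congruences.
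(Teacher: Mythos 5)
Your overall strategy coincides with the paper's: both build an injection from $\mathcal E_\equiv(w)$ into $[n-1]\setminus\Des({\bf S}_\equiv(w))$ sending $i$ to the largest value below $i$ that $v$ attains strictly between $v^{-1}(i)$ and $v^{-1}(i+1)$, and your injectivity argument (producing a cover edge below $v$ whose fence lies below $\fen(a_{i_2},b_{i_2};R_{i_2})$ in the forcing order and contracting it via Theorem~\ref{ThmFences}) is sound. The gap is in the well-definedness step, in the subcase $v^{-1}(j+1)>b_i$. There you need $w(r)<w(v^{-1}(j+1))$, i.e., that the pair $(r,v^{-1}(j+1))$ --- which is a \emph{non}-inversion of $v$ --- is also a non-inversion of $w$. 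But $v\leq_L w$ only guarantees that right inversions of $v$ are right inversions of $w$; it says nothing about non-inversions being preserved. And $\Des(v)=\Des(w)$ only controls adjacent positions, whereas $r$ and $v^{-1}(j+1)$ need not be adjacent. So the ``symmetric argument'' you invoke does not exist, and nothing elementary forces $j\notin\Des({\bf S}_\equiv(w))$ in this subcase.

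The correct resolution is that this subcase is vacuous when $v=\pi_\downarrow^\equiv(w)$ for a \emph{lattice} congruence, and showing this requires the same fence/forcing machinery you reserve for injectivity. If $j+1<i$ were attained at a position $>b_i$, then the maximum value $\gamma'(i)<i$ attained by $v$ at any position to the right of $a_i$ would exceed $j$ and hence be attained at a position $r_1\geq b_i$ (it cannot lie in $(a_i,b_i)$ by maximality of $j$), while $r_0:=v^{-1}(\gamma'(i)+1)\leq a_i$. The cover edge joining $v$ to the permutation obtained by swapping the values $\gamma'(i)$ and $\gamma'(i)+1$ then carries a fence lying below $\fen(a_i,b_i;R_i)$ in $\preceq_{\text{f}}$, so Theorem~\ref{ThmFences} contracts an edge below $v$, contradicting the minimality of $v$ in its congruence class. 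This is exactly the paper's claim that $\gamma(i)=\gamma'(i)$; once it is established, one always has $v^{-1}(j+1)\leq a_i<r$, so only your first (correct) subcase occurs, and in fact the paper's short injectivity argument also falls out of the same identity. Your proof becomes correct if you replace the ``symmetric argument'' with this second application of the fence argument.
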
 

\begin{proof}
For each $i\in\mathcal E_\equiv(w)$, let \[R_i=\{r\in v^{-1}([i-1]):v^{-1}(i)<r<v^{-1}(i+1)\}\quad\text{and}\quad R_i'=\{r'\in v^{-1}([i-1]):v^{-1}(i)<r'\};\] these sets are nonempty by the definition of $\mathcal E_\equiv(w)$. For $i\in\mathcal E_\equiv(w)$, let $\gamma(i)=\max\{v(r):r\in R_i\}$ and $\gamma'(i)=\max\{v(r'):r'\in R_i'\}$. We are going to show that $\gamma(i)=\gamma'(i)$. We will also prove that the map $\gamma$ is an injection from $\mathcal E_\equiv(w)$ into $[n-1]\setminus\Des({\bf S}_{\equiv}(w))$, which will complete the proof. 

Fix $i\in\mathcal E_\equiv(w)$. To ease notation, let $a=v^{-1}(i)$ and $b=v^{-1}(i+1)$. Since $i\in \Des({\bf S}_\equiv(w))$ and ${\bf S}_\equiv(w)=wv^{-1}$, we have $w(a)>w(b)$. The definition of $\mathcal E_\equiv(w)$ implies that $a<b$, so $(a,b)$ is a right inversion of $w$. The right inversions of $s_iv$ are the right inversions of $v$ together with $(a,b)$. Since $v\leq_L w$, the right inversions of $v$ are all right inversions of $w$. Consequently, $s_iv\leq_L w$. It follows that $s_iv\in[v,w]$, so the congruence class of $\equiv$ containing $v$ and $w$ must also contain $s_iv$ (congruence classes are convex). This means that $v\equiv s_iv$, so the edge $(v,s_iv)$ in the Hasse diagram of $(S_n,\leq_L)$ is in $\eta(\equiv)$. This edge has label $(a,b;R_i)$, so $\fen(a,b;R_i)\in\Psi(\equiv)$. 

Let $r_1=v^{-1}(\gamma'(i))$, and note that $r_1\in R_i'$ by the definition of $\gamma'(i)$. Let $r_0=v^{-1}(\gamma'(i)+1)$. It follows from the definition of $\gamma'(i)$ that $r_0\leq a<r_1$. Let $Q=\{q\in v^{-1}([\gamma'(i)-1]):r_0<q<r_1\}$. We want to prove that $\gamma'(i)=\gamma(i)$; this is equivalent to showing that $r_1\in R_i$, which is equivalent to showing that $r_1<b$. Suppose, by way of contradiction, that $r_1\geq b$. Then every element of $R_i$ is at most $\gamma'(i)-1$, so $Q\cap\{a+1,\ldots,b-1\}=R_i$. This implies that $\fen(r_0,r_1;Q)\preceq_{\text{f}}\fen(a,b;R_i)$. Theorem~\ref{ThmFences} tells us that $\Psi(\equiv)$ is an order ideal of $(\Fen_n,\preceq_{\text{f}})$, so $\fen(r_0,r_1;Q)\in\Psi(\equiv)$. Using Theorem~\ref{ThmFences} again, we find that $\fen(r_0,r_1;Q)\subseteq\eta(\equiv)$. Notice that $v(r_0\,\,r_1)$ is covered by $v$ in the left weak order because $v(r_1)=\gamma'(i)$ and $v(r_0)=\gamma'(i)+1$. The edge $(v(r_0\,\,r_1), v)$ in the Hasse diagram has label $(r_0,r_1;Q)$, so this edge is in the fence $\fen(r_0,r_1;Q)$. This implies that this edge is in $\eta(\equiv)$, so $v(r_0\,\,r_1)\equiv v$. However, this contradicts the fact that $v$ is the minimal element of its congruence class (because it is $\pi_\downarrow(w)$). We conclude from this contradiction that $\gamma(i)=\gamma'(i)$. In particular, $v^{-1}(\gamma(i)+1)=r_0\leq a<r_1=v^{-1}(\gamma(i))$. As $(r_0,r_1)$ is a right inversion of $v$, it must also be a right inversion of $w$ (since $v\leq_L w$). It follows that $w(v^{-1}(\gamma(i)))=w(r_1)<w(r_0)=w(v^{-1}(\gamma(i)+1))$, so $\gamma(i)\in[n-1]\setminus\Des(wv^{-1})=[n-1]\setminus\Des({\bf S}_\equiv(w))$. 

We have shown that $\gamma$ is a map from $\mathcal E_\equiv(w)$ to $[n-1]\setminus\Des({\bf S}_\equiv(w))$, so we are left with the task of proving its injectivity. Suppose  $i,j\in\mathcal E_\equiv(w)$ are such that $\gamma(i)=\gamma(j)$. Without loss of generality, we may assume $j\leq i$. By the definition of $\gamma$ and the assumption that $\gamma(i)=\gamma(j)$, we have $v^{-1}(\gamma(i))\in R_i\cap R_j$. Therefore, $v^{-1}(i)<v^{-1}(\gamma(i))<v^{-1}(j+1)$. This implies that $j+1\neq i$, so $j+1\in[i-1]\cup\{i+1\}$. We saw in the previous paragraph that $\gamma'(i)=\gamma(i)$. If $j+1\in[i-1]$, then $v^{-1}(j+1)\in R_i'$, so $j+1\leq\gamma'(i)=\gamma(i)$. However, this contradicts the fact that $v^{-1}(\gamma(i))\in R_j\subseteq v^{-1}([j-1])$. We deduce that $j+1=i+1$, so $j=i$.   
\end{proof}

In the proof of the following theorem, we use the automorphism $\alpha$ of $S_n$ given by $\alpha(w)=w_0ww_0$, where $w_0=n(n-1)\cdots 321$. Given a lattice congruence $\equiv$ on $(S_n,\leq_L)$, recall from Section~\ref{SecCoxeter} that we write $\alpha(\equiv)$ for the lattice congruence defined by saying $v\,\alpha(\equiv)\,w$ if and only if $\alpha(v)\equiv \alpha(w)$. As stated in \eqref{EqCoxx1}, we have the identities $\pi_\downarrow^{\alpha(\equiv)}(\alpha(w))=\alpha(\pi_\downarrow^\equiv(w))$ and ${\bf S}_{\alpha(\equiv)}(\alpha(w))=\alpha({\bf S}_\equiv(w))$. 

\begin{theorem}\label{ThmCoxx1}
For each $n\geq 1$, we have \[\max_\equiv\max_{w\in S_n}\lvert D_R({\bf S}_\equiv (w))\rvert =\left\lfloor\frac{2(n-1)}{3}\right\rfloor,\] where the first maximum is over all essential lattice congruences on the left weak order of $S_n$. 
\end{theorem}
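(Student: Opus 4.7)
The plan is to prove the upper bound $|D_R({\bf S}_\equiv(w))| \leq \lfloor 2(n-1)/3 \rfloor$ by combining Lemma~\ref{LemCoxx1} with its $\alpha$-dual, and to establish tightness via an explicit family with $\equiv = \equiv_{\des}$.

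For the upper bound, I would fix an essential lattice congruence $\equiv$ and an element $w \in S_n$, setting $v = \pi_\downarrow^\equiv(w)$ and $d = |\Des({\bf S}_\equiv(w))|$. As noted in the proof of Lemma~\ref{LemCoxx1}, every $i \in \Des({\bf S}_\equiv(w))$ satisfies $a_i := v^{-1}(i) < v^{-1}(i+1) =: b_i$. I would partition the descent set into three pieces: the set $\mathcal{E}_\equiv(w)$ from Lemma~\ref{LemCoxx1}; its mirror $\mathcal{E}'_\equiv(w)$, consisting of descents $i$ for which some $r$ with $a_i < r < b_i$ has $v(r) > i+1$; and $\mathcal{F}_\equiv(w)$, consisting of descents with $b_i = a_i + 1$. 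Because any $r$ strictly between $a_i$ and $b_i$ must have $v(r) \notin \{i, i+1\}$, these three sets cover $\Des({\bf S}_\equiv(w))$.

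The key observation is that $\mathcal{F}_\equiv(w) = \emptyset$. Indeed, if $i \in \mathcal{F}_\equiv(w)$ then $v$ has $i$ immediately followed by $i+1$, so $a_i$ is an ascent position of $v$; by Lemma~\ref{LemEssential}, $v \equiv_{\des} w$, so $a_i$ is also an ascent of $w$. But from $w = {\bf S}_\equiv(w)\, v$ together with $v(a_i) = i$, $v(a_i+1) = i+1$, and $i \in \Des({\bf S}_\equiv(w))$, one finds $w(a_i) > w(a_i+1)$, a contradiction. Lemma~\ref{LemCoxx1} then yields $|\mathcal{E}_\equiv(w)| \leq n-1-d$. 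To bound $|\mathcal{E}'_\equiv(w)|$ I would apply Lemma~\ref{LemCoxx1} to the essential lattice congruence $\alpha(\equiv)$ on $\alpha(w)$; after translating via \eqref{EqCoxx1} and the standard reflection $\Des(\alpha(u)) = \{n - i : i \in \Des(u)\}$, the map $i \leftrightarrow n - i$ gives a bijection between $\mathcal{E}_{\alpha(\equiv)}(\alpha(w))$ and $\mathcal{E}'_\equiv(w)$, so $|\mathcal{E}'_\equiv(w)| \leq n - 1 - d$. Combining, $d \leq |\mathcal{E}_\equiv(w)| + |\mathcal{E}'_\equiv(w)| + |\mathcal{F}_\equiv(w)| \leq 2(n-1-d)$, so $d \leq \lfloor 2(n-1)/3 \rfloor$.

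For tightness I would use the descent congruence. Since ${\bf S}_{\equiv_{\des}}$ reverses each maximal strictly decreasing block of $w$ (read left to right), the descents of ${\bf S}_{\equiv_{\des}}(w)$ correspond exactly to those block boundaries where the maximum of the left block exceeds the minimum of the right. I would construct $w$ with a block sequence alternating between singletons and decreasing pairs, choosing values so that every boundary is a descent-boundary. For $n = 3k+1$, one takes $w$ with blocks $[a_1], [b_1, c_1], [a_2], [b_2, c_2], \ldots, [a_{k+1}]$ and values satisfying $c_j < a_j < b_j$ and $c_j < a_{j+1} < b_j$, giving $2k + 1$ blocks and hence $2k = \lfloor 2(n-1)/3 \rfloor$ descents in the image; the cases $n \equiv 0, 2 \pmod 3$ admit analogous constructions. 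The main obstacle will be establishing $\mathcal{F}_\equiv(w) = \emptyset$: the subtle interaction between essentialness (Lemma~\ref{LemEssential}, which equalizes the descent sets of $v$ and $w$) and the identity $w = {\bf S}_\equiv(w)\, v$ is exactly what reduces the upper bound from the semilattice value $n-2$ of Theorem~\ref{ThmCoxx6} down to $\lfloor 2(n-1)/3 \rfloor$ under the lattice hypothesis.
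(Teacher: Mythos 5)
Your proposal is correct and follows essentially the same route as the paper: the upper bound comes from applying Lemma~\ref{LemCoxx1} to $\equiv$ and to $\alpha(\equiv)$ and adding the two estimates, with your observation that $\mathcal F_\equiv(w)=\emptyset$ being exactly the paper's step showing $v^{-1}(i+1)\geq v^{-1}(i)+2$ via Lemma~\ref{LemEssential}, and the tightness example uses the descent congruence with the same alternating singleton/decreasing-pair run structure as the paper's permutations $\zeta_n$. The only differences are cosmetic (your three-set cover versus the paper's two-case split, and a slightly different but equivalent explicit family for the extremal permutations).
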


\begin{proof}
Preserve the notation from Lemma~\ref{LemCoxx1}. Let $\equiv$ be an essential lattice congruence on the left weak order of $S_n$, and choose $w\in S_n$. Let $v=\pi_\downarrow^\equiv(w)$ so that ${\bf S}_\equiv(w)=wv^{-1}$. 

Because $\equiv$ is essential and $v\equiv w$, we have $\Des(v)=\Des(w)$ by Lemma~\ref{LemEssential}. Suppose $i\in \Des({\bf S}_\equiv(w))$. Then $w(v^{-1}(i))>w(v^{-1}(i+1))$. If we had $v^{-1}(i+1)<v^{-1}(i)$, then the pair $(v^{-1}(i+1),v^{-1}(i))$ would be a right inversion of $v$ but not a right inversion of $w$, contradicting the fact that $v\leq_L w$. Therefore, $v^{-1}(i)<v^{-1}(i+1)$. If we had $v^{-1}(i+1)=v^{-1}(i)+1$, then $v^{-1}(i)$ would be in $\Des(w)$ but not not in $\Des(v)$, contradicting the fact that $\Des(v)=\Des(w)$. Therefore, $v^{-1}(i+1)\geq v^{-1}(i)+2$. Choose an integer $r$ such that $v^{-1}(i)<r<v^{-1}(i+1)$. If $v(r)\leq i-1$, then $i\in\mathcal E_\equiv(w)$. Now suppose $v(r)\geq i$. We must have $v(r)\geq i+2$ by the choice of $r$. Observe that $n-i\in\Des(\alpha({\bf S}_\equiv(w)))=\Des({\bf S}_{\alpha(\equiv)}(\alpha(w)))$. If we let $u=\pi_\downarrow^{\alpha(\equiv)}(\alpha(w))=\alpha(v)$, then $u^{-1}(n-i)=n+1-v^{-1}(i+1)<n+1-r<n+1-v^{-1}(i)=u^{-1}(n-i+1)$. Furthermore, $u(n+1-r)=n+1-v(r)\leq n-i-1$. This shows that $n-i\in\mathcal E_{\alpha(\equiv)}(\alpha(w))$, so $i\in n-\mathcal E_{\alpha(\equiv)}(\alpha(w))$.

We have shown that \[\Des({\bf S}_\equiv(w))\subseteq\mathcal E_\equiv(w)\cup\left(n-\mathcal E_{\alpha(\equiv)}(\alpha(w))\right),\] so we can invoke Lemma~\ref{LemCoxx1} to find that \[\lvert \Des({\bf S}_\equiv(w))\rvert \leq \lvert \mathcal E_\equiv(w)\rvert +\lvert \mathcal E_{\alpha(\equiv)}(\alpha(w))\rvert \leq n-1-\lvert \Des({\bf S}_\equiv(w))\rvert +n-1-\lvert \Des({\bf S}_{\alpha(\equiv)}(\alpha(w)))\rvert \] \[=2n-2-\lvert \Des({\bf S}_\equiv(w))\rvert -\lvert \Des(\alpha({\bf S}_\equiv(w)))\rvert .\] The map $s_j\mapsto\alpha(s_j)$ is a bijection from $\Des({\bf S}_\equiv(w))$ to $\Des(\alpha({\bf S}_\equiv(w)))$, so \[\lvert \Des({\bf S}_\equiv(w))\rvert \leq 2n-2-2\lvert 
\Des({\bf S}_\equiv(w))\rvert .\] Upon rearranging, we obtain $\lvert D_R({\bf S}_\equiv(w))\rvert =\lvert \Des({\bf S}_\equiv(w))\rvert \leq\frac{2(n-1)}{3}$. 

To prove that equality holds in the theorem, it suffices to exhibit a permutation $\zeta_n\in S_n$ such that $\lvert \Des({\bf S}_{\equiv_{\des}}(\zeta_n))\rvert =\left\lfloor\frac{2(n-1)}{3}\right\rfloor$. We begin by recursively constructing the permutations $\zeta_3,\zeta_6,\zeta_9,\ldots$. Let $\zeta_3=231\in S_3$. For $k\geq 1$, we obtain $\zeta_{3k+3}$ from $\zeta_{3k}$ by replacing the entry $3k$ in $\zeta_{3k}$ with the entry $3k+2$ and then appending the entries $3k+1,3k+3,3k$ (in this order) to the end of the resulting permutation. For example, $\zeta_6=251463$ and $\zeta_9=251483796$. Now, suppose $n$ is not divisible by $3$, and let $m=\left\lceil n/3\right\rceil$. Let $\zeta_n$ be the standardization of the permutation formed by the first $n$ entries of $\zeta_{3m}$. For example, if $n=7$, then $m=3$, and the permutation formed by the first $7$ entries of $\zeta_{9}$ is $2514837$. The standardization of this permutation is $\zeta_7=2514736$. 

A \dfn{descending run} of a permutation $w$ is a maximal consecutive decreasing subsequence of $w$. As explained in \cite{DefantCoxeterPop}, the map ${\bf S}_{\equiv_{\des}}$ is the pop-stack-sorting map, which acts by reversing the descending runs of a permutation while keeping entries in different descending runs in the same relative order (this also follows easily from Proposition~\ref{PropCoxx1} below). For example, the descending runs of $\zeta_9$ are $2,51,4,83,7,96$, so ${\bf S}_{\equiv_{\des}}(\zeta_9)=215438769$. For each $k\geq 1$, let \[\xi_{3k-1}=21543876\cdots (3k-1)(3k-2)(3k-3).\] It is straightforward to check by hand that \[{\bf S}_{\equiv_{\des}}(\zeta_n)=\begin{cases} \xi_{3k-1}(3k) & \mbox{if } n=3k; \\ \xi_{3k-1}(3k+1)(3k) & \mbox{if } n=3k+1; \\ \xi_{3k-1}(3k+1)(3k)(3k+2) & \mbox{if } n=3k+2 \end{cases}\] (where juxtaposition represents concatenation). Hence, $\lvert \Des({\bf S}_{\equiv_{\des}}(\zeta_n))\rvert =\left\lfloor\frac{2(n-1)}{3}\right\rfloor$.  
\end{proof}

\section{Permutrees and Postorders}\label{SecPermutrees}

\subsection{Permutrees}
Pilaud and Pons \cite{Pilaud} introduced permutrees as generalizations of permutations, binary plane trees, Cambrian trees, and binary sequences. These objects provide a rich family of lattice congruences on $(S_n,\leq_L)$ that contains the equality congruence, the sylvester congruence, the Cambrian congruences, and the descent congruence. Permutrees also give rise to fascinating \emph{permutree lattices}, \emph{permutreehedra}, and \emph{permutree Hopf algebras}. We will be interested in permutrees in this article because they afford Coxeter stack-sorting operators that have useful combinatorial descriptions in terms of postorder readings. In Sections~\ref{SecTypeB} and~\ref{SecAffine}, we will define analogues of permutrees for Coxeter groups of type $B$ and type $\widetilde A$, respectively.   
 
The definition of a permutree makes use of the symbols $\noneCirc, \upCirc, \downCirc, \upDownCirc$. A \dfn{decoration} is a word over the alphabet $\{\noneCirc, \upCirc, \downCirc, \upDownCirc\}$. If $v$ is a vertex in a directed tree $T$, then a vertex $u$ is a \dfn{parent} (respectively, \dfn{child}) of $v$ if there is an arrow $v\to u$ (respectively, $u\to v$). We say $u$ is an \dfn{ancestor} (respectively, \dfn{descendant}) of $v$ if there is a directed path with at least one arrow from $v$ to $u$ (respectively, from $u$ to $v$). 
 
\begin{definition}\label{DefPermutrees}
Let $\delta=\delta_1\cdots\delta_n\in \{\noneCirc, \upCirc, \downCirc, \upDownCirc\}^n$ be a decoration. A \dfn{permutree} with decoration $\delta$, also called a \dfn{$\delta$-permutree}, is a directed plane tree $T$ with vertex set $V(T)$ endowed with a bijective labeling $p:V(T)\to[n]$ such that 
\begin{itemize}
\item each vertex $v$ such that $\delta_{p(v)}\in \{\noneCirc, \downCirc\}$ has at most $1$ parent;
\item each vertex $v$ such that $\delta_{p(v)}\in \{\noneCirc,\upCirc\}$ has at most $1$ child; 
\item each vertex $v$ such that $\delta_{p(v)}\in\{\upCirc,\upDownCirc\}$ has at most one parent $u$ satisfying $p(u)<p(v)$; if such a vertex $u$ exists, then every ancestor $u'$ of $u$ also satisfies $p(u')<p(v)$;
\item each vertex $v$ such that $\delta_{p(v)}\in\{\upCirc,\upDownCirc\}$ has at most one parent $u$ satisfying $p(u)>p(v)$; if such a vertex $u$ exists, then every ancestor $u'$ of $u$ also satisfies $p(u')>p(v)$;
\item each vertex $v$ such that $\delta_{p(v)}\in\{\downCirc,\upDownCirc\}$ has at most one child $u$ satisfying $p(u)<p(v)$; if such a vertex $u$ exists, then every descendant $u'$ of $u$ also satisfies $p(u')<p(v)$;
\item each vertex $v$ such that $\delta_{p(v)}\in\{\downCirc,\upDownCirc\}$ has at most one child $u$ satisfying $p(u)>p(v)$; if such a vertex $u$ exists, then every descendant $u'$ of $u$ also satisfies $p(u')>p(v)$.
\end{itemize}
A parent (respectively, child) $u$ of a vertex $v$ satisfying $p(u)<p(v)$ is called a \dfn{left parent} (respectively, \dfn{left child}) of $v$. A parent (respectively, child) $u'$ of a vertex $v$ satisfying $p(u')>p(v)$ is called a \dfn{right parent} (respectively, \dfn{right child}) of $v$.
\end{definition} 

\begin{figure}[ht]
  \begin{center}{\includegraphics[height=4.226cm]{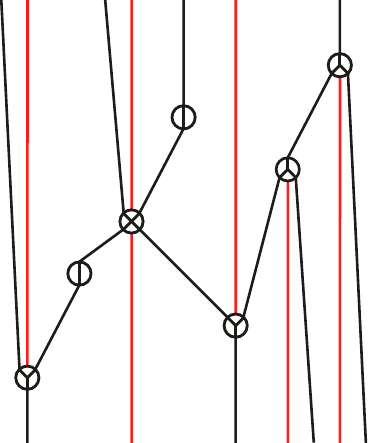}}
  \end{center}
  \caption{A permutree with decoration $\upCirc\noneCirc\upDownCirc\noneCirc\upCirc\downCirc\downCirc$.}\label{FigCoxx1}
\end{figure}

Figure~\ref{FigCoxx1} shows an example of a permutree with $7$ vertices that has decoration $\upCirc\noneCirc\upDownCirc\noneCirc\upCirc\downCirc\downCirc$. Following \cite{Pilaud}, we use the following conventions when drawing a $\delta$-permutree: 

\begin{itemize}
\item Each edge is directed upward, so we do not need to draw the orientations separately. 
\item The vertices are drawn from left to right in the order $p^{-1}(1),\ldots,p^{-1}(n)$, so the labeling $p$ is encoded in the drawing.
\item Each vertex $p^{-1}(i)$ is represented by the symbol $\delta_i\in \{\noneCirc, \upCirc, \downCirc, \upDownCirc\}$, so the decoration is encoded in the drawing. 
\item We draw a vertical red wall emanating up from each vertex $v$ such that $\delta_{p(v)}\in \{\upCirc,\upDownCirc\}$. We draw a vertical red wall emanating down from each vertex $v$ such that $\delta_{p(v)}\in \{\downCirc,\upDownCirc\}$.  
\item We draw extra strands that extend up or down infinitely when a vertex does not have as many parents or children as its symbol permits it to have. 
\end{itemize}

When $v$ has a red wall emanating up or down from it, we say it \dfn{emits} the red wall. The last four bulleted items in Definition~\ref{DefPermutrees} imply that edges in a permutree cannot cross through red walls. 

A \dfn{linear extension} of an $n$-element poset $(Q,\leq_Q)$ is a bijection $\sigma: Q\to[n]$ such that $\sigma(x)\leq\sigma(y)$ whenever $x\leq_Q y$. Associated to a permutree $T$ is a natural partial order $\preceq$ on the vertex set $V(T)$ obtained by declaring $u\preceq v$ whenever there is a directed path (possibly with no edges) from $u$ to $v$. A \dfn{decreasing permutree} is a pair $(T,\sigma)$, where $T$ is a permutree with $n$ vertices and $\sigma:V(T)\to[n]$ is a linear extension. We think of $\sigma$ as a labeling of the vertices of $T$, and we often write $\mathcal T$ for the pair $(T,\sigma)$. When we draw a decreasing permutree, we place the element with label $j$ at height $j$. 

Consider the following \dfn{insertion algorithm}. Fix a decoration $\delta=\delta_1\cdots\delta_n\in \{\noneCirc, \upCirc, \downCirc, \upDownCirc\}^n$. Consider a permutation $w\in S_n$. Draw the plot of $w$ with each point $(i,w(i))$ represented by the symbol $\delta_i$. Draw a vertical red wall emanating up from each point represented by $\upCirc$ or $\upDownCirc$. Draw a vertical red wall emanating down from each point represented by $\downCirc$ or $\upDownCirc$. Between any two downward red walls, and on the far right and the far left, draw an infinite incoming strand. Now move up the points in the plot from bottom to top. At each step, a point represented by $\noneCirc$ or $\upCirc$ attaches to the only incoming strand that it sees, while a point represented by $\downCirc$ or $\upDownCirc$ attaches to both of the incoming strands that it sees. Furthermore, a point represented by $\noneCirc$ or $\downCirc$ expels $1$ upward strand, while a point represented by $\upCirc$ or $\upDownCirc$ expels two upward strands. This procedure finishes with an infinite upward strand between any two consecutive upward red walls and on the far right and far left.

\begin{figure}[ht]
  \begin{center}{\includegraphics[height=8.985cm]{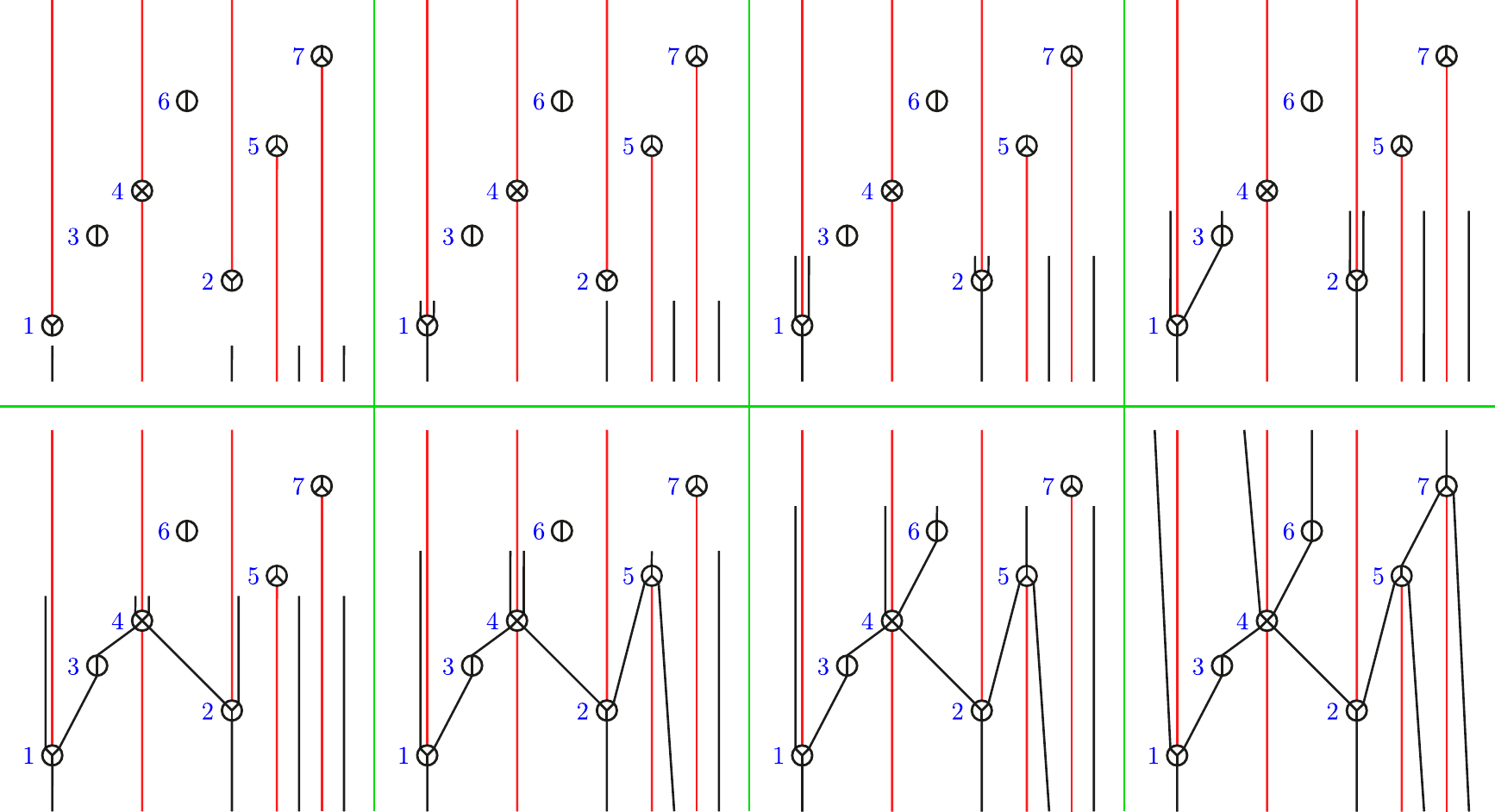}}
  \end{center}
  \caption{The insertion algorithm.}\label{FigCoxx2}
\end{figure}

The end result of the insertion algorithm is a decreasing $\delta$-permutree such that the vertex $p^{-1}(i)$ is the point $(i,w(i))$, which has label $w(i)$. Pilaud and Pons \cite{Pilaud} proved that this algorithm defines a bijection from $S_n$ to the set of decreasing $\delta$-permutrees. The inverse of this bijection is easier to describe. The permutation $w$ is obtained from the decreasing permutree $\mathcal T$ by reading the labels of the vertices $p^{-1}(1),\ldots,p^{-1}(n)$ in this order; this is called the \dfn{in-order reading} of $\mathcal T$. We let $\mathcal I_\delta(\mathcal T)$ denote the in-order reading of a decreasing $\delta$-permutree $\mathcal T$. In other words, if $\mathcal T=(T,\sigma)$, then $\mathcal I_{\delta}(\mathcal T)=\sigma\circ p^{-1}=w$. 

\begin{example}
Suppose $w=1346257$ and $\delta=\upCirc\noneCirc\upDownCirc\noneCirc\upCirc\downCirc\downCirc$. The first step of the insertion algorithm is shown in the top left corner of Figure~\ref{FigCoxx2}. The final result of the algorithm is the decreasing $\delta$-permutree $\mathcal I_\delta^{-1}(w)=(T,\sigma)$ shown in the bottom right of the figure. The label $\sigma(v)$ of each vertex $v$ (which is also its height) is written next to $v$. 
\end{example}

Define the \dfn{skeleton} of a decreasing permutree $\mathcal T=(T,\sigma)$, denoted $\skel(\mathcal T)$, to be the underlying permutree $T$. Using the in-order reading bijection $\mathcal I_\delta$, we can define the \dfn{$\delta$-skeleton} of a permutation $w$, denoted $\skel_\delta(w)$, to be the $\delta$-permutree $\skel(\mathcal I_\delta^{-1}(w))$. The \dfn{$\delta$-permutree congruence} is the equivalence relation $\equiv_\delta$ on $S_n$ defined by saying $v\equiv_\delta w$ if and only if $\skel_\delta(v)=\skel_\delta(w)$. Pilaud and Pons \cite{Pilaud} proved that the $\delta$-permutree congruence is a lattice congruence on the left weak order of $S_n$. Therefore, we can consider the Coxeter stack-sorting operator ${\bf S}_{\equiv_\delta}:S_n\to S_n$ defined by ${\bf S}_{\equiv_\delta}(w)=w\left(\pi_\downarrow^{\equiv_\delta}(w)\right)^{-1}$. We call such a map a \dfn{permutree stack-sorting operator}. 

\begin{example}\label{ExamCoxx1}
Consider the following special choices of the decoration $\delta$: 
\begin{enumerate}[(I)]
\item If $\delta=\noneCirc^n$, then the map $\skel_\delta$ is a bijection from $S_n$ to the set of $\delta$-permutrees. In this case, the $\delta$-permutree congruence is the equality congruence given by $v\equiv_\delta w$ if and only if $v=w$. The Coxeter stack-sorting operator ${\bf S}_{\equiv_\delta}$ is the constant map that sends every permutation in $S_n$ to the identity permutation $e$. 

\medskip

\item If $\delta=\downCirc^n$, then $\delta$-permutrees are the same as binary plane trees. In this case, the $\delta$-permutree congruence is the \dfn{sylvester congruence} $\equiv_{\syl}$. It follows from \cite[Corollary~16]{DefantPolyurethane} that the Coxeter stack-sorting operator ${\bf S}_{\equiv_\delta}$ is the same as West's stack-sorting map $\stack$.

\medskip

\item If $\delta\in \{\upCirc,\downCirc\}^n$, then $\delta$-permutrees are called \dfn{Cambrian trees} \cite{ChatelPilaud}. The $\delta$-permutree congruences for $\delta\in\{\upCirc,\downCirc\}^n$ are precisely the Cambrian congruences on $S_n$, which were introduced by Reading \cite{ReadingCambrian}. The combinatorial properties of a Coxeter stack-sorting operator associated to a Cambrian congruence $\equiv_\delta$ depends on the particular decoration $\delta$. 

\medskip

\item If $\delta=\upDownCirc^n$, then $\delta$-permutrees are essentially the same as binary sequences. In this case, the $\delta$-permutree congruence is the descent congruence on $S_n$, and the Coxeter stack-sorting operator ${\bf S}_{\equiv_\delta}$ is the pop-stack-sorting map (see \cite{DefantCoxeterPop, Pilaud}). \qedhere
\end{enumerate}
\end{example}

We end this subsection with some terminology that will help when we deal with postorder readings of decreasing permutrees later. Let $\delta$ be a decoration, and let $\mathcal T=(T,\sigma)$ be a decreasing $\delta$-permutree with associated linear ordering $p:V(T)\to [n]$. Consider two vertices $x,y\in V(T)$ with $p(x)<p(y)$. If $z$ is a vertex represented by either $\upCirc$ or $\upDownCirc$, then it emits an upward red wall. We say this wall \dfn{separates} $x$ and $y$ if $p(x)<p(z)<p(y)$ and $\sigma(z)<\min\{\sigma(x),\sigma(y)\}$. Similarly, if $z'$ is a vertex represented by either $\downCirc$ or $\upDownCirc$, then it emits a downward red wall. We say this wall separates $x$ and $y$ if $p(x)<p(z')<p(y)$ and $\sigma(z')>\max\{\sigma(x),\sigma(y)\}$. The vertices $x$ and $y$ are comparable in the poset $(V(T),\preceq)$ if and only if they are not separated by any red walls. 

\subsection{Postorders}

Let $(Q,\leq_Q)$ be an $n$-element poset whose elements are endowed with a linear ordering via a bijective function $p:Q\to [n]$. Define the \dfn{postorder} of $Q$ to be the unique linear extension $\sigma_{\post}:Q\to [n]$ such that $\sigma_{\post}(x)<\sigma_{\post}(y)$ whenever $x,y\in Q$ are incomparable (with respect to $\leq_Q$) and satisfy $p(x)<p(y)$. To visualize this definition, imagine that we draw the Hasse diagram of $Q$ in the plane so that $p(v)$ is the horizontal coordinate of $v$ for every $v\in Q$. Then $\sigma_{\post}$ is obtained by listing the elements of $Q$ in a greedy fashion so that at each step, we write down the leftmost element that has not yet been listed and that is not above (in the Hasse diagram) any other elements that have not been listed. For example, consider the poset $Q$ in Figure~4. We have named the elements $v_1,\ldots,v_7$ so that $p(v_i)=i$ for all $i$. The list $\sigma_{\post}^{-1}(1),\ldots,\sigma_{\post}^{-1}(7)$ of the elements in postorder is $v_1,v_2,v_5,v_3,v_4,v_6,v_7$.  

\begin{figure}[ht]
  \centering
  {\includegraphics[height=3.231cm]{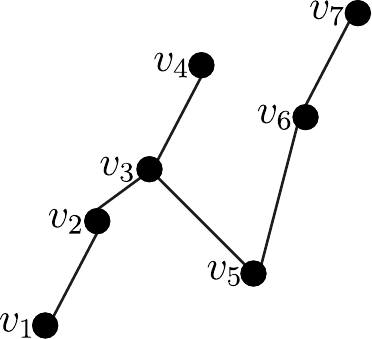}}
  \caption{A poset $Q$ together with a labeling $p$ given by $p(v_i)=i$.}\label{Fig4}
\end{figure}

If $\sigma:Q\to [n]$ is an arbitrary linear extension of $Q$, then we define the \dfn{postorder reading} of $\sigma$ to be the permutation $\mathcal P(\sigma)=\sigma\circ \sigma_{\post}^{-1}\in S_n$. In other words, if we think of $\sigma$ as giving a labeling of the elements of $Q$, then the postorder reading of $\sigma$ is obtained by reading the labels in the order specified by $\sigma_{\post}$. 

Let $\delta\in\{\noneCirc,\upCirc,\downCirc,\upDownCirc\}^n$ be a decoration. As mentioned above, associated to each $\delta$-permutree $T$ is a poset $(V(T),\preceq)$ endowed with a linear ordering of its elements given by a bijection $p:V(T)\to[n]$. Therefore, it makes sense to consider the postorder $\sigma_{\post}$ of $V(T)$. If $\mathcal T=(T,\sigma)$ is a decreasing $\delta$-permutree, then it makes sense to consider the postorder reading $\mathcal P(\mathcal T)$, which is simply the postorder reading $\mathcal P(\sigma)$ as defined above. For a concrete example, let $\mathcal T$ be the decreasing permutree shown in the bottom right corner of Figure~\ref{FigCoxx2}. The poset $(V(T),\preceq)$ and its labeling $p$ are the same as the poset and the labeling in Figure~4. The postorder reading is $\mathcal P(\mathcal T)=1324657$.  

We now know how to define the postorder reading of a decreasing permutree. If $\delta=\downCirc^n$, then a decreasing $\delta$-permutree is the same thing as a decreasing binary plane tree, and the postorder reading defined here agrees with the standard postorder reading (see \cite{Bona, DefantPolyurethane, DefantTroupes}). One definition of the stack-sorting map, which is responsible for much of the structure underlying it (such as its connection with free probability theory \cite{DefantTroupes} and nestohedra \cite{DefantFertilitopes}), combines the in-order reading with the postorder reading. More precisely, the stack-sorting map is given by $\stack=\mathcal P\circ\mathcal I_{\downCirc^n}^{-1}$ (see \cite{Bona, DefantPolyurethane, DefantTroupes}). The following proposition shows that we can generalize this fact, providing a useful combinatorial model for working with permutree stack-sorting operators. 

\begin{proposition}\label{PropCoxx1}
Let $\delta\in\{\noneCirc,\upCirc,\downCirc,\upDownCirc\}^n$ be a decoration, and let $\equiv_\delta$ be the corresponding permutree congruence on $S_n$. The permutree stack-sorting operator ${\bf S}_{\equiv_\delta}$ satisfies the identity \[{\bf S}_{\equiv_\delta}=\mathcal P\circ\mathcal I_\delta^{-1}.\]
\end{proposition}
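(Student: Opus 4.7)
The plan is to compute $\pi_\downarrow^{\equiv_\delta}(w)$ explicitly in terms of the decreasing permutree $\mathcal{I}_\delta^{-1}(w)=(T,\sigma)$, and then deduce the identity algebraically. Write $p\colon V(T)\to[n]$ for the labeling of $T$. Via the bijection $\mathcal{I}_\delta$, the congruence class of $w$ is exactly
\[[w]_{\equiv_\delta}=\{\tau\circ p^{-1}\colon \tau\text{ a linear extension of }(V(T),\preceq)\},\]
and the key claim I would establish is that the minimum of this class in the left weak order is $\sigma_{\post}\circ p^{-1}$, where $\sigma_{\post}$ denotes the postorder linear extension of $(V(T),\preceq)$ with respect to $p$.

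Granting the claim, the proposition drops out of a single calculation. Since $w=\sigma\circ p^{-1}$ and $\pi_\downarrow^{\equiv_\delta}(w)=\sigma_{\post}\circ p^{-1}$, we have
\[{\bf S}_{\equiv_\delta}(w)=w\cdot\left(\pi_\downarrow^{\equiv_\delta}(w)\right)^{-1}=(\sigma\circ p^{-1})\circ(p\circ\sigma_{\post}^{-1})=\sigma\circ\sigma_{\post}^{-1}=\mathcal{P}(\sigma)=\mathcal{P}(\mathcal{I}_\delta^{-1}(w)).\]

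To prove the claim, I would use the criterion recalled in Section~\ref{SecCoxeter} that $v\leq_L v'$ in $S_n$ if and only if every right inversion of $v$ is a right inversion of $v'$. A direct substitution shows that for any linear extension $\tau$ the right inversions of $\tau\circ p^{-1}$ correspond to the pairs $(x,y)\in V(T)\times V(T)$ satisfying $p(x)<p(y)$ and $\tau(x)>\tau(y)$. Hence it suffices to check that any pair $(x,y)$ with $p(x)<p(y)$ and $\sigma_{\post}(x)>\sigma_{\post}(y)$ also satisfies $\tau(x)>\tau(y)$ for every other linear extension $\tau$ of $(V(T),\preceq)$. If $x$ and $y$ are comparable under $\preceq$, the linear extension property of $\sigma_{\post}$ forces $y\prec x$, and then any linear extension $\tau$ must also satisfy $\tau(y)<\tau(x)$. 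If $x$ and $y$ are incomparable, the defining property of postorder---that $\sigma_{\post}(x)<\sigma_{\post}(y)$ if and only if $p(x)<p(y)$ for incomparable pairs---directly contradicts the hypotheses, so this case is vacuous. The main technical point is this identification of postorder as the canonical smallest linear extension in the class; once it is in place, everything else is formal.
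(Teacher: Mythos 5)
Your proof is correct, and it reaches the key claim --- that $\pi_\downarrow^{\equiv_\delta}(w)=\sigma_{\post}\circ p^{-1}$ --- by a genuinely different route than the paper. The paper starts from the minimal element $v=\pi_\downarrow^{\equiv_\delta}(w)$, writes it as $\sigma_v\circ p^{-1}$, and argues by contradiction: if $\sigma_v\neq\sigma_{\post}$, it produces an incomparable pair out of $p$-order, reduces to an \emph{adjacent} such pair using the red-wall separation structure of permutrees, and then swaps that pair to get a strictly smaller element of the same class, contradicting minimality. You instead show directly that $\sigma_{\post}\circ p^{-1}\leq_L\tau\circ p^{-1}$ for every linear extension $\tau$, by checking that every right inversion of $\sigma_{\post}\circ p^{-1}$ is forced in $\tau\circ p^{-1}$ (the comparable case by the linear-extension property, the incomparable case being vacuous by the definition of postorder). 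Your argument is more elementary and more general: it is purely order-theoretic, uses nothing about permutrees beyond the well-definedness of the postorder (which both you and the paper take as given), and exhibits the minimum outright rather than relying on its prior existence. What the paper's route buys is a reusable local mechanism --- the reduction to adjacent incomparable pairs separated by a common red wall --- which reappears in its later arguments in types $B$ and $\widetilde A$; your route buys a cleaner statement that the postorder is the minimum linear extension in the inversion-containment order for any labeled poset admitting a postorder. Both proofs share the same starting identification of the congruence class with $\{\tau\circ p^{-1}:\tau\text{ a linear extension of }(V(T),\preceq)\}$ and the same concluding one-line computation.
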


\begin{proof}
Choose $w\in S_n$, and let $v=\pi_\downarrow^{\equiv_\delta}(w)$. Let $\mathcal I_{\delta}^{-1}(w)=\mathcal T=(T,\sigma)$. Then $T$ is a $\delta$-permutree, so it has an associated linear ordering of its vertices given by a bijection $p:V(T)\to[n]$. Let $\sigma_{\post}:V(T)\to[n]$ be the postorder of $V(T)$. The set of permutations of the form $\sigma'\circ p^{-1}$ for $\sigma'$ a linear extension of $V(T)$ is precisely the congruence class of $\equiv_\delta$ containing $w$. In particular, there are linear extensions $\sigma_w:V(T)\to[n]$ and $\sigma_v:V(T)\to[n]$ such that $w=\sigma_w\circ p^{-1}$ and $v=\sigma_v\circ p^{-1}$. 

We claim that $\sigma_v=\sigma_{\post}$. Suppose otherwise. Let $\Theta$ be the set of pairs $(x,y)\in V(T)\times V(T)$ such that $x$ and $y$ are incomparable in $(V(T),\preceq)$, $p(x)<p(y)$, and $\sigma_v(x)>\sigma_v(y)$. The assumption that $\sigma_v\neq\sigma_{\post}$ says that $\Theta\neq\emptyset$. If $(x,y)\in\Theta$, then $x$ and $y$ are separated by a red wall. Either $x$ and $\sigma_v^{-1}(\sigma_v(x)-1)$ are separated by the same red wall or $\sigma_v^{-1}(\sigma_v(x)-1)$ and $y$ are separated by the same red wall. It follows that one of the pairs $(x,\sigma_v^{-1}(\sigma_v(x)-1))$ or $(\sigma_v^{-1}(\sigma_v(x)-1),y)$ is in $\Theta$. By repeating this argument, we eventually find that there exists a pair $(x_0,y_0)\in\Theta$ with $\sigma_v(x_0)-\sigma_v(y_0)=1$. Letting $i=\sigma_v(y_0)$, we find that $s_i\circ \sigma_v$ is a linear extension of $V(T)$, so the permutation $s_iv=s_i\circ\sigma_v\circ p^{-1}$ is in the same $\equiv_\delta$-congruence class as $w$. However, $s_iv<_Lv$, and this contradicts the definition of $v$. 

We have shown that $v=\sigma_{\post}\circ p^{-1}$. Consequently, \[{\bf S}_{\equiv_\delta}(w)=w\circ v^{-1}=(\sigma_w\circ p^{-1})\circ(\sigma_{\post}\circ p^{-1})^{-1}=\sigma_w\circ\sigma_{\post}^{-1}=\mathcal P(\sigma_w)=\mathcal P(\mathcal I_\delta^{-1}(w)).\qedhere\]  
\end{proof}

\begin{figure}[ht]
  \begin{center}{\includegraphics[height=4.226cm]{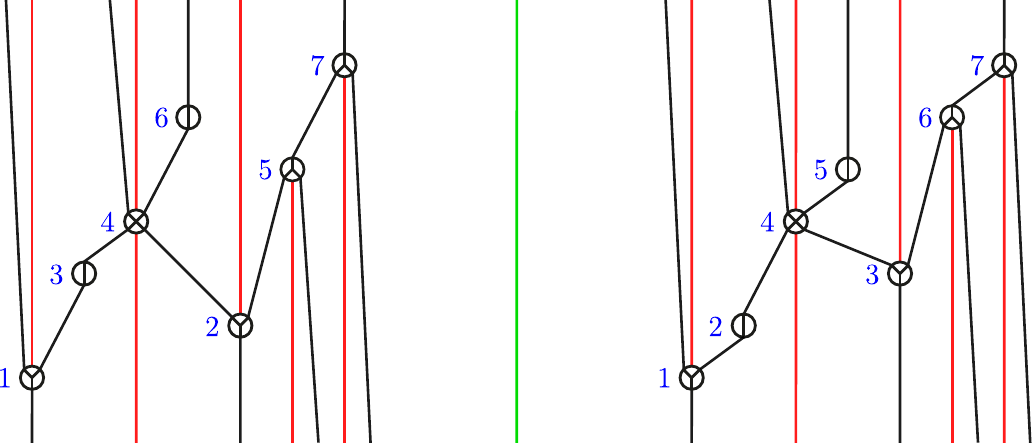}}
  \end{center}
  \caption{Decreasing $\delta$-permutrees with in-order readings $w=1346257$ (left) and $v=\pi_\downarrow^{\equiv_\delta}(w)=1245367$ (right).}\label{FigCoxx3}
\end{figure}

\begin{example}
Let $w=1346257$ and $\delta=\upCirc\noneCirc\upDownCirc\noneCirc\upCirc\downCirc\downCirc$. The decreasing $\delta$-permutree $\mathcal I_\delta^{-1}(w)$ is shown on the left in Figure~\ref{FigCoxx3}. On the right side of Figure~\ref{FigCoxx3} is $\mathcal I_\delta^{-1}(v)$, where $v=\pi_\downarrow^{\equiv_\delta}(w)=1245367$. Notice that for each vertex $x$, the label $\sigma_v(x)$ in the tree on the right is the same as $\sigma_{\post}(x)$. The postorder reading of the decreasing $\delta$-permutree on the left is $1324657$, which is equal to the permutation $wv^{-1}={\bf S}_{\equiv_\delta}(w)$.  
\end{example}

\section{Coxeter Stack-Sorting in Type $B$}\label{SecTypeB}

The Coxeter groups of type $B$ are the \dfn{hyperoctahedral groups} $B_n$, which are defined as follows. Consider the automorphism $\alpha$ of $S_{2n}$ given by $\alpha(w)=w_0ww_0$, where $w_0=(2n)(2n-1)\cdots 321$. This automorphism has the effect of rotating the plot of $w$ by $180^\circ$. The subgroup of $S_{2n}$ consisting of the permutations fixed by this automorphism is $B_n$. If we let $s_i=(i\,\,i+1)$ be the $i^\text{th}$ simple generator of $S_{2n}$, then the simple generators of $B_n$ are $s_1^B,\ldots,s_n^B$, where $s_i^B=s_is_{2n-i}$ for $i\in[n-1]$ and $s_n^B=s_n$. The simple generator $s_i^B$ is a right descent of an element $w\in B_n$ if and only if $w(i)>w(i+1)$ (equivalently, $w(2n-i)>w(2n-i+1)$). The automorphism $\alpha$ is a lattice automorphism of the left weak order on $S_{2n}$, so $(B_n,\leq_L)$ is a sublattice of $(S_{2n},\leq_L)$. Similarly, $(B_n,\leq_R)$ is a sublattice of $(S_{2n},\leq_R)$. 

Define the \dfn{complement} of a symbol $\Omega\in\{\noneCirc,\upCirc,\downCirc,\upDownCirc\}$ to be the symbol $\mho$ obtained by turning $\Omega$ upside-down. More precisely, $\mho=\Omega$ if $\Omega\in\{\noneCirc,\upDownCirc\}$, and $\mho$ is the unique element of $\{\upCirc,\downCirc\}\setminus\{\Omega\}$ if $\Omega\in\{\upCirc,\downCirc\}$. Let us say a decoration $\delta=\delta_1\cdots\delta_{2n}\in\{\noneCirc,\upCirc,\downCirc,\upDownCirc\}^{2n}$ is \dfn{antisymmetric} if for every $i\in[2n]$, the symbol $\delta_{2n+1-i}$ is the complement of $\delta_i$. For example, $\upCirc\downCirc\noneCirc\noneCirc\upCirc\downCirc$ is antisymmetric. We define a \dfn{centrally symmetric permutree} to be a permutree (with an even number of vertices) whose decoration is antisymmetric. A \dfn{decreasing centrally symmetric permutree} is a decreasing $\delta$-permutree $\mathcal T$ such that $\delta$ is antisymmetric and $\mathcal I_\delta(\mathcal T)\in B_n$ for some $n$. Given an antisymmetric decoration $\delta\in \{\noneCirc,\upCirc,\downCirc,\upDownCirc\}^{2n}$, define the \dfn{centrally symmetric $\delta$-permutree congruence} on $(B_n,\leq_L)$, denoted $\equiv_\delta^B$, to be the restriction of the $\delta$-permutree congruence $\equiv_\delta$ on $S_{2n}$ to $B_n$. Since $(B_n,\leq_L)$ is a sublattice of $(S_{2n},\leq_L)$, the equivalence relation $\equiv_\delta^B$ is a genuine lattice congruence on the left weak order of $B_n$. 

Observe that if $\delta$ is antisymmetric, then, in the notation of \eqref{EqCoxx1}, $\alpha(\equiv_\delta)$ is equal to $\equiv_\delta$. Therefore, it follows from \eqref{EqCoxx1} that $\pi_\downarrow^{\equiv_\delta}\circ\alpha=\alpha\circ\pi_\downarrow^{\equiv_\delta}$. This implies that if $w\in B_n$, then $\pi_\downarrow^{\equiv_\delta}(w)\in B_n$, so $\pi_\downarrow^{\equiv_\delta}(w)=\pi_\downarrow^{\equiv_\delta^B}(w)$. In other words, the minimal element of the $\delta$-permutree congruence class in $S_{2n}$ containing $w$ is the same as the minimal element of the centrally symmetric $\delta$-permutree congruence class in $B_n$ containing $w$. Hence, 
\begin{equation}\label{EqCoxx3}
{\bf S}_{\equiv_\delta}(w)={\bf S}_{\equiv_\delta^B}(w).
\end{equation} 
This fact is useful because it means we will be able to use Proposition~\ref{PropCoxx1} to compute ${\bf S}_{\equiv_\delta^B}(w)$ as a postorder reading. In symbols, we have 
\begin{equation}\label{EqCoxx2}
{\bf S}_{\equiv_\delta^B}(w)=\mathcal P\circ\mathcal I_\delta^{-1}(w)
\end{equation} for all $w\in B_n$.  

\begin{remark}
As mentioned in \cite{Pilaud}, the Cambrian congruences on $S_n$ are the $\delta$-permutree congruences with $\delta\in\{\upCirc,\downCirc\}^n$. One can show that the type-$B$ Cambrian congruences on $B_n$ discussed in \cite{ReadingCambrian} are precisely the centrally symmetric permutree congruences $\equiv_\delta^B$ with $\delta\in\{\upCirc,\downCirc\}^{2n}$ (and with $\delta$ antisymmetric). 
\end{remark} 

The quotient of the left weak order on $B_n$ by the lattice congruence $\equiv_{\upCirc^n\downCirc^n}^B$ is one of the Cambrian lattices that Reading called a \emph{type-$B$ Tamari lattice} due to its similarities with the classical $n^\text{th}$ Tamari lattice \cite{ReadingCambrian}. Since the $n^\text{th}$ Tamari lattice is the quotient of $(S_n,\leq_L)$ by the sylvester congruence, it is natural to call $\equiv_{\upCirc^n\downCirc^n}^B$ the \dfn{type-$B$ sylvester congruence} on $B_n$. We devote the remainder of this section to studying the Coxeter stack-sorting operator $\stack_B={\bf S}_{\equiv_{\upCirc^n\downCirc^n}^B}$ associated to the type-$B$ sylvester congruence, which one can view as a canonical type-$B$ analogue of West's stack-sorting map. 

\medskip

Our first theorem in this section regards the forward orbits of the dynamical system $\stack_B:B_n\to B_n$. Proposition~\ref{PropCox1} tells us that the size of every such forward orbit is at most the Coxeter number of $B_n$, which is $2n$. We will improve this result by showing that these forward orbits actually all have size at most $n+1$, and we will demonstrate that this bound is tight.

\begin{theorem}\label{ThmCoxx7}
For every $n\geq 1$, we have \[\max_{w\in B_n}\left\lvert O_{\stack_B}(w)\right\rvert =n+1.\]
\end{theorem}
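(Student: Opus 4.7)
The theorem has two parts: the upper bound $|O_{\stack_B}(w)|\leq n+1$ and a matching lower bound. Proposition~\ref{PropCox1} already gives the weaker bound $|O_{\stack_B}(w)|\leq 2n$, so the real work is closing the gap by a factor of roughly two.

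For the upper bound, my plan is to exhibit a statistic $\mu:B_n\to\{0,1,\ldots,n\}$ with $\mu(e)=0$, $\mu(w)>0$ for $w\neq e$, and $\mu(\stack_B(w))<\mu(w)$ whenever $w\neq e$; any such statistic immediately forces $|O_{\stack_B}(w)|\leq\mu(w)+1\leq n+1$. To locate $\mu$, I would combine the postorder description $\stack_B=\mathcal P\circ\mathcal I_{\upCirc^n\downCirc^n}^{-1}$ from~\eqref{EqCoxx2} with the observation that, since $w\in B_n$ and the decoration $\upCirc^n\downCirc^n$ is antisymmetric, the decreasing $\upCirc^n\downCirc^n$-permutree of $w$ carries a $180^\circ$-rotation symmetry. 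A natural first candidate for $\mu(w)$ is a ``symmetric depth'' of this tree, namely the length of the longest chain in $(V(T),\preceq)$ modulo the rotation. Alternatively, one can try to adapt the argument of~\cite[Theorem~1.4]{DefantCoxeterPop}: that proof builds a $\leq_R$-chain of length at most $h=2n$ witnessing the orbit bound, and one would pair consecutive elements of the chain under the central symmetry so that its effective length drops to at most $n$.

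For the lower bound, I would produce an explicit family $w_n\in B_n$ with $|O_{\stack_B}(w_n)|=n+1$. Direct computation using the insertion algorithm of Section~\ref{SecPermutrees} shows that $w_2=3142\in B_2$ works, giving the orbit $3142\to 1324\to e$ of length $3$. For general $n$ I would seek a family whose decreasing $\upCirc^n\downCirc^n$-permutree is a single chain of depth $n$ in $(V(T),\preceq)$; chain-shaped permutrees have easily computed postorder readings, and a well-chosen family should satisfy $\stack_B(w_n)=w_{n-1}$ under a natural embedding $B_{n-1}\hookrightarrow B_n$, so that induction on $n$ closes the argument.

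The main obstacle is the upper bound. The Coxeter-number argument that underlies Proposition~\ref{PropCox1} is type-agnostic and cannot distinguish $B_n$ from any other Coxeter group of Coxeter number $2n$, so bringing the bound down to $n+1$ genuinely requires exploiting the antisymmetric decoration or, equivalently, the $180^\circ$ symmetry of the associated permutrees. Once the right $\mu$ (or the right symmetric refinement of the chain construction from \cite{DefantCoxeterPop}) is identified, the lower bound should follow from a patient but essentially mechanical verification on the candidate family.
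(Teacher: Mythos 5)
Your overall strategy for the upper bound --- exhibit a statistic on $B_n$ taking at most $n+1$ values that moves strictly monotonically along every orbit until it reaches $e$ --- is exactly the shape of the paper's argument, but neither of the concrete statistics you propose works, and the actual statistic is the missing heart of the proof. Your first candidate, the length of the longest chain of $(V(T),\preceq)$ modulo the $180^\circ$ rotation, already fails in $B_2$: for $w=1324$ the poset $(V(T),\preceq)$ is a $4$-element chain, exactly as it is for $e=1234$, so the statistic does not move at the step $1324\mapsto e$ even though $1324\neq e$; consequently it would certify the false bound $\lvert O_{\stack_B}(3142)\rvert\leq 2$ (the true orbit is $3142\to 1324\to e$). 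Your second candidate is not an argument: the chain in the proof of \cite[Theorem~1.4]{DefantCoxeterPop} lives in the right weak order on $B_n$, whose elements are already centrally symmetric, so there is nothing to ``pair up'' and no mechanism for halving its length. The paper's statistic is quite different: writing $v_t=\stack_B^t(w)$, it takes $Z_t'$ to be the set of $m\in[n]$ such that every entry appearing to the left of $m$ in $v_t$ is smaller than $m$, notes $Z_t'\subseteq Z_{t+1}'$ (from $v_{t+1}\leq_R v_t$), and then proves $Z_t'\subsetneq Z_{t+1}'$ whenever $v_t\neq e$ by a two-case analysis (according to whether $Z_t'$ equals $Z_t=\{m\in[n]:v_t^{-1}(m)\in[n]\}$) that uses the postorder description \eqref{EqCoxx2} and the absence of separating red walls. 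Identifying and verifying such a statistic is the real content, and your proposal does not supply it.

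The lower bound is also incomplete. You verify the single case $3142\in B_2$, but the general family is only wished for, and the guiding principle you state --- take $w_n$ whose permutree poset $(V(T),\preceq)$ is a single chain --- is self-defeating: if $(V(T),\preceq)$ is a chain, it has a unique linear extension, so $\pi_\downarrow(w_n)=w_n$ and $\stack_B(w_n)=e$, giving an orbit of size $2$ (indeed $1324$ is such a permutation). You would also need to justify that an embedding $B_{n-1}\hookrightarrow B_n$ intertwines $\stack_B$ with itself, which is not automatic since the decorations $\upCirc^{n-1}\downCirc^{n-1}$ and $\upCirc^{n}\downCirc^{n}$ differ in the middle. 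The paper instead writes down the explicit family $u=(2n)23\cdots(2n-2)(2n-1)1$ and checks directly that each application of $\stack_B$ moves the entry $2n$ one step to the right and the entry $1$ one step to the left, so that $\stack_B^{n-1}(u)\neq e$.
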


\begin{proof}
Fix $w\in B_n$. To ease notation, let $v_t=\stack_B^t(w)$. Let $Z_t=\{m\in[n]:v_t^{-1}(m)\in[n]\}$. Since $v_t\in B_n$, we can describe $Z_t$ equivalently as the set of entries $m\in[n]$ such that $(m,2n+1-m)$ is not a left inversion of $v_t$ (viewing $v_t$ as an element of $S_{2n}$). Let $Z_t'$ be the set of integers $m\in [n]$ such that every entry appearing to the left of $m$ in $v_t$ is smaller than $m$. Equivalently, an entry $m\in [n]$ is in $Z_t'$ if and only if $v_t$ has no left inversions of the form $(m,b)$ with $m<b$; this implies that $Z_t'\subseteq Z_t$. It is immediate from Definition~\ref{DefCoxStackOp} (or alternatively, Proposition~\ref{PropCox1}) that $v_t\geq_R v_{t+1}$. This means that every left inversion of $v_{t+1}$ is a left inversion of $v_t$, so $Z_t\subseteq Z_{t+1}$ and $Z_t'\subseteq Z_{t+1}'$. 

Suppose $v_t\neq e$. Let $\delta=\upCirc^n\downCirc^n$, and recall that $v_{t+1}=\stack_B(v_t)={\bf S}_{\equiv_\delta^B}(v_t)=\mathcal P(\mathcal I_\delta^{-1}(w))$, where the last equality comes from \eqref{EqCoxx2}. We are going to prove that $Z_{t+1}'\setminus Z_t'\neq\emptyset$. We consider two cases. 

\medskip

\noindent {\bf Case 1:} Suppose $Z_t'\neq Z_t$. Let $a=\min(Z_t\setminus Z_t')$. We want to show that $a\in Z_{t+1}'$. Assume this is not the case. Then there exists an entry $b>a$ such that $(a,b)$ is a left inversion of $v_{t+1}$. Since $v_t\geq_R v_{t+1}$, the pair $(a,b)$ must also be a left inversion of $v_t$. Because $a$ is the smallest element of $Z_t\setminus Z_t'$, there cannot be an entry $a'<a$ appearing between $b$ and $a$ in $v_t$. This implies that there is no upward red wall separating the vertices with labels $b$ and $a$ in the decreasing permutree $\mathcal I_\delta^{-1}(v_t)$. There is also no downward red wall separating these two vertices because all vertices appearing horizontally between them are represented by the symbol $\upCirc$. It follows that the vertex with label $a$ is less than the vertex with label $b$ in the partial order $\preceq$, so $(a,b)$ is not a left inversion of $\mathcal P(\mathcal I_\delta^{-1}(w))=v_{t+1}$. This is a contradiction, so $a\in Z_{t+1}'\setminus Z_t'$. 

\medskip

\noindent {\bf Case 2:} Suppose $Z_t'=Z_t$. This means that every entry $m\in[n]$ such that $v_t^{-1}(m)\in[n]$ is greater than every entry to its left in $v_t$. In particular, this implies that if $m\in Z_t$, then every entry to the left of $m$ in $v_t$ is in $Z_t$. Hence, $Z_t=\{v_t(1),\ldots,v_t(k)\}$ for some $k$. Furthermore, $v_t(1)<\cdots<v_t(k)$. The assumption that $v_t\neq e$ forces $k<n$, so $v_t(n)\not\in Z_t$. This means that $v_t(n)\geq n+1$. Let $a=v_t(n+1)$. Because $v_t\in B_n$, we have $a=v_t(n+1)=2n+1-v_t(n)\leq n$. It follows that $s_n^B$ is a right descent of $v_t$. The map $\stack_B$ is compulsive by Proposition~\ref{PropCox1}, so we must have $v_{t+1}\leq_R v_ts_n^B$. Suppose, by way of contradiction, that $a\not\in Z_{t+1}'$. Then there exists a left inversion $(a,b)$ of $v_{t+1}$. Since $v_{t+1}\leq_R v_t$, the pair $(a,b)$ must also be a left inversion of $v_t$. Suppose there is an entry $a'<a$ appearing between $b$ and $a$ in $v_t$. Then $a'<a\leq n$ and $v_t^{-1}(a')<v_t^{-1}(a)=n+1$, so $a'\in Z_t=\{v_t(1),\ldots,v_t(k)\}$. Consequently, $b\in Z_t$. However, this is impossible because $v_1(1)<\cdots<v_t(k)$ and $b>a'$. This shows that no such entry $a'$ exists, so there is no upward red wall separating the vertices with labels $b$ and $a$ in the decreasing permutree $\mathcal I_\delta^{-1}(v_t)$. As in Case 1, there is also no downward red wall separating these two vertices, so we conclude in the same way as before that $(a,b)$ is not a left inversion of $\mathcal P(\mathcal I_\delta^{-1}(w))=v_{t+1}$. This is a contradiction, so $a\in Z_{t+1}'\setminus Z_t'$. 

\medskip 

We have demonstrated that if $t\geq 0$ is such that $v_t\neq e$, then $Z_t'\subsetneq Z_{t+1}'$. Let $j$ be the size of the forward orbit of $w$ under $\stack_B$. Then $v_0,v_1,\ldots,v_{j-2}$ are all not equal to $e$, so $Z_0'\subsetneq Z_1'\subsetneq\cdots\subsetneq Z_{j-1}'$. Hence, $j-1\leq \lvert Z_{j-1}'\rvert \leq n$. 

To complete the proof, we must exhibit a permutation $u\in B_n$ such that $\stack_B^{n-1}(u)\neq e$. Let \[u=(2n)23\cdots (2n-2)(2n-1)1.\] For $1\leq t\leq n-1$, it is straightforward to compute that $\stack_B^t(u)$ is obtained from $\stack_B^{t-1}(u)$ by shifting the entry $2n$ one space to the right and shifting the entry $1$ one space to the left. For example, if $n=4$, then $u=82345671$, $\stack_B(u)=28345617$, $\stack_B^2(u)=23845167$, and $\stack_B^3(u)=23481567$. For general $n$, we have $\stack_B^{n-1}(u)=23\cdots n(2n)1(n+1)(n+2)\cdots(2n-1)\neq e$. 
\end{proof}

Recall from \eqref{EqmaxDes} that if $w\in S_n$ is in the image of West's stack-sorting map $\stack$, then it has at most $\left\lfloor\frac{n-1}{2}\right\rfloor$ right descents. Our next goal is to understand the maximum number of right descents that an element of the image of $\stack_B$ can have. Recall from \eqref{EqCoxx3} that if $\delta=\upCirc^n\downCirc^n$, then $\stack_B:B_n\to B_n$, which is defined to be the Coxeter stack-sorting operator on $B_n$ associated to $\equiv_\delta^B$, is the restriction of the permutree stack-sorting operator ${\bf S}_{\equiv_\delta}:S_{2n}\to S_{2n}$ to $B_n$. Theorem~\ref{ThmCoxx1} tells us that if $w\in S_{2n}$, then ${\bf S}_{\equiv_\delta}(w)$ has at most $\left\lfloor\frac{2(2n-1)}{3}\right\rfloor$ right descents in $S_{2n}$. A permutation in $B_n$ with $k$ right descents in $B_n$ has either $2k$ or $2k-1$ right descents in $S_{2n}$. Therefore, it follows from Theorem~\ref{ThmCoxx1} that if $w\in B_n$, then $\stack_B(w)$ has at most $\left\lfloor\frac{2n}{3}\right\rfloor$ right descents in $B_n$. The next theorem improves upon this bound. In the remainder of this section, for $w\in B_n$, we write $D_R(w)$ for the right descent set of $w$ in $B_n$ and $\Des^A(w)$ for the set of indices $i\in[2n-1]$ such that $s_i$ is a right descent of $w$ in $S_{2n}$. 

\begin{theorem}\label{ThmCoxx2}
For each $n\geq 1$, we have \[\max_{w\in B_n}\left\lvert D_R\left(\stack_B(w)\right)\right\rvert =\left\lfloor\frac{n}{2}\right\rfloor.\]  
\end{theorem}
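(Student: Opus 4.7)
The plan is to establish matching upper and lower bounds. The upper bound $|D_R(\stack_B(w))| \leq \lfloor n/2 \rfloor$ is the substantive content; the lower bound will follow from an explicit construction.

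For the upper bound, I work via the embedding $B_n \hookrightarrow S_{2n}$ and the identity $\stack_B = {\bf S}_{\equiv_\delta}|_{B_n}$ from \eqref{EqCoxx3}, where $\delta = \upCirc^n\downCirc^n$. Fix $w \in B_n$ and set $v = \pi_\downarrow^{\equiv_\delta}(w) \in B_n$, $D = \Des^A(\stack_B(w))$, and $E = \mathcal{E}_{\equiv_\delta}(w)$. Since $\stack_B(w) \in B_n$, the set $D$ is centrally symmetric ($D = 2n - D$), and $|D_R(\stack_B(w))| = |D \cap [1, n]|$. Lemma~\ref{LemCoxx1} applied to $\equiv_\delta$ on $S_{2n}$ gives $|E| \leq 2n - 1 - |D|$, and the argument of Theorem~\ref{ThmCoxx1} shows that for each $i \in D$, either some position $r \in (v^{-1}(i), v^{-1}(i+1))$ has $v(r) < i$ (putting $i \in E$) or some such $r$ has $v(r) > i+1$ (putting $2n - i \in E$). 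The central symmetry of $v$ induces an involution $r \mapsto 2n + 1 - r$ on $[2n]$ that pairs witnesses of the two conditions at index $i$ with those at index $2n - i$. I plan to use this involution, together with a careful case analysis of when a pair $\{i, 2n - i\} \subseteq D$ contributes one or two elements to $E$, to derive a lower bound on $|E|$ which, combined with $|E| \leq 2n - 1 - |D|$, yields $|D_R(\stack_B(w))| \leq \lfloor n/2 \rfloor$.

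For the lower bound, I will construct $w_n \in B_n$ with $|D_R(\stack_B(w_n))| = \lfloor n/2 \rfloor$. A natural candidate specifies the values of $w_n$ on positions $1, \ldots, n$ in a sawtooth pattern such as $2, 1, 4, 3, 6, 5, \ldots$, then extends to positions $n+1, \ldots, 2n$ by central symmetry. Verification proceeds by applying the insertion algorithm to compute $\mathcal{I}_\delta^{-1}(w_n)$ and reading off the postorder via \eqref{EqCoxx2} to obtain $\stack_B(w_n)$; its right descent set should realize the claimed bound.

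The main obstacle is the upper bound. Direct application of Theorem~\ref{ThmCoxx1} to $\equiv_\delta$ on $S_{2n}$ yields only $|D| \leq \lfloor 2(2n-1)/3 \rfloor$, which translates to $|D_R(\stack_B(w))| \leq \lfloor 2n/3 \rfloor$ even after naively exploiting central symmetry. Bridging the gap from $\lfloor 2n/3 \rfloor$ to $\lfloor n/2 \rfloor$ requires extracting an additional constraint: most likely, that in extremal configurations a substantial fraction of the centrally symmetric pairs of descents must each contribute two elements to $E$, forcing $|E|$ to exceed the easy lower bound $|D \cap [1, n-1]| + [n \in D]$. This refinement may also exploit the specific structure of the decoration $\upCirc^n\downCirc^n$ through the fence-based description of $\equiv_\delta$ given by Theorem~\ref{ThmFences}, since the fences forced by this particular antisymmetric decoration control which edges of the Hasse diagram can be collapsed under the congruence.
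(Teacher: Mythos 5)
Your plan for the upper bound does not go through, and the gap is exactly at the step you flag as ``most likely'' to work. From Lemma~\ref{LemCoxx1} you have $\lvert E\rvert\le 2n-1-\lvert D\rvert$, and from the argument of Theorem~\ref{ThmCoxx1} together with the antisymmetry of $\delta=\upCirc^n\downCirc^n$ (so that $\alpha(\equiv_\delta)=\equiv_\delta$ and $\alpha(w)=w$) you get $D\subseteq E\cup(2n-E)$, hence $\lvert D\rvert\le 2\lvert E\rvert$. These two facts alone give only $\lvert D\rvert\le\frac{2(2n-1)}{3}$; to reach $\lvert D\rvert\le n-1$ you would essentially need each centrally symmetric pair $\{i,2n-i\}\subseteq D$ to contribute \emph{two} elements to $E$, i.e.\ every descent $i$ of $\stack_B(w)$ to admit both a ``low'' witness ($v(r)<i$) and a ``high'' witness ($v(r)>i+1$) between positions $v^{-1}(i)$ and $v^{-1}(i+1)$. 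This is false. Take $n=3$ and $w=513462\in B_3$: one computes $v=\pi_\downarrow^{\equiv_\delta}(w)=213465$ and $\stack_B(w)=153426$, so $D=\{2,4\}$, but $\mathcal E_{\equiv_\delta}(w)=\{2\}$ only (the descent $i=4$ has only the high witness $v(5)=6$). So the symmetric pair $\{2,4\}$ contributes a single element to $E$, and no refinement of the form you describe is available; some genuinely new input is required. Your lower-bound candidate also fails: for $n=4$ the sawtooth $w=21436587$ is already the minimal element of its congruence class (its height labeling coincides with the postorder labeling of $\mathcal I_\delta^{-1}(w)$), so $\stack_B(w)=e$ has $0$ right descents rather than $2$. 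The paper instead uses $v=1324\cdots(2n-3)(2n)$ hmm, more precisely $v=13254\cdots(2n-1)(2n-2)(2n)$ with preimage $w=vs_1s_3\cdots s_{2n-1}$.

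For comparison, the paper's proof of the upper bound does not use Lemma~\ref{LemCoxx1} or the fence machinery at all. It works directly in the permutree model: writing $T=\skel(\mathcal I_\delta^{-1}(w))$, each right edge of $T$ produces an ascent of $v=\stack_B(w)$ in the postorder reading, so $\lvert\mathcal R(T)\rvert\le 2n-1-\lvert\Des^A(v)\rvert$; conversely, each descent of $v$ comes from an incomparable pair separated by a red wall, and one builds two injections from the descents into $\mathcal R(T)$ (one via the leftmost upward wall, one via the rightmost downward wall) whose images are disjoint precisely because under $\upCirc^n\downCirc^n$ an upward-wall vertex and everything to its left is $\upCirc$ and hence cannot emit a downward wall. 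This yields $\lvert\Des^A(v)\rvert\le\lvert\mathcal R(T)\rvert$, whence $2\lvert\Des^A(v)\rvert\le 2n-1$ and the bound follows. If you want to salvage your approach, you would need an analogue of this disjointness phenomenon at the level of the sets $E$ and $2n-E$, which is not supplied by Lemma~\ref{LemCoxx1} as stated.
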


\begin{proof}
Let $\delta=\upCirc^n\downCirc^n$. Fix $w\in B_n$, and let $v=\stack_B(w)$. Our first goal is to prove that $\lvert D_R(v)\rvert \leq\left\lfloor\frac{n}{2}\right\rfloor$. Let $T$ be the skeleton of $\mathcal I_\delta^{-1}(w)$. This means that there is an associated linear ordering $p:V(T)\to[n]$ and a labeling $\sigma:V(T)\to[n]$ such that $w=\sigma\circ p^{-1}$. Let $\sigma_{\post}:V(T)\to[n]$ be the postorder of $V(T)$ so that, by \eqref{EqCoxx2}, we have $v=\mathcal P(\mathcal I_\delta^{-1}(w))=\sigma\circ\sigma_{\post}^{-1}$. We denote each edge in $T$ by $(x,y)$, where $x$ is the lower endpoint of the edge and $y$ is the upper endpoint. Observe that if $(x,y)$ is a right edge in $T$, then $\sigma_{\post}(y)=\sigma_{\post}(x)+1$. Since $\sigma$ is a linear extension, we have $v(\sigma_{\post}(x))=\sigma(x)<\sigma(y)=v(\sigma_{\post}(x)+1)$, so $\sigma_{\post}(x)\in[2n-1]\setminus\Des^A(v)$. Therefore, $\lvert \mathcal R(T)\rvert \leq 2n-1-\lvert \Des^A(v)\rvert $, where $\mathcal R(T)$ is the set of right edges in $T$. We are going to show that 
\begin{equation}\label{EqCoxx4}
\lvert \Des^A(v)\rvert \leq\lvert \mathcal R(T)\rvert .
\end{equation} 
Once we do this, it will follow that $\lvert \Des^A(v)\rvert \leq 2n-1-\lvert \Des^A(v)\rvert $, so $2\lvert \Des^A(v)\rvert \leq 2n-1$. This will then imply that $\lvert \Des^A(v)\rvert \leq\left\lfloor\frac{2n-1}{2}\right\rfloor=n-1$, so $\lvert D_R(v)\rvert =\left\lfloor\frac{\lvert \Des^A(v)\rvert +1}{2}\right\rfloor\leq\left\lfloor\frac{n}{2}\right\rfloor$, as desired.  

For each $i\in\Des^A(v)$, let $x_i$ and $y_i$ be the vertices of $T$ such that $\sigma(x_i)=v(i)$ and $\sigma(y_i)=v(i+1)$. The pair $(v(i+1),v(i))$ is a left inversion of $v$, so it must also be a left inversion of $w$ (since $v\leq_R w$). This implies that $p(x_i)<p(y_i)$ and $\sigma(x_i)>\sigma(y_i)$. We have \[\sigma_{\post}(x_i)=v^{-1}(\sigma(x_i))=i<i+1=v^{-1}(\sigma(y_i))=\sigma_{\post}(y_i).\] Because $\sigma$ and $\sigma_{\post}$ are both linear extensions of $V(T)$, this implies that $x_i$ and $y_i$ are incomparable in the poset $(V(T),\preceq)$. Hence, for every $i\in\Des^A(v)$, the vertices $x_i$ and $y_i$ are separated by a red wall. Let $G$ be the set of indices $i\in\Des^A(v)$ such that $x_i$ and $y_i$ are separated by an upward red wall. 

We are going to define an injection $\omega:G\to\mathcal R(T)$. Suppose $i\in G$, and let $z_i$ be the leftmost vertex (i.e., the vertex that minimizes $p(z_i)$) that emits an upward red wall that separates $x_i$ from $y_i$. It follows from the definition of a permutree that there is a right edge $(z_i,q_i)\in\mathcal R(T)$ for some vertex $q_i$. Let $\omega(i)=(z_i,q_i)$. To see that $\omega:G\to\mathcal R(T)$ is injective, suppose there is some $j\in G$ with $\omega(j)=\omega(i)$. Then $z_i=z_j$. The vertices $x_j$ and $y_i$ are separated by the upward red wall emitting by $z_i$, so we must have $\sigma_{\post}(x_j)<\sigma_{\post}(y_i)$. However, $\sigma_{\post}(x_j)=v^{-1}(\sigma(x_j))=j$ and $\sigma_{\post}(y_i)=v^{-1}(\sigma(y_i))=i+1$, so $j<i+1$. Thus, $j\leq i$. Reversing the roles of $i$ and $j$ shows that $i\leq j$, so $i=j$.  

We now want to construct an injection $\omega':\Des^A(v)\setminus G\to \mathcal R(T)$. Suppose $i\in \Des^A(v)\setminus G$. Then $x_i$ and $y_i$ are separated by a downward red wall. Let $q_i'$ be the rightmost vertex (i.e., the vertex that maximizes $p(q_i')$) that emits a downward red wall that separates $x_i$ from $y_i$. It follows from the definition of a permutree that there is a right edge $(z_i',q_i')\in\mathcal R(T)$ for some vertex $z_i'$. Let $\omega'(i)=(z_i',q_i')$. The proof that $\omega'$ is injective is virtually identical to the proof of the injectivity of $\omega$ that we gave in the previous paragraph. 

To prove \eqref{EqCoxx4}, it suffices to show that the image of $\omega$ is disjoint from the image of $\omega'$. This is where we use the fact that $T$ is a $\delta$-permutree, where $\delta=\upCirc^n\downCirc^n$. Suppose $(z_i,q_i)$ is in the image of $\omega$. Then $z_i$ emits an upward red wall, so it must be represented by the symbol $\upCirc$. Since $p(q_i)<p(z_i)$, the vertex $q_i$ is also represented by $\upCirc$. However, this implies that $q_i$ cannot emit a downward red wall, so $(z_i,q_i)$ is not in the image of $\omega'$. 

We have proven that \[\max_{w\in B_n}\left\lvert D_R\left(\stack_B(w)\right)\right\rvert \leq\left\lfloor\frac{n}{2}\right\rfloor.\] To see that this bound is tight, consider the permutation $v=13254\cdots(2n-1)(2n-2)(2n)\in B_n$. Let $w=vs_1s_3s_5\cdots s_{2n-1}=3152\cdots (2n-1)(2n-4)(2n)(2n-2)$. For example, if $n=5$, then $v=1\,3\,2\,5\,4\,7\,6\,9\,8\,10$ and $w=3\,1\,5\,2\,7\,4\,9\,6\,10\,8$. Using \eqref{EqCoxx2}, one can readily check that $\stack_B(w)=v$ (see Figure~\ref{FigCoxx4}). Furthermore, $v$ has $\left\lfloor\frac{n}{2}\right\rfloor$ right descents in $B_n$.   
\end{proof}

\begin{figure}[ht]
  \begin{center}{\includegraphics[height=4.226cm]{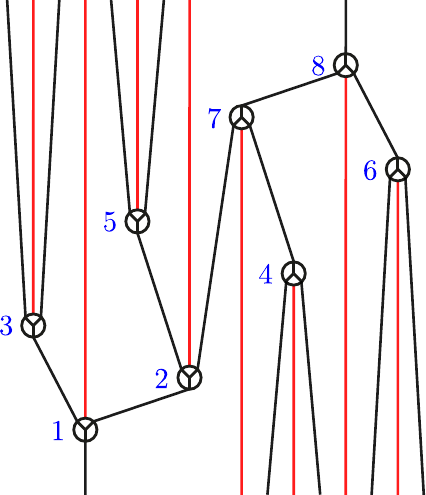}}
  \end{center}
  \caption{The decreasing permutree $\mathcal I_\delta^{-1}(w)$, where $w=31527486\in B_4$ and $\delta=\upCirc^4\downCirc^4$. The postorder reading of this tree is $v=13254768$. Note that $v$ has $2$ right descents in $B_4$; namely, $s_2^B$ and $s_4^B$.}\label{FigCoxx4}
\end{figure}

\section{Coxeter Stack-Sorting in Type $\widetilde A$}\label{SecAffine}

An \dfn{affine permutation} of size $n$ is a bijection $w:\mathbb Z\to\mathbb Z$ such that 
\[w(i+n)=w(i)+n\quad\text{for all }i\in\mathbb Z\] and 
\[\sum_{i=1}^nw(i)=\binom{n+1}{2}.\]
The set $\widetilde S_n$ of affine permutations of size $n$ forms a group under composition called the $n^\text{th}$ \dfn{affine symmetric group}; it is a Coxeter group of type~$\widetilde A_{n-1}$. The simple generators are $\widetilde s_1,\ldots,\widetilde s_n$, where $\widetilde s_i$ is the affine permutation that swaps $i+mn$ and $i+mn+1$ for all $m\in\mathbb Z$ and fixes all other integers. The simple generator $\widetilde s_i$ is a right descent of an affine permutation $w$ if and only if $w(i)>w(i+1)$. The \dfn{plot} of $w$ is the collection of points $(i,w(i))\in\mathbb R^2$ for all $i\in\mathbb Z$. We associate $w$ with its one-line notation, which is the infinite word $\cdots w(0)w(1)w(2)\cdots$. The \dfn{window notation} of $w$, which we write in brackets, is $[w(1),\ldots,w(n)]$. Note that $w$ is determined by its window notation. 

Define an \dfn{$n$-periodic decoration} to be an $n$-periodic bi-infinite word $\delta$ over $\{\noneCirc,\upCirc,\downCirc,\upDownCirc\}$. We think of the letters in this word as being indexed by the integers, so it is perhaps more helpful to think of $\delta$ as a function $\mathbb Z\to\{\noneCirc,\upCirc,\downCirc,\upDownCirc\}$, denoted by $i\mapsto\delta_i$, such that $\delta_i=\delta_{n+i}$ for all integers $i$. We define \dfn{affine permutrees} in exactly the same way that we defined  permutrees in Definition~\ref{DefPermutrees}; the only difference is that the set of vertices $V(T)$ is now countably infinite and the decoration $\delta$ is $n$-periodic instead of finite (when we use the word \emph{affine}, we tacitly assume there is a specific integer $n$ in the background). The linear ordering of the vertices of $T$ is now a bijection $p:V(T)\to\mathbb Z$. Associated to an affine permutree $T$ is a poset $(V(T),\preceq)$ obtained by saying $u\preceq v$ whenever there is a directed path (possibly with no edges) from $u$ to $v$. As in the symmetric group setting, two vertices $x$ and $y$ are incomparable in $(V(T),\preceq)$ if and only if they are separated by a red wall. We say a map $\sigma:V(T)\to\mathbb Z$ is an \dfn{affine linear extension} if $\sigma\circ p^{-1}$ is an affine permutation in $\widetilde S_n$ and $\sigma(u)\leq \sigma(v)$ whenever $u\preceq v$. A \dfn{decreasing affine permutree} is a pair $\mathcal T=(T,\sigma)$ such that $T$ is an affine permutree and $\sigma:V(T)\to[n]$ is an affine linear extension. We view $\sigma$ as a labeling of the vertices of $T$. When we draw a decreasing affine permutree, we will place the element with label $j$ at height $j$. We will also omit the infinite strands in our drawings to avoid clutter. See the left side of Figure~\ref{FigCoxx5}.

Define the \dfn{in-order reading} of the decreasing affine $\delta$-permutree $\mathcal T=(T,\sigma)$, denoted $\mathcal I_\delta(\mathcal T)$, to be the affine permutation $\sigma\circ p^{-1}$. In Section~\ref{SecPermutrees}, we described an insertion algorithm that creates a decreasing $\delta$-permtree from a permutation, thereby constructing the inverse of the in-order reading bijection. The same exact insertion algorithm works \emph{mutatis mutandis} in the affine setting. The main difference occurs when all of the symbols in the decoration are $\noneCirc$ or $\upCirc$. In this case, the algorithm begins at an arbitrary point $(i,w(i))$ in the plot of the affine permutation, which expels $1$ (if it is represented by $\noneCirc$) or $2$ (if it is represented by $\upCirc$) upward stands. Then the algorithm proceeds as before, and we can construct the part of the decreasing affine permutree below the point $(i,w(i))$ by $n$-periodically extending the part above $(i,w(i))$. Hence, the in-order reading defines a bijection from the set of decreasing affine permutrees with $n$-periodic decorations to $\widetilde S_n$ (the proof of this fact is identical to the proof Pilaud and Pons gave in the symmetric group setting \cite{Pilaud}, so we omit it). If $\mathcal I_\delta(\mathcal T)=w$, then the affine permutree is called the \dfn{$\delta$-skeleton} of $w$ and is denoted by $\skel_\delta(w)$. Define the \dfn{affine $\delta$-permutree congruence} $\equiv_\delta$ on $\widetilde S_n$ by saying $v\equiv_\delta w$ if and only if $\skel_\delta(v)=\skel_\delta(w)$. 

The affine $\delta$-permutree congruence is a semilattice congruence on the left weak order of $\widetilde S_n$. One can prove this in a manner virtually identical to how Pilaud and Pons proved that permutree congruences are lattice congruences on symmetric groups (they invoked results of Reading, but those results can also be adapted immediately to the affine setting). 

Suppose $T$ is an affine permutree with an $n$-periodic decoration $\delta$. The \dfn{postorder} of $V(T)$ is the unique affine linear extension $\sigma_{\post}:V(T)\to\mathbb Z$ such that $\sigma_{\post}(x)<\sigma_{\post}(y)$ whenever $x,y\in V(T)$ are incomparable and satisfy $p(x)<p(y)$. The \dfn{postorder reading} of an affine linear extension $\sigma:V(T)\to\mathbb Z$ is $\mathcal P(\sigma)=\sigma\circ\sigma_{\post}^{-1}$, which is an element of $\widetilde S_n$. Just as in the symmetric group setting, we define the postorder reading $\mathcal P(\mathcal T)$ of the decreasing affine permutree $\mathcal T=(T,\sigma)$ to be $\mathcal P(\sigma)$. The affine permutree congruence $\equiv_\delta$ gives rise to the \dfn{affine permutree stack-sorting operator} ${\bf S}_{\equiv_\delta}:\widetilde S_n\to\widetilde S_n$. The same argument used to prove Proposition~\ref{PropCoxx1} yields the fact that 
\begin{equation}\label{EqAffinePostorder}
{\bf S}_{\equiv_\delta}=\mathcal P\circ\mathcal I_\delta^{-1}.
\end{equation}

For the rest of this section, we will focus our attention on one specific affine permutree stack-sorting operator that is very similar to West's stack-sorting map. Let $\downCirc^{\mathbb Z}$ denote the $n$-periodic decoration whose symbols are all $\downCirc$. We call affine permutrees with decoration $\downCirc^{\mathbb Z}$ \dfn{affine binary plane trees}. We call $\equiv_{\downCirc^{\mathbb Z}}$ the \dfn{affine sylvester congruence} and denote it by $\equiv_{\widetilde{\syl}}$. The \dfn{affine stack-sorting map}, denoted $\widetilde\stack$, is the Coxeter stack-sorting operator ${\bf S}_{\equiv_{\widetilde{\syl}}}$. We say an affine permutation $w\in\widetilde S_n$ is \dfn{$t$-stack-sortable} if $\widetilde\stack^t(w)=e$. The only decoration symbol that will appear throughout the rest of this section is $\downCirc$. Therefore, we will write $\mathcal I(\mathcal T)$ with the understanding that this is $\mathcal I_{\downCirc^{\mathbb Z}}(\mathcal T)$ when $\mathcal T$ has infinitely many vertices and $\mathcal I_{\downCirc^k}(\mathcal T)$ when $\mathcal T$ has $k$ vertices. 

\begin{remark}
The affine sylvester congruence is not new, though to the best of our knowledge, it has not been given this name before. Motivated by an attempt to model certain geometric and combinatorial objects arising from cluster algebras, Reading and Speyer \cite{ReadingSpeyerCambrian2} considered the orientation $-\Omega$ of the type-$\widetilde A_{n-1}$ Coxeter diagram given by $\widetilde s_1\to\widetilde s_2\to\cdots\to\widetilde s_n\to\widetilde s_1$ (they call the reverse orientation $\Omega$). From this orientation, they constructed a map $\pi_\downarrow^{-\Omega}:\widetilde S_n\to\widetilde S_n$. Using the results in their paper, one can show that $\pi_\downarrow^{-\Omega}$ is the same as our downward projection map $\pi_\downarrow^{\equiv_{\widetilde{\syl}}}$. It follows that the fibers of the map $\pi_\downarrow^{-\Omega}$ are precisely the affine sylvester congruence classes. 
\end{remark}

Let $w$ be a permutation or an affine permutation. We say entries $b,c,a$ in $w$ form a \dfn{231-pattern} if they appear in the order $b,c,a$ in $w$ and satisfy $a<b<c$. We say $w$ is \dfn{231-avoiding} if no three entries form a $231$ pattern in $w$. Knuth \cite{Knuth} showed that a permutation is $1$-stack-sortable if and only if it is $231$-avoiding, and he proved that the number of $231$-avoiding permutations in $S_n$ is the Catalan number $\frac{1}{n+1}\binom{2n}{n}$. Crites investigated $231$-avoiding affine permutations (as well as other pattern-avoiding affine permutations) in \cite{Crites}, showing, in particular, that the number of $231$-avoiding elements of $\widetilde S_n$ is $\binom{2n-1}{n}$. 

Suppose $w\in\widetilde S_n$, and let $\mathcal I^{-1}(w)=\mathcal T=(T,\sigma)$. Let $p:V(T)\to\mathbb Z$ be the associated linear ordering of the vertices of the affine binary plane tree $T$. Fix integers $a<b$. Saying an integer $c$ is such that $b,c,a$ form a $231$-pattern in $w$ is equivalent to saying that $p(\sigma^{-1}(b))<p(\sigma^{-1}(a))$ and that $\sigma^{-1}(c)$ emits a downward red wall that separates $\sigma^{-1}(b)$ and $\sigma^{-1}(a)$. Hence, such an entry $c$ exists if and only if $\sigma_{\post}(\sigma^{-1}(b))<\sigma_{\post}(\sigma^{-1}(a))$. In other words, such an entry $c$ exists if and only if $b$ appears before $a$ in $\mathcal P(\mathcal T)$. Using \eqref{EqAffinePostorder}, we deduce the following simple proposition. This is an affine analogue of Knuth's characterization of $1$-stack-sortable permutations. We stress that the enumeration of $231$-avoiding affine permutations in the next proposition is due to Crites \cite{Crites}.

\begin{proposition}\label{PropCoxx3}
An affine permutation $w\in\widetilde S_n$ is $1$-stack-sortable if and only if it is $231$-avoiding. The number of $1$-stack-sortable elements of $\widetilde S_n$ is $\binom{2n-1}{n}$. 
\end{proposition}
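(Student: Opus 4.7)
The plan is to observe that the characterization promised in the proposition is essentially already carried out in the paragraph immediately preceding its statement, so the proof will amount to assembling that observation with the definition of $1$-stack-sortability and then citing Crites for the enumeration.

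First, I would unwind what being $1$-stack-sortable means. By definition, $w$ is $1$-stack-sortable if and only if $\widetilde\stack(w)=e$. By the identity \eqref{EqAffinePostorder} specialized to the decoration $\downCirc^{\mathbb Z}$, this is the same as $\mathcal P(\mathcal I^{-1}(w))=e$. Writing $\mathcal T = (T,\sigma) = \mathcal I^{-1}(w)$, and noting that $e\in\widetilde S_n$ is characterized by having no inversions, the condition $\mathcal P(\mathcal T)=e$ is equivalent to: for every pair of integers $a<b$, the entry $a$ appears before the entry $b$ in the postorder reading, i.e., $\sigma_{\post}(\sigma^{-1}(a))<\sigma_{\post}(\sigma^{-1}(b))$ for all $a<b$.

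Second, I would invoke the preceding paragraph, which shows that for any fixed $a<b$, some integer $c$ makes $b,c,a$ a $231$-pattern in $w$ if and only if $\sigma_{\post}(\sigma^{-1}(b))<\sigma_{\post}(\sigma^{-1}(a))$. Combining this with the previous step, $w$ fails to be $1$-stack-sortable precisely when there is a pair $a<b$ for which $b$ precedes $a$ in $\mathcal P(\mathcal T)$, which happens precisely when $w$ contains a $231$-pattern. This proves the first assertion.

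Finally, the enumeration $\binom{2n-1}{n}$ of $231$-avoiding affine permutations in $\widetilde S_n$ is exactly the content of Crites's theorem \cite{Crites}, so I would simply cite it to finish. There is no real obstacle here: all of the nontrivial input (the postorder description of $\widetilde\stack$ given by \eqref{EqAffinePostorder}, the translation between $231$-patterns and postorder inversions, and Crites's count) has already been established, so the proof is just a short bookkeeping argument.
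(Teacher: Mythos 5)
Your proposal is correct and follows essentially the same route as the paper: the paper proves the characterization in the paragraph immediately preceding the proposition (translating the existence of a $231$-pattern $b,c,a$ into the postorder inversion $\sigma_{\post}(\sigma^{-1}(b))<\sigma_{\post}(\sigma^{-1}(a))$, then applying \eqref{EqAffinePostorder}), and attributes the count $\binom{2n-1}{n}$ to Crites. Your bookkeeping — including the observation that $\mathcal P(\mathcal T)=e$ exactly when no larger entry precedes a smaller one — fills in the same steps the paper leaves implicit.
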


We are going to prove an affine analogue of the fact that permutations of size $n$ in the image of the stack-sorting map can have at most $\left\lfloor\frac{n-1}{2}\right\rfloor$ right descents. The proof of this upper bound is very similar to the proof of the analogous result for symmetric groups, so we merely sketch it. 

\begin{theorem}\label{PropCoxx4}
For each $n\geq 1$, we have \[\max_{w\in\widetilde S_n}\lvert D_R(\widetilde\stack(w))\rvert =\left\lfloor\frac{n}{2}\right\rfloor.\]
\end{theorem}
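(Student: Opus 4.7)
The plan is to prove the stated equality by first establishing the upper bound $|D_R(\widetilde\stack(w))|\leq\lfloor n/2\rfloor$ via an adaptation of the two-injection argument from Theorem~\ref{ThmCoxx2} to the affine, periodic setting, and then exhibiting an explicit family of affine permutations attaining the bound.

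Fix $w\in\widetilde S_n$, set $v=\widetilde\stack(w)=\mathcal P(\mathcal I^{-1}(w))$, and let $T=\mathcal I^{-1}(w)$ be the decreasing affine binary plane tree, with positional bijection $p\colon V(T)\to\mathbb Z$ and labeling $\sigma$, so $w=\sigma\circ p^{-1}$ and $v=\sigma\circ\sigma_{\post}^{-1}$. Let $\tau\colon V(T)\to V(T)$ be the translation automorphism sending $p^{-1}(j)$ to $p^{-1}(j+n)$; this shifts $p$, $\sigma$, and $\sigma_{\post}$ each by $n$, and the decoration $\downCirc^{\mathbb Z}$ forces every red wall in $T$ to be downward. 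Write $\Des^A(v)=\{i\in\mathbb Z:v(i)>v(i+1)\}$ for the $n$-periodic descent set, so that $|D_R(v)|=|\Des^A(v)\cap\{1,\ldots,n\}|$. Following Theorem~\ref{ThmCoxx2}, for each $i\in\Des^A(v)$ the vertices $x_i=\sigma_{\post}^{-1}(i)$ and $y_i=\sigma_{\post}^{-1}(i+1)$ are incomparable and separated by a downward red wall; letting $q_i'$ be the rightmost wall-emitter (well-defined since only finitely many positions lie between $p(x_i)$ and $p(y_i)$) and $z_i'$ its right child, set $\omega'(i)=(z_i',q_i')\in\mathcal R(T)$. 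The proof of injectivity is verbatim the one given for Theorem~\ref{ThmCoxx2}. Likewise, for each right edge $(x,y)\in\mathcal R(T)$ one has $\sigma(x)<\sigma(y)$ and $\sigma_{\post}(y)=\sigma_{\post}(x)+1$, so $\sigma_{\post}(x)\notin\Des^A(v)$, and the map $\phi(x,y)=\sigma_{\post}(x)$ gives an injection $\phi\colon\mathcal R(T)\to\mathbb Z\setminus\Des^A(v)$.

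The new ingredient in the affine setting is $\tau$-equivariance: I would verify directly that $\omega'(i+n)=\tau(\omega'(i))$ and $\phi(\tau(x,y))=\phi(x,y)+n$, both of which follow from the fact that $\tau$ shifts $p$, $\sigma$, and $\sigma_{\post}$ uniformly by $n$. Passing to the $\tau$-quotients, $\omega'$ and $\phi$ descend to injections whose domains and codomains have cardinalities $|D_R(v)|$, $|\mathcal R(T)/\tau|$, and $n-|D_R(v)|$, respectively. Chaining them yields $|D_R(v)|\leq|\mathcal R(T)/\tau|\leq n-|D_R(v)|$, hence $|D_R(v)|\leq\lfloor n/2\rfloor$.

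For the matching lower bound, I would exhibit an explicit preimage. When $n=2k$, take $w_n\in\widetilde S_n$ with window $[-1,4,1,6,3,8,\ldots,2k-3,2k+2]$; a direct check shows that the affine binary plane tree in which $p^{-1}(2i+1)$ is the right child of $p^{-1}(2i)$ for every $i\in\mathbb Z$ is $\mathcal I^{-1}(w_n)$, and its postorder reading is the zigzag $[2,1,4,3,\ldots,2k,2k-1]$, which has $k=n/2$ right descents. When $n=2k+1$, the analogous tree (with $k$ right-child pairs and one isolated vertex per $\tau$-orbit, as illustrated by $w_3=[0,1,5]$ giving $\widetilde\stack(w_3)=[2,1,3]$) produces an affine permutation whose image under $\widetilde\stack$ has window $[2,1,4,3,\ldots,2k,2k-1,2k+1]$ with $k=\lfloor n/2\rfloor$ right descents. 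The main subtlety will be confirming that the insertion algorithm applied to $w_n$ really produces the proposed tree; once this local check is in hand, the explicit descent counts complete the proof.
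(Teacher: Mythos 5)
Your upper bound is correct and is essentially the argument the paper gives: the paper also sends each descent $i$ of $v=\widetilde\stack(w)$ to the postorder index of the right child of the branching vertex whose left subtree ends at $\sigma_{\post}^{-1}(i)$, checks that this index is an ascent, and reduces modulo $n$; your version merely factors the same composite through the set of right edges and packages the periodicity as $\tau$-equivariance, which is fine. Your even-$n$ witness also checks out: the window $[-1,4,1,6,\ldots,2k-3,2k+2]$ defines the \emph{same} function $\mathbb Z\to\mathbb Z$ for every even $n$, its insertion tree is an infinite left branch on the even positions with each odd position attached as a right-child leaf, and the postorder reading is $[2,1,4,3,\ldots,2k,2k-1]$ with $k$ descents. (The paper reaches the same conclusion more cheaply by lifting a single non-sorted element of $\widetilde S_2$ to $\widetilde S_{2k}$; for odd $n$ it instead uses the embedding $\iota:S_n\to\widetilde S_n$ given by the window $[u(1),\ldots,u(n)]$, which satisfies $\widetilde\stack\circ\iota=\iota\circ\stack$.)

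The odd case of your lower bound is wrong as stated. The affine permutation $w_3=[0,1,5]$ has one-line notation $\cdots 2\,0\,1\,5\,3\,4\cdots$; its only inversions are the pairs of positions $(3m,3m+1)$ and $(3m,3m+2)$, and the sole position strictly between $3m$ and $3m+2$ carries the value $3m<3m+2=w_3(3m)$, so $w_3$ is $231$-avoiding. By Proposition~\ref{PropCoxx3} it is $1$-stack-sortable, so $\widetilde\stack([0,1,5])=e$, not $[2,1,3]$. The structural reason your proposed tree fails is that the extra vertex of each $\tau$-orbit sits as a \emph{left} child inside the right subtree, so no two consecutive vertices in postorder are incomparable and the postorder reading has no descents at all: a descent at $i$ requires $\sigma_{\post}^{-1}(i)$ and $\sigma_{\post}^{-1}(i+1)$ to lie in the left and right subtrees of a common branching vertex with the larger label on the left. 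The gap is easy to repair: for $n=3$ take $w_3=[2,3,1]=\iota(231)$, which gives $\widetilde\stack(w_3)=\iota(\stack(231))=[2,1,3]$, and in general for odd $n$ apply $\iota$ to any preimage under $\stack$ of a permutation in the image of $\stack$ with $\frac{n-1}{2}$ descents, exactly as the paper does.
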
 

\begin{proof}
Choose $w\in\widetilde S_n$, and let $v=\widetilde\stack(w)$. Let $\Des(v)$ be the set of indices $i\in[n]$ such that $\widetilde s_i\in D_R(v)$. Let $\mathcal I^{-1}(w)=\mathcal T=(T,\sigma)$. Let $\sigma_{\post}:V(T)\to\mathbb Z$ be the postorder of $V(T)$. Suppose $\widetilde s_i$ is a right descent of $v$. Because $v=\mathcal P(\mathcal T)=\sigma\circ\sigma_{\post}^{-1}$, this means that $\sigma(\sigma_{\post}^{-1}(i))>\sigma(\sigma_{\post}^{-1}(i+1))$. Since $\sigma$ and $\sigma_{\post}$ are both affine linear extensions of $(V(T),\preceq)$, this forces $\sigma_{\post}^{-1}(i)$ and $\sigma_{\post}^{-1}(i+1)$ to be incomparable in $(V(T),\preceq)$. It follows that there is a vertex $x_i$ with two children in $T$ such that $\sigma_{\post}^{-1}(i)$ is the vertex in the left subtree of $x_i$ appearing last in the postorder. In other words, the maximum of $\sigma_{\post}(y)$ as $y$ ranges over the vertices in the left subtree of $x_i$ is $i$. Let $j_i=\sigma_{\post}(x_i')$, where $x_i'$ is the right child of $x_i$. Then $v(j_i)=\sigma(x_i')<\sigma(x_i)=v(j_i+1)$. Let $\overline j_i$ be the unique element of $[n]$ that is congruent to $j_i$ modulo $n$. The map $i\mapsto \overline j_i$ is an injection from $\Des(v)$ into $[n]\setminus\Des(v)$, so $\lvert \Des(v)\rvert \leq\left\lfloor\frac{n}{2}\right\rfloor$. This proves that $\max\limits_{w\in\widetilde S_n}\lvert D_R(\widetilde\stack(w))\rvert \leq\left\lfloor\frac{n}{2}\right\rfloor$. 

One could prove that this upper bound is tight by writing down explicit affine permutations, but we prefer a more theoretical approach. First, suppose $n$ is odd. Observe that there is a natural injection $\iota:S_n\to\widetilde S_n$ sending a permutation $u$ to the unique affine permutation with window notation $[u(1),\ldots,u(n)]$. The number of right descents of $u$ in $S_n$ is the same as the number of right descents of $\iota(u)$ in $\widetilde S_n$. For $u,u'\in S_n$, we have $u\equiv_{\syl}u'$ if and only if $\iota(u)\equiv_{\widetilde{\syl}}\iota(u')$. It follows that $\widetilde\stack\circ\iota=\iota\circ\stack$. We know already from \cite{DefantEngenMiller} that there exists a permutation $u\in S_n$ that lies in the image of $\stack$ and that has $\frac{n-1}{2}$ right descents (e.g., the permutation $214365\cdots(n-1)(n-2)n$). Then $\iota(u)$ is in the image of $\widetilde s$ and has $\frac{n-1}{2}$ right descents. 

Now suppose $n$ is even. Because $\widetilde S_2$ is infinite, it follows from Proposition~\ref{PropCoxx3} that there exists $w\in\widetilde S_2$ such that $\widetilde\stack(w)$ has $1$ right descent. There is an affine permutation $\widehat w\in\widetilde S_n$ that is equal to $w$ as a function from $\mathbb Z$ to $\mathbb Z$. The decreasing affine binary plane trees $\mathcal I^{-1}(w)$ and $\mathcal I^{-1}(\widehat w)$ are identical, so their postorder readings are identical. Thus, $\widetilde\stack(\widehat w)$ is equal to $\widetilde\stack(w)$ as a function. Since $\widetilde\stack(w)$ has $1$ right descent in $\widetilde S_2$, $\widetilde\stack(\widehat w)$ must have $\frac{n}{2}$ right descents as an element of $\widetilde S_n$.  
\end{proof}

Let $K_n$ be the maximum length of a $1$-stack-sortable affine permutation in $\widetilde S_n$; this is finite by Proposition~\ref{PropCoxx3}. Notice that $w\in\widetilde S_n$ is $1$-stack-sortable if and only if it is in the image of the downward projection map $\pi_\downarrow^{\equiv_{\widetilde{\syl}}}$ associated to the affine sylvester congruence. It follows that for every $v\in\widetilde S_n$, we have \[\ell({\bf S}_{\equiv_{\syl}}(v))=\ell\left(v\left(\pi_\downarrow^{\equiv_{\widetilde{\syl}}}(v)\right)^{-1}\right)=\ell(v)-\ell\left(\pi_\downarrow^{\equiv_{\widetilde{\syl}}}(v)\right)\geq\ell(v)-K_n.\] Because there are only finitely many elements of $\widetilde S_n$ of each length, this implies that each affine permutation has only finitely many preimages under $\widetilde\stack$. Hence, it makes sense to define the \dfn{fertility} of an affine permutation $w\in\widetilde S_n$ to be $\lvert \widetilde\stack^{-1}(w)\rvert $. 

The next theorem states that the fertility of an affine permutation only depends on its affine sylvester class. In fact, it says much more. Define the \dfn{affine stack-sorting tree} on $\widetilde S_n$, denoted $\widetilde {\mathscr S}_n$, to be the infinite rooted tree with vertex set $\widetilde S_n$ and with root vertex $e$ such that the parent of each non-root vertex $w$ is $\widetilde\stack(w)$. When we refer to the \emph{subtree of $\widetilde{\mathscr S}_n$ with root $x$}, we mean the rooted subtree of $\widetilde{\mathscr S}_n$ with root vertex $x$ and vertex set $\bigcup_{t\geq 0}\widetilde\stack^{-t}(x)$. 

\begin{theorem}\label{ThmCoxx3}
Suppose $v,w\in\widetilde S_n$ are such that $v\equiv_{\widetilde{\syl}} w$. Then the subtree of $\widetilde{\mathscr S}_n$ with root $v$ is isomorphic as a rooted tree to the subtree of $\widetilde{\mathscr S}_n$ with root $w$. In particular, $v$ and $w$ have the same fertility. 
\end{theorem}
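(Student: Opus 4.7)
The plan is to construct an isomorphism $\Phi$ from the subtree of $\widetilde{\mathscr S}_n$ rooted at $v$ to the subtree rooted at $w$ by recursion on the depth of the former. This depth is finite because essentialness of $\equiv_{\widetilde{\syl}}$ forces $\pi_\downarrow^{\equiv_{\widetilde{\syl}}}(u) \ne e$ for every $u \ne e$, so $\widetilde{\stack}$ strictly reduces length at each non-identity element. Since congruence classes of a semilattice congruence are convex, I may assume $v \lessdot_L w$, so that $w = \widetilde{s}_j v$ for some simple generator $\widetilde{s}_j$ and $v^{-1}(j) < v^{-1}(j+1)$. The hypothesis $v \equiv_{\widetilde{\syl}} \widetilde{s}_j v$ is equivalent to the statement that, in the common skeleton $T_v := \skel(\mathcal I^{-1}(v))$, the two vertices labeled $j$ and $j+1$ under $\sigma_v$ are incomparable in $(V(T_v), \preceq)$, for otherwise swapping these two labels would break the linear-extension condition.

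The crux is the following Key Lemma: for every $u \in \widetilde{\stack}^{-1}(v)$, the vertices of $T_u := \skel(\mathcal I^{-1}(u))$ labeled $j$ and $j+1$ under the labeling $\tau_u$ of $\mathcal I^{-1}(u) = (T_u, \tau_u)$ are also incomparable in $(V(T_u), \preceq)$. I would prove this by contradiction. Set $a := v^{-1}(j)$ and $b := v^{-1}(j+1)$, so that $a < b$. Since $v$ is the postorder reading of $(T_u, \tau_u)$, the vertex $x$ with $\tau_u(x) = j$ has postorder position $a$ in $T_u$, and the vertex $y$ with $\tau_u(y) = j+1$ has postorder position $b$. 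If $x$ were a descendant of $y$ in $T_u$, then the postorder range of $y$'s subtree would be a contiguous interval $[k_1, b]$ with $k_1 \le a$, and because $\tau_u$ is a linear extension, every label on this subtree would be at most $j+1$; translating via the postorder, $v(k) \le j+1$ for all $k \in [a, b]$. On the other hand, the incomparability of the labels $j$ and $j+1$ in $T_v$ implies, by inspection of the insertion algorithm for $T_v$, the existence of an in-order position $c \in (a, b)$ whose $\sigma_v$-label is at least $j+2$: otherwise the label-$(j{+}1)$ vertex would, upon insertion, attach to the upward strand emanating from the label-$j$ vertex, and the two labels would become comparable. Thus $v(c) \ge j+2$, contradicting $v(c) \le j+1$.

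Granted the Key Lemma, swapping the labels $j$ and $j+1$ (together with their $n$-periodic translates) in $\tau_u$ still yields a linear extension of $(V(T_u), \preceq)$, so $(T_u, \widetilde{s}_j \tau_u)$ is a decreasing affine permutree whose in-order reading is $\widetilde{s}_j u$; in particular $\widetilde{s}_j u \equiv_{\widetilde{\syl}} u$. Reading off postorder gives $\widetilde{\stack}(\widetilde{s}_j u) = \widetilde{s}_j v = w$. Hence $u \mapsto \widetilde{s}_j u$ is an involutive bijection $\widetilde{\stack}^{-1}(v) \to \widetilde{\stack}^{-1}(w)$ that sends each preimage to a sylvester-equivalent preimage of $w$. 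The inductive hypothesis applied to each pair $(u, \widetilde{s}_j u)$ furnishes an isomorphism of their subtrees; gluing these isomorphisms along $u \mapsto \widetilde{s}_j u$ at the top produces $\Phi$, and the ``in particular'' statement about fertilities is immediate.

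The main obstacle is the Key Lemma, whose proof rests on two special features of decreasing affine binary plane trees: the contiguity of postorder subtree ranges, and the characterization of incomparability via blocking downward red walls in the insertion algorithm. Both features transfer cleanly from the symmetric-group setting to the affine one.
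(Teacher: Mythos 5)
Your proposal follows the same overall strategy as the paper---reduce to a cover relation $w=\widetilde s_jv$ with $v\lessdot_L w$, show that $u\mapsto\widetilde s_ju$ is a fiber bijection $\widetilde\stack^{-1}(v)\to\widetilde\stack^{-1}(w)$ preserving the affine sylvester congruence, and iterate---but your proof of the Key Lemma is genuinely different from the paper's. The paper works inside the preimage tree: it lifts a left inversion of $v$ (formed by $j+1$ and the large separating entry) to $u$ via $u\geq_R v$, locates a second downward red wall in $\skel(\mathcal I^{-1}(u))$ separating that pair, and then checks that this second wall also separates the vertices labeled $j$ and $j+1$. You instead argue by contradiction using the contiguity of postorder subtree ranges: if the label-$j$ vertex were a descendant of the label-$(j+1)$ vertex in $T_u$, then every entry of $v$ in positions $[a,b]$ would be at most $j+1$, contradicting the entry $v(c)\geq j+2$ with $a<c<b$ forced by incomparability in $T_v$. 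Your route is shorter and avoids the second round of wall-chasing; the contiguity you invoke does hold for affine binary plane trees (the subtree of a vertex on the infinite left branch occupies a half-infinite interval of postorder positions, which is harmless here).

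Two repairs are needed. First, the claim that the subtree rooted at $v$ has finite depth is unjustified (and false in general: elements at level $t$ merely have length at least $\ell(v)+t$, and lengths in $\widetilde S_n$ are unbounded, so the length-decreasing property of $\widetilde\stack$ only controls forward orbits, not the depth of the backward tree). Hence ``recursion on the depth'' is not well-founded as stated. The fix is the paper's bookkeeping: the single map $x\mapsto\widetilde s_jx$ works at every level, since your Key Lemma shows each $u\in\widetilde\stack^{-1}(v)$ again satisfies $u\equiv_{\widetilde{\syl}}\widetilde s_ju$ with the same generator; one then inducts on the level $t$ to see that $x\mapsto\widetilde s_jx$ restricts to a bijection $\widetilde\stack^{-t}(v)\to\widetilde\stack^{-t}(w)$ commuting with $\widetilde\stack$. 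Second, calling $u\mapsto\widetilde s_ju$ an ``involutive bijection'' presupposes the reverse inclusion, that $\widetilde s_j$ carries $\widetilde\stack^{-1}(w)$ back into $\widetilde\stack^{-1}(v)$. This is easy---in $w$ the value $j+1$ precedes $j$, so in the skeleton of any preimage of $w$ the vertices labeled $j$ and $j+1$ are automatically incomparable because the labeling and the postorder are two linear extensions ordering them oppositely---but it must be said; the paper obtains this direction from Theorem~\ref{ThmWeakMono}.
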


\begin{proof}
Each affine sylvester class is convex (as defined in Section~\ref{SecDescents}) and contains a unique minimal element, so it suffices to prove the theorem when $\ell(w)=\ell(v)+1$. We are going to prove that if $z\in\widetilde S_n$ and $i\in[n]$ are such that $z\equiv_{\widetilde{\syl}}\widetilde s_iz$, then the map $u\mapsto \widetilde s_i u$ is a bijection from $\widetilde\stack^{-1}(z)$ to $\widetilde\stack^{-1}(\widetilde s_iz)$ such that $u\equiv_{\widetilde{\syl}}\widetilde s_iu$ for all $u\in\widetilde\stack^{-1}(z)$. It will then follow from repeated applications of this fact that the map $u\mapsto \widetilde s_iu$ is an isomorphism from the subtree of $\widetilde{\mathscr S}_n$ with root $z$ to the subtree of $\widetilde{\mathscr S}_n$ with root $\widetilde s_iz$. As this is true for all $i\in[n]$, this will prove the theorem in the case when $\ell(w)=\ell(v)+1$. 

Suppose $z\in\widetilde S_n$ and $i\in[n]$ are such that $z\equiv_{\widetilde{\syl}}\widetilde s_iz$. By switching the roles of $z$ and $\widetilde s_iz$ if necessary, we may assume $z\leq_L \widetilde s_iz$. By specializing the proof of Theorem~\ref{ThmWeakMono} to the setting of the affine stack-sorting map, we find that the map $u\mapsto \widetilde s_iu$ is an injection from $\widetilde\stack^{-1}(\widetilde s_iz)$ into $\widetilde\stack^{-1}(z)$. To prove that it is also surjective, choose $\widehat z\in\widetilde\stack^{-1}(z)$. Let $\mathcal I^{-1}(z)=\mathcal T=(T,\sigma)$. Let $\mathcal I^{-1}(\widehat z)=\widehat{\mathcal T}=(\widehat{T},\widehat\sigma)$. Then $z=\mathcal P(\widehat{\mathcal T})$ by \eqref{EqAffinePostorder}. Because $\ell(\widetilde s_iz)>\ell(z)$, the entry $i$ must appear before $i+1$ in $z$. Since $\widetilde s_iz\equiv_{\widetilde{\syl}}z$, the vertices $\sigma^{-1}(i)$ and $\sigma^{-1}(i+1)$ must be incomparable in the poset $(V(T),\preceq)$. This means that there is an integer $j>i$ such that $\sigma^{-1}(i)$ and $\sigma^{-1}(i+1)$ are separated by a downward red wall emitted by $\sigma^{-1}(j)$ (see Figure~\ref{FigCoxx10}). Now, $j$ appears between $i$ and $i+1$ in $z$, so $(i+1,j)$ is a left inversion of $z$. As $\widetilde\stack$ is a Coxeter stack-sorting operator, we have $\widehat z\geq_R\widetilde\stack(\widehat z)=z$. It follows that $(i+1,j)$ is a left inversion of $\widehat z$. Since $j$ appears before $i+1$ in $z$, the vertices $\widehat\sigma^{-1}(i+1)$ and $\widehat\sigma^{-1}(j)$ are incomparable in $(V(\widehat T),\preceq)$. Consequently, there is an integer $k>j$ such that $\widehat \sigma^{-1}(j)$ and $\widehat \sigma^{-1}(i+1)$ are separated by the downward red wall emitted by $\widehat \sigma^{-1}(k)$ (see Figure~\ref{FigCoxx10}). If $i$ appeared to the right of $k$ in $\widehat z$, then $\widehat \sigma^{-1}(j)$ and $\widehat \sigma^{-1}(i)$ would be separated by the downward red wall emitted by $\widehat \sigma^{-1}(k)$. However, this would force $j$ to appear before $i$ in $z$, which would be a contradiction. Hence, $i$ must appear to the left of $k$ in $\widehat z$. This means that $\widehat \sigma^{-1}(i)$ and $\widehat \sigma^{-1}(i+1)$ are separated by the downward red wall emitting by $\widehat \sigma^{-1}(k)$ in $\widehat T$. Since $\widehat{\mathcal T}$ is a decreasing affine binary plane tree, it follows that for every integer $m$, the vertices $\widehat \sigma^{-1}(i+mn)$ and $\widehat \sigma^{-1}(i+mn+1)$ are incomparable in $(V(\widehat T),\preceq)$. Hence, $\widetilde s_i \widehat z\equiv_{\widetilde{\syl}} \widehat z$. We deduce that $\pi_\downarrow^{\equiv_{\widetilde{\syl}}}(\widetilde s_i\widehat z)=\pi_\downarrow^{\equiv_{\widetilde{\syl}}}(\widehat z)$, so \[\widetilde\stack(\widetilde s_i\widehat z)=\widetilde s_i \widehat z\left(\pi_\downarrow^{\equiv_{\widetilde{\syl}}}(\widetilde s_i\widehat z)\right)^{-1}=\widetilde s_i \widehat z\left(\pi_\downarrow^{\equiv_{\widetilde{\syl}}}(\widehat z)\right)^{-1}=\widetilde s_i\widetilde\stack(\widehat z)=\widetilde s_i z.\] This proves that the map $u\mapsto \widetilde s_iu$ is a bijection from $\widetilde\stack^{-1}(\widetilde s_iz)$ to $\widetilde\stack^{-1}(z)$. Because $\widetilde s_i$ is an involution, the map $u\mapsto \widetilde s_iu$ is a bijection from $\widetilde\stack^{-1}(z)$ to $\widetilde\stack^{-1}(\widetilde s_iz)$.   
\end{proof}

\begin{figure}[ht]
\begin{center}
\includegraphics[height=4.274cm]{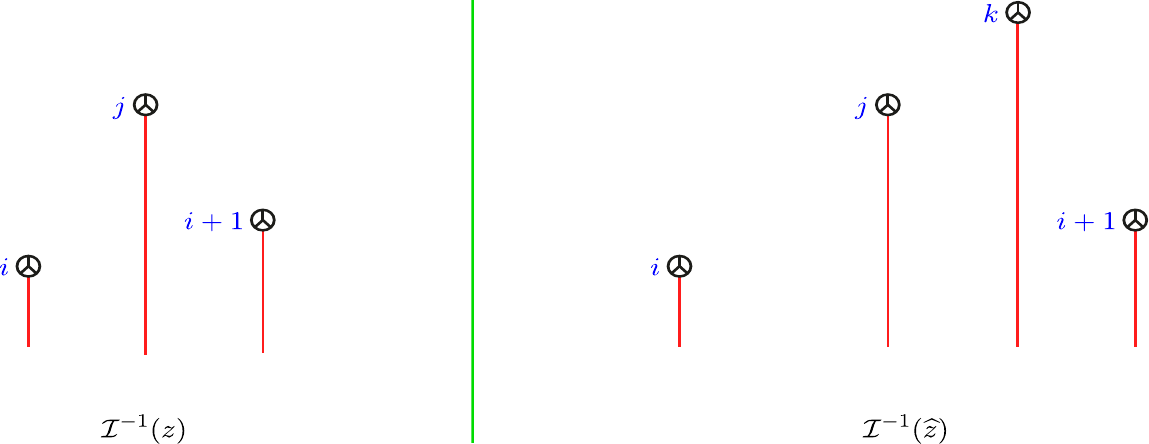}
\caption{A schematic illustration of the proof of Theorem~\ref{ThmCoxx3}. On the left are some of the vertices in $\mathcal I^{-1}(z)$. On the right are some of the vertices in $\mathcal I^{-1}(\widehat z)$.} 
\label{FigCoxx10}
\end{center}  
\end{figure}

The vertices of depth at most $1$ in $\widetilde{\mathscr S}_n$ are just the $1$-stack-sortable affine permutations (equivalently, the $231$-avoiding affine permutations) in $\widetilde S_n$. Since each affine sylvester class contains a unique $1$-stack-sortable affine permutation (i.e., its minimal element in the left weak order), we have the following corollary of Theorem~\ref{ThmCoxx3}. 

\begin{corollary}
Every subtree of $\widetilde{\mathscr S}_n$ is isomorphic as a rooted tree to a subtree of $\widetilde{\mathscr S}_n$ whose root has depth at most $1$. 
\end{corollary}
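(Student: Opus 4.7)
The plan is to derive the corollary as an immediate application of Theorem~\ref{ThmCoxx3} combined with the characterization of vertices of depth at most $1$ in $\widetilde{\mathscr S}_n$ given in the paragraph preceding the corollary.

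First I would fix an arbitrary subtree of $\widetilde{\mathscr S}_n$, namely the subtree with some root vertex $x \in \widetilde S_n$. The goal is to exhibit another vertex $y \in \widetilde S_n$ such that (i) $y$ has depth at most $1$ in $\widetilde{\mathscr S}_n$, and (ii) the subtree of $\widetilde{\mathscr S}_n$ with root $y$ is isomorphic as a rooted tree to the subtree with root $x$.

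The natural choice is $y = \pi_\downarrow^{\equiv_{\widetilde{\syl}}}(x)$, the unique minimum of the affine sylvester class of $x$ in the left weak order. By the observation preceding the corollary, the vertices of depth at most $1$ in $\widetilde{\mathscr S}_n$ are precisely the $1$-stack-sortable elements of $\widetilde S_n$, which in turn are precisely the elements in the image of $\pi_\downarrow^{\equiv_{\widetilde{\syl}}}$; thus $y$ satisfies condition (i). Since $x \equiv_{\widetilde{\syl}} y$ by construction, Theorem~\ref{ThmCoxx3} immediately yields condition (ii): the subtree of $\widetilde{\mathscr S}_n$ rooted at $x$ is isomorphic as a rooted tree to the subtree rooted at $y$.

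There is essentially no obstacle here; the content is entirely packaged into Theorem~\ref{ThmCoxx3}, and the corollary is just the observation that every affine sylvester class contains a distinguished element (its minimum) which happens to sit at depth at most $1$ in $\widetilde{\mathscr S}_n$. The only thing worth verifying explicitly is that $\pi_\downarrow^{\equiv_{\widetilde{\syl}}}(x)$ indeed has depth at most $1$, which follows because $\widetilde\stack(\pi_\downarrow^{\equiv_{\widetilde{\syl}}}(x)) = \pi_\downarrow^{\equiv_{\widetilde{\syl}}}(x) \cdot (\pi_\downarrow^{\equiv_{\widetilde{\syl}}}(\pi_\downarrow^{\equiv_{\widetilde{\syl}}}(x)))^{-1} = e$, since any element in the image of $\pi_\downarrow^{\equiv_{\widetilde{\syl}}}$ is its own downward projection.
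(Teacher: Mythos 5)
Your proposal is correct and matches the paper's argument exactly: the paper likewise deduces the corollary from Theorem~\ref{ThmCoxx3} together with the observation that the depth-at-most-$1$ vertices are precisely the $1$-stack-sortable affine permutations, one of which (the minimal element $\pi_\downarrow^{\equiv_{\widetilde{\syl}}}(x)$) lies in every affine sylvester class.
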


\begin{remark}
The natural analogue of Theorem~\ref{ThmCoxx3} is false for arbitrary Coxeter stack-sorting operators. For example, in $S_3$, we have $132\equiv_{\des} 231$, but $\left\lvert {\bf S}_{\equiv_{\des}}^{-1}(132)\right\rvert =1>0=\left\lvert {\bf S}_{\equiv_{\des}}^{-1}(231)\right\rvert $.   
\end{remark}

Our next goal in this section is to develop a method for computing the fertility of an arbitrary affine permutation. To do this, we first need to recall some basic facts about valid hook configurations, as discussed, for example, in \cite{DefantEngenMiller, DefantTroupes}. It will be convenient to consider permutations of arbitrary finite sets of integers. Thus, as discussed in Sections~\ref{SecStackBack} and~\ref{SecCoxeter}, a permutation of an $n$-element set $X$ is a bijection $w:[n]\to X$, which we represent in one-line notation as the word $w(1)\cdots w(n)$. Recall that we defined the stack-sorting map $\stack$ at the beginning of Section~\ref{SecStackBack} so that it operates on permutations in this more general setting. This means that it makes sense to consider the fertility $\lvert \stack^{-1}(w)\rvert $ of a permutation $w:[n]\to X$. It is immediate from the definition of $\stack$ in Section~\ref{SecStackBack} that permutations with the same standardization have the same fertility. Furthermore, the insertion algorithm described in Section~\ref{SecPermutrees} works just as well for permutations of $X$. Thus, for each permutation $w:[n]\to X$, there is a decreasing binary plane tree $\mathcal I^{-1}(w)=(T,\sigma)$, where now $\sigma$ is a bijection from $V(T)$ to $X$. We can also define the postorder reading of $\mathcal I^{-1}(w)$ to be the permutation $\sigma\circ\sigma_{\post}^{-1}:[n]\to X$, where $\sigma_{\post}:V(T)\to[n]$ is the postorder of $V(T)$. As before, $\mathcal P(\mathcal I^{-1}(w))=\stack(w)$ (this approach with non-standardized permutations is discussed in more detail in \cite{DefantTroupes}). 

Let $v$ be a permutation. When $i$ is a descent of $v$, we call the point $(i,v(i))$ a \dfn{descent top} of the plot of $v$ and call $(i+1,v(i+1))$ a \dfn{descent bottom} of the plot of $v$. A \dfn{hook} of $v$ is a rotated $\mathsf{L}$ shape connecting two points $(i,v(i))$ and $(j,v(j))$ with $i<j$ and $v(i)<v(j)$ (see Figure~\ref{Fig28}). The point $(i,v(i))$ is the \dfn{southwest endpoint} of the hook, and $(j,v(j))$ is the \dfn{northeast endpoint} of the hook. 

\begin{definition}\label{Def5}
Let $v$ be a permutation with descents $d_1<\cdots <d_k$. A \emph{valid hook configuration} of $v$ is a tuple $\mathcal H=(H_1,\ldots,H_k)$ of hooks of $v$ that satisfy the following properties: 
\begin{enumerate}
\item \label{Item1} For each $i\in[k]$, the southwest endpoint of $H_i$ is $(d_i,v(d_i))$. 
\item \label{Item2} No point in the plot of $v$ lies directly above a hook in $\mathcal H$. 
\item \label{Item3} No two hooks intersect or overlap each other unless the northeast endpoint of one is the southwest endpoint of the other. 
\end{enumerate}

Let $\VHC(v)$ denote the set of valid hook configurations of $v$.  
\end{definition} 

Suppose $\mathcal H=(H_1,\ldots,H_k)$ is a valid hook configuration of a permutation $v$ of size $n$. Draw a blue sky over the diagram depicting this valid hook configuration, and assign arbitrary distinct non-blue colors to all of the hooks. We are going to color all of the points in the plot of $v$ that are not northeast endpoints of hooks of $\mathcal H$. Imagine that each point that is not a northeast endpoint looks upward. If the point sees a hook, then it receives the same color as that hook. If it does not see a hook, then it must see the sky, so it receives the color blue. Note that if the point is the southwest endpoint of a hook, then it must look around (on the left side of) that hook when looking upward. Let $q_i$ be the number of points given the same color as $H_i$, and let $q_0$ be the number of points colored blue. Let ${\bf q}^{\mathcal H}=(q_0,\ldots,q_k)$. The plot of $v$ has $n$ points, $k$ of which did not get colored because they are northeast endpoints of hooks. Thus, ${\bf q}^{\mathcal H}$ is a composition of the integer $n-k$ into $k+1$ parts. 

\begin{figure}[ht]
\begin{center}
\includegraphics[height=7.408cm]{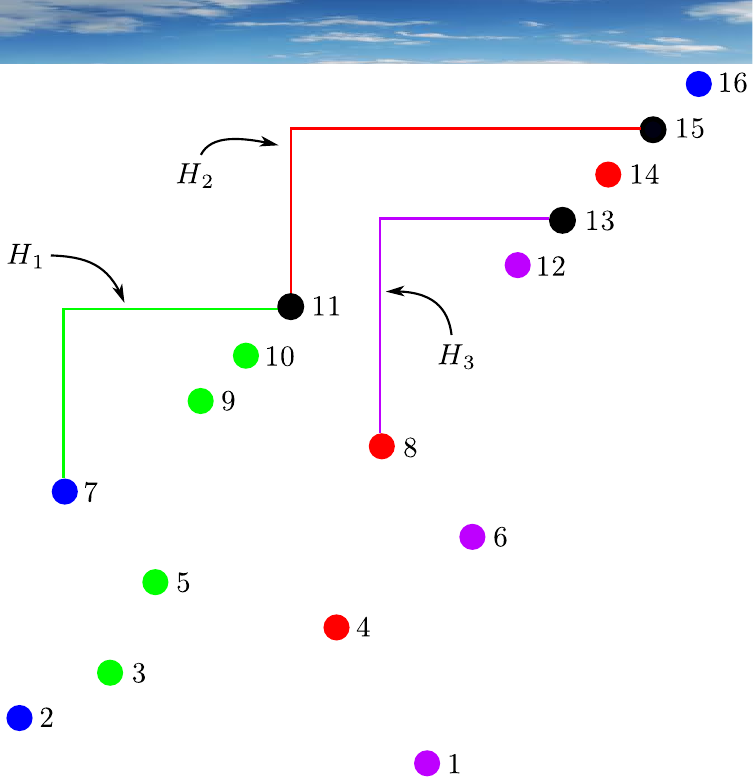}
\caption{A valid hook configuration and its induced coloring.} 
\label{Fig28}
\end{center}  
\end{figure} 

\begin{example}
Figure~\ref{Fig28} shows a valid hook configuration $\mathcal H=(H_1,H_2,H_3)$ of a permutation in $S_{16}$, along with its induced coloring. In this example, there are $3$ blue points, $4$ green points, $3$ red points, and $3$ purple points, so ${\bf q}^{\mathcal H}=(3,4,3,3)$. 
\end{example}

The following theorem provides a very useful tool for computing fertilities of permutations. It was originally proved in \cite{DefantPostorder}, though a clearer and more conceptual proof (which also applies in a much more general setting) appears in \cite{DefantTroupes}. Let $C_r$ denote the $r^\text{th}$ Catalan number $\frac{1}{r+1}\binom{2r}{r}$. Given an integer composition ${\bf q}=(q_0,\ldots,q_k)$, we write $C_{\bf q}$ for the product $\prod_{i=0}^kC_{q_i}$. 

\begin{theorem}[Fertility Formula]\label{FF}
The fertility of a permutation $v$ is given by \[\lvert \stack^{-1}(v)\rvert =\sum_{\mathcal H\in\VHC(v)}C_{{\bf q}^{\mathcal H}}.\] 
\end{theorem}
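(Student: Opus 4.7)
The plan is to reduce the problem to counting decreasing binary plane trees via Proposition~\ref{PropCoxx1}, which gives $\stack = \mathcal P \circ \mathcal I^{-1}$. Since $\mathcal I$ is a bijection between $S_n$ and the set of decreasing binary plane trees with $n$ vertices, the fertility $\lvert \stack^{-1}(v)\rvert$ equals the number of decreasing binary plane trees $(T,\sigma)$ whose postorder reading equals $v$. I would observe that once the unlabeled shape $T$ is fixed, the labeling $\sigma$ is completely forced by the requirement $\mathcal P((T,\sigma)) = v$: the vertex at postorder position $i$ must receive the label $v(i)$. So it suffices to count the shapes $T$ for which this forced labeling is actually a decreasing labeling (i.e., children receive smaller labels than their parents).

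Next, I would analyze how the descents $d_1<\cdots<d_k$ of $v$ constrain the shape $T$. For the forced labeling to be decreasing, at each descent $d_i$ the two postorder-consecutive vertices $\sigma_{\post}^{-1}(d_i)$ and $\sigma_{\post}^{-1}(d_i+1)$ must be incomparable in $T$; moreover, this incomparability is witnessed by a unique \emph{least common ancestor} whose postorder position corresponds to some point $(j, v(j))$ in the plot with $j > d_i$ and $v(j) > v(d_i)$. Encoding this least common ancestor by a hook $H_i$ with southwest endpoint $(d_i, v(d_i))$ and northeast endpoint $(j, v(j))$, I would verify that the three conditions of Definition~\ref{Def5} are equivalent to the tree-compatibility requirements: condition~(1) records that a hook is drawn from every descent top; condition~(2) reflects the fact that every vertex with postorder position strictly between $d_i$ and $j$ belongs to the left subtree of the ancestor and so lies below the hook; condition~(3) records the nesting structure of the various ancestor relationships. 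This gives a map from valid tree shapes to $\VHC(v)$.

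Conversely, given $\mathcal H \in \VHC(v)$, I would show that the shape $T$ is only partially determined by $\mathcal H$: the hook endpoints fix the ancestor skeleton along descents, but the subtrees hanging from the hook endpoints are unconstrained binary plane trees. Here is where the coloring enters. Each non-northeast-endpoint point of the plot falls under exactly one hook or under the open sky, and I would argue that each maximal monochromatic cluster constitutes the vertex set of one such free subtree. The blue cluster of size $q_0$ forms a binary plane tree attached at the top of the tree, while each cluster of size $q_i$ forms the right subtree hanging off the northeast endpoint of $H_i$. These choices are independent across color classes, and the number of binary plane trees on $r$ vertices is $C_r$, so the number of shapes compatible with $\mathcal H$ is $\prod_{i=0}^k C_{q_i} = C_{\mathbf q^{\mathcal H}}$.

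Summing over $\mathcal H \in \VHC(v)$ yields the formula. The hard part is the bijective correspondence of the second paragraph, which requires verifying both that a valid shape induces a genuine valid hook configuration and that every valid hook configuration arises this way; the delicate point is that the conditions of Definition~\ref{Def5} are exactly strong enough to carve the plot into independent Catalan regions without any hidden cross-constraints. The remaining steps are structural bookkeeping once this bijection is established.
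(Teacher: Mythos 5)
The paper does not actually prove Theorem~\ref{FF}; it is imported from \cite{DefantPostorder} and \cite{DefantTroupes}, so there is no internal argument to compare against. Your first paragraph is exactly the right starting point and matches the cited proofs: since $\stack=\mathcal P\circ\mathcal I_{\downCirc^n}^{-1}$ and the in-order reading is a bijection onto decreasing binary plane trees, $\lvert\stack^{-1}(v)\rvert$ is the number of tree shapes $T$ for which the forced labeling $v\circ\sigma_{\post}$ is a linear extension. Your second paragraph is also essentially correct, modulo one slip: the vertices at postorder positions strictly between $d_i$ and $j$ form the \emph{right} subtree of the least common ancestor, not the left one (postorder visits left subtree, right subtree, root, so position $d_i$ is the last vertex of the left subtree); the conclusion that these vertices lie below the hook is unaffected.

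The genuine gap is in your third paragraph. The color class of a hook $H_i$ is \emph{not} the vertex set of the right subtree hanging off the northeast endpoint of $H_i$: that right subtree consists of \emph{all} points at positions strictly between $d_i$ and $j_i$, which, whenever another hook $H_{i'}$ is nested inside $H_i$, includes the points colored with $H_{i'}$'s color as well as the (uncolored) northeast endpoint of $H_{i'}$. The set of points that actually see $H_i$ is that right subtree with several full subtrees excised, and such a set is not the vertex set of a subtree of $T$; so the assertion that each color class independently contributes a factor $C_{q_i}$ does not follow from the picture you describe. This is precisely the ``delicate point'' you flag, and it is where all of the work lies. The standard repair is inductive rather than bijective in one shot: peel off the hook attached to the \emph{last} descent $d_k$. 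Since there are no descents to its right, the region strictly under that hook is an increasing run, no hook can nest inside it, every choice of right subtree on those $q$ vertices is admissible, and it contributes a clean factor $C_q$; one then recurses on the permutation obtained by deleting that region (whose descent at $d_k$ has been destroyed). This is the Decomposition Lemma of \cite{DefantCounting, DefantTroupes}, whose affine analogue appears in this paper as Theorem~\ref{ADL}, and the base case is that an increasing permutation of size $m$ has fertility $C_m$ because every shape is admissible. Reorganizing your argument around that recursion (or, alternatively, making precise a contraction of each nested hook's subtree to a single vertex before invoking the Catalan count) closes the gap.
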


A simple consequence of the Fertility Formula that will be helpful to keep in mind is that a uniquely sorted permutation (i.e., a permutation with fertility $1$) must have a unique valid hook configuration. 

Let us now turn our attention back to affine permutations. Let us say a set $Y\subseteq\mathbb Z$ is \dfn{$n$-periodic} if for every integer $j$, we have $j\in Y$ if and only if $j+n\in Y$. Suppose $w\in\widetilde S_n$, and consider the decreasing affine binary plane tree $\mathcal I^{-1}(w)=\mathcal T=(T,\sigma)$. A \dfn{left-to-right maximum} of $w$ is an integer that is larger than every integer appearing to its left in $w$. Let $\cdots<m_0<m_1<m_2<\cdots$ be the left-to-right maxima of $w$. Note that the set of left-to-right maxima of $w$ is $n$-periodic. It follows directly from the insertion algorithm that constructs $\mathcal T$ from $w$ that $\sigma^{-1}(m_i)$ is a left child of $\sigma^{-1}(m_{i+1})$ for every integer $i$. Define the \dfn{infinite left branch} of $\mathcal T$ to be the collection of all the vertices $\sigma^{-1}(m_i)$ together with all the left edges between these vertices. Now let $v=\widetilde\stack(w)$. By \eqref{EqAffinePostorder}, we have $v=\mathcal P(\mathcal T)$. Every left-to-right maximum of $w$ is a left-to-right maximum of $v$. Indeed, the numbers appearing before $m_i$ in $v$ are the labels (given by $\sigma$) of the vertices in the left and right subtrees of $\sigma^{-1}(m_i)$ in $\mathcal T$, and all of these labels are necessarily smaller than $m_i$. Furthermore, if $\mathcal T_R(m_{i+1})$ denotes the (possibly empty) right subtree of $\sigma^{-1}(m_{i+1})$, then $\mathcal P(\mathcal T_R(m_{i+1}))$ is the (finite and possibly empty) permutation $z_i$ appearing between $m_i$ and $m_{i+1}$ in the affine permutation $\mathcal P(\mathcal T)=v$. In other words, if $u_i=\mathcal I(\mathcal T_R(m_{i+1}))$, then $z_i=\stack(u_i)$.  

The previous paragraph hints at how to construct each preimage $w$ of an affine permutation $v\in\widetilde S_n$ under $\widetilde\stack$. Start with $v$. Choose a nonempty $n$-periodic subset $\mathscr M=\{\cdots m_0<m_1<m_2<\cdots\}$ of the set of left-to-right maxima of $v$. This means that there is an integer $r$ such that $m_{j+r}=m_j+n$ for all $j$. For each index $i$, draw a hook $H_i$ connecting the points $(v^{-1}(m_i),m_i)$ and $(v^{-1}(m_{i+1}),m_{i+1})$ in the plot of $v$. This produces an infinite periodic chain of hooks on the plot of $v$ that we call a \dfn{skyline}. Now turn the skyline into an infinite left branch by ``unbending'' each hook. More precisely, form an infinite tree $\mathscr T$ whose edges are all left edges, and label the vertices of $\mathscr T$ with the elements of $\mathscr M$ so that the vertex with label $m_i$ is the left child of the vertex with label $m_{i+1}$ for all $i$. Let $z_i$ be the (finite) permutation whose plot lies underneath $H_i$ (not including the endpoints of $H_i$). Because $v\in\widetilde S_n$, the permutations $z_j$ and $z_{j+r}$ must have the same relative order (i.e., their standardizations are equal) for all $j$. More precisely, $z_{j+r}$ is obtained by increasing each of the entries in $z_j$ by $n$. This implies that we can choose permutations $u_i\in\stack^{-1}(z_i)$ for all $i$ in such a way that, for each integer $j$, the permutation $u_{j+r}$ is obtained by increasing each of the entries in $u_j$ by $n$. Finally, for each $i$, construct the decreasing binary plane tree $\mathcal I^{-1}(u_i)$, and attach it as the right subtree of the vertex of $\mathscr T$ with label $m_{i+1}$. After doing this for all $i$, we obtain a decreasing affine binary plane tree $\mathcal T$. Let $w=\mathcal I(\mathcal T)$. Our construction guarantees that $\widetilde\stack(w)=\mathcal P(\mathcal T)=v$. It follows from the preceding paragraph that every preimage of $v$ under $\widetilde\stack$ arises uniquely in this way.

\begin{figure}[ht]
\begin{center}
\includegraphics[height=10.144cm]{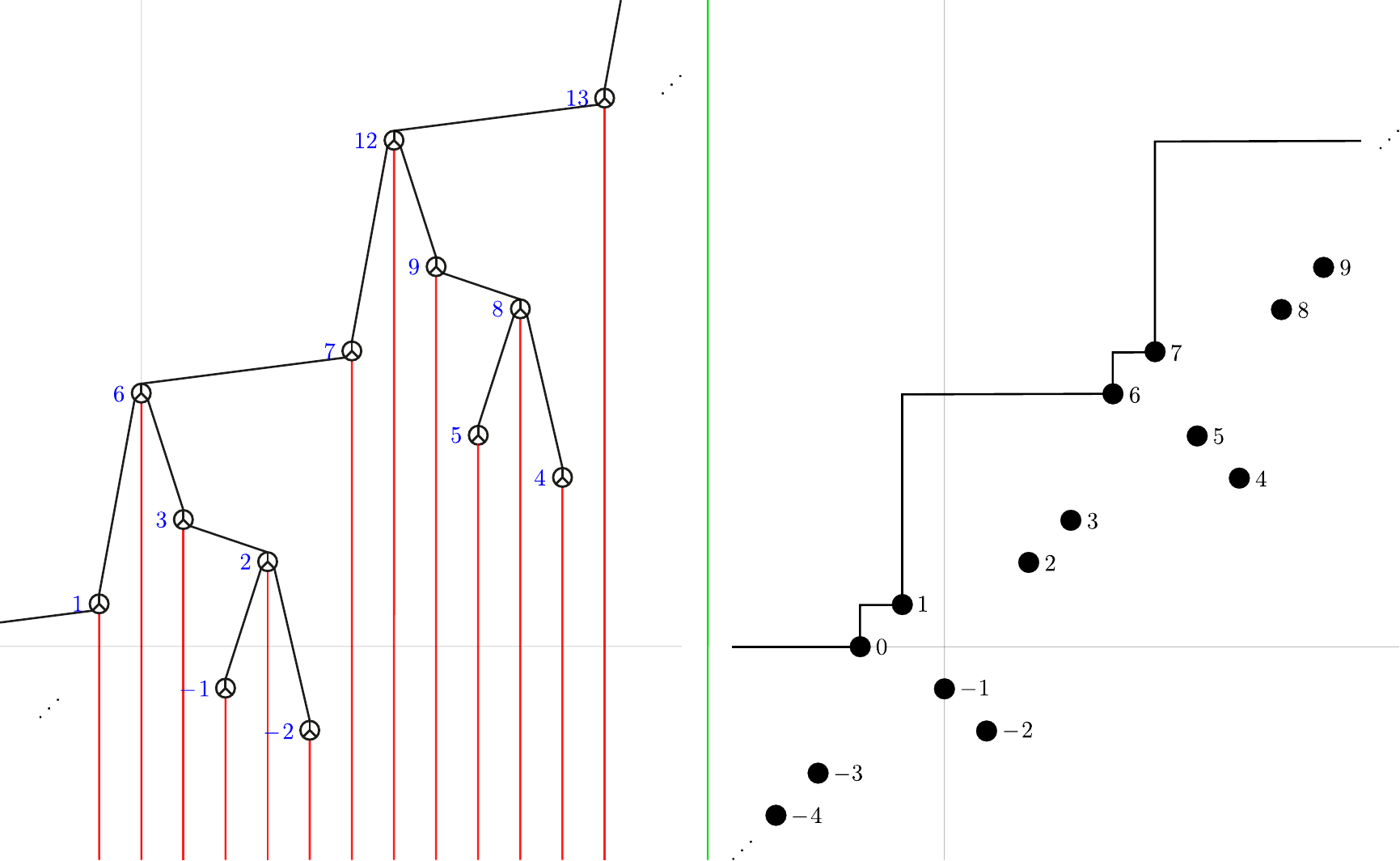}
\caption{On the left is the decreasing affine binary plane tree $\mathcal I^{-1}(w)$, where $w\in\widetilde S_6$ has window notation $[3,-1,2,-2,7,12]$. On the right is the plot of the affine permutation $v=\widetilde\stack(w)$, which has window notation $[-2,2,3,6,7,5]$, along with the skyline obtained by bending the edges in the infinite left branch of $\mathcal I^{-1}(w)$.} 
\label{FigCoxx5}
\end{center}  
\end{figure} 

\begin{example}\label{ExamCoxx2}
Let $v$ be the affine permutation in $\widetilde S_6$ with window notation $[-2,2,3,6,7,5]$. The plot of $v$ is shown on the right in Figure~\ref{FigCoxx5}. Let $\mathscr M=\{\cdots<m_0<m_1<m_2<\cdots\}$ be the $6$-periodic subset of the set of left-to-right maxima of $v$ given by $m_1=1$, $m_2=6$, and $m_{j+2}=m_j+6$ for all $j\in\mathbb Z$. We have drawn the corresponding hooks $H_i$ on the right side of Figure~\ref{FigCoxx5}. For example, $H_1$ has southwest endpoint $(-1,1)$ and northeast endpoint $(4,6)$, and $H_2$ has southwest endpoint $(4,6)$ and northeast endpoint $(5,7)$. Then $z_1=(-1)(-2)23$, and $z_2$ is the empty permutation. For each integer $j$, the permutation $z_{j+2}$ is obtained by increasing each of the entries in $z_j$ by $6$. Let us choose $u_1$ to be the permutation $3(-1)2(-2)\in\stack^{-1}(z_1)$. We are forced to choose $u_2$ to be empty. Our choices of $u_i$ for all $i\in\mathbb Z$ are then determined by the condition that $u_{j+2}$ is obtained by increasing each of the entries in $u_j$ by $6$. The decreasing affine binary plane tree $\mathcal T$ constructed from these choices is shown on the left in Figure~\ref{FigCoxx5}. The affine permutation $w=\mathcal I(\mathcal T)\in\widetilde S_6$ has window notation $[3,-1,2,-2,7,12]$. Finally, $\widetilde\stack(w)=\mathcal P(\mathcal T)=v$. 
\end{example} 

The discussion above yields an affine analogue of the Decomposition Lemma from \cite{DefantCounting, DefantTroupes}. In order to state it precisely, we fix some notation. Suppose $v\in\widetilde S_n$. Let $\Sky(v)$ denote the collection of all nonempty $n$-periodic subsets of the set of left-to-right maxima of $v$. The notation comes from the fact that each set $\mathscr M=\{\cdots m_0<m_1<m_2<\cdots\}$ in $\Sky(v)$ corresponds to a skyline of $v$; this is the infinite connected chain of hooks $\ldots,H_0,H_1,H_2,\ldots$, where $H_i$ has southwest endpoint $(v^{-1}(m_i),m_i)$ and northeast endpoint $(v^{-1}(m_{i+1}),m_{i+1})$. Let us make the convention that the indices of the elements of $\mathscr M$ are chosen so that $v^{-1}(m_1)\leq 1<v^{-1}(m_2)$. For such a set $\mathscr M$, we let $r(\mathscr M)$ be the index such that $m_{j+r(\mathscr M)}=m_j+n$ for all integers $j$. Also, let $z_i^{\mathscr M}$ be the permutation whose plot lies underneath $H_i$ (not including the endpoints of $H_i$). Note that $z_i^{\mathscr M}$ could be the empty permutation $\varepsilon$, in which case $\stack^{-1}(\varepsilon)=\{\varepsilon\}$. 

\begin{theorem}[Affine Decomposition Lemma]\label{ADL}
For every $v\in\widetilde S_n$, we have \[\lvert \widetilde\stack^{-1}(v)\rvert =\sum_{\mathscr M\in\Sky(v)}\prod_{i=1}^{r(\mathscr M)}\left\lvert \stack^{-1}\left(z_i^{\mathscr M}\right)\right\rvert .\]
\end{theorem}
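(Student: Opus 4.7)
The plan is to convert the constructive discussion immediately preceding the theorem statement into a formal bijective enumeration. More precisely, I would exhibit a bijection between $\widetilde\stack^{-1}(v)$ and the set of pairs $(\mathscr M, (u_i)_{i\in\mathbb Z})$, where $\mathscr M\in\Sky(v)$ and $(u_i)_{i\in\mathbb Z}$ is a sequence of (finite) permutations satisfying: (a) $u_i\in\stack^{-1}(z_i^{\mathscr M})$ for every $i$, and (b) for every $j\in\mathbb Z$, the permutation $u_{j+r(\mathscr M)}$ is obtained from $u_j$ by adding $n$ to each entry. Once this bijection is established, counting the admissible tuples $(u_i)_{i\in\mathbb Z}$ for a fixed $\mathscr M$ amounts to choosing $u_1,\ldots,u_{r(\mathscr M)}$ freely within their respective preimage sets, because condition (b) forces every other $u_i$. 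Since $\stack$ is invariant under shifting all entries of a permutation by a constant, condition (a) is automatically preserved outside the fundamental window, so the count for fixed $\mathscr M$ is exactly $\prod_{i=1}^{r(\mathscr M)}\lvert \stack^{-1}(z_i^{\mathscr M})\rvert$. Summing over $\mathscr M\in\Sky(v)$ would then give the claimed formula.

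For the forward direction of the bijection, given $w\in\widetilde\stack^{-1}(v)$, I would set $\mathcal T=\mathcal I^{-1}(w)=(T,\sigma)$ and read off $\mathscr M$ as the set of labels on the infinite left branch of $\mathcal T$; the paragraph preceding the theorem has already verified that $\mathscr M$ is an $n$-periodic subset of the left-to-right maxima of $v=\mathcal P(\mathcal T)$, hence lies in $\Sky(v)$. I would then define $u_i:=\mathcal I(\mathcal T_R(m_{i+1}))$, where $\mathcal T_R(m_{i+1})$ is the right subtree at the infinite-branch vertex with label $m_{i+1}$. The discussion preceding the theorem shows that $\stack(u_i)=z_i^{\mathscr M}$ (i.e.\ (a) holds), and the $n$-periodicity of $\mathcal T$, which follows from $w\in\widetilde S_n$, immediately yields (b).

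For the reverse direction, given $(\mathscr M,(u_i))$ satisfying (a) and (b), I would build $\mathcal T$ by first constructing the infinite left branch labelled by $\mathscr M$ and then attaching $\mathcal I^{-1}(u_i)$ as the right subtree at the vertex labelled $m_{i+1}$. Condition (b) guarantees that $\mathcal T$ is $n$-periodic, so $w:=\mathcal I(\mathcal T)$ lies in $\widetilde S_n$; condition (a) together with the identity $\stack=\mathcal P\circ\mathcal I^{-1}$ and formula \eqref{EqAffinePostorder} gives $\widetilde\stack(w)=\mathcal P(\mathcal T)=v$. That these two maps are inverse to each other is essentially tautological from the insertion/reading construction.

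The main obstacle, and the only place requiring care, is verifying that the two conventions in the setup are compatible: namely, the indexing convention $v^{-1}(m_1)\leq 1<v^{-1}(m_2)$ (which canonically pins down the enumeration of $\mathscr M$) must match the way the infinite left branch of $\mathcal T$ is indexed in the forward direction, so that the fundamental window $\{u_1,\ldots,u_{r(\mathscr M)}\}$ appears only once in the product. I would check this by tracking which right subtree sits above the position $1$ in $w$; the $n$-periodicity of both $v$ and $\mathscr M$ then ensures the window is unambiguously defined and that the product indexed by $i\in\{1,\ldots,r(\mathscr M)\}$ is exactly one period of the infinite product $\prod_{i\in\mathbb Z}\lvert\stack^{-1}(z_i^{\mathscr M})\rvert$ quotiented by the periodicity constraint.
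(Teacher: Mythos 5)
Your proposal is correct and follows essentially the same route as the paper: the paper's argument for the Affine Decomposition Lemma is precisely the bijection you describe between preimages $w$ and pairs $(\mathscr M,(u_i))$, with $\mathscr M$ the ($n$-periodic) set of left-to-right maxima of $w$ read off the infinite left branch of $\mathcal I^{-1}(w)$, the $u_i$ the in-order readings of the right subtrees, and the reverse map given by unbending the skyline and attaching the trees $\mathcal I^{-1}(u_i)$. The periodicity constraint reducing the count to a product over one fundamental window, and the care about the indexing convention $v^{-1}(m_1)\leq 1<v^{-1}(m_2)$, likewise mirror the paper's discussion.
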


If we want to use the Affine Decomposition Lemma to compute the fertility of an affine permutation $v\in\widetilde S_n$, then we need to be able to compute the fertilities of the finite permutations $z_i^{\mathscr M}$ appearing in the formula. We can do this using valid hook configurations via the Fertility Formula (Theorem~\ref{FF}). We first choose a valid hook configuration $\mathcal H_i$ for each $z_i^{\mathscr M}$ with $1\leq i\leq r(\mathscr M)$. Since each permutation $z_j^{\mathscr M}$ always has the same relative order as $z_{j+r(\mathscr M)}^\mathscr M$, we can copy the hooks from $\mathcal H_i$ onto the permutations $z_{i+\beta r(\mathscr M)}^{\mathscr M}$ for all $\beta\in\mathbb Z$ and all $1\leq i\leq r(\mathscr M)$ to produce a periodic configuration of hooks. Together with the hooks in the skyline, this produces a diagram $\mathcal H$ that we call an \dfn{affine valid hook configuration}. When we refer to the \dfn{skyline} of $\mathcal H$, we mean the skyline of $v$ used to construct $\mathcal H$. Let ${\bf q}^{\mathcal H}$ be the concatenation of the compositions ${\bf q}^{\mathcal H_1},\ldots,{\bf q}^{\mathcal H_{r(\mathscr M)}}$. Let $\AVHC(v)$ denote the set of affine valid hook configurations of $v$. By combining the Fertility Formula (Theorem~\ref{FF}) with the Affine Decomposition Lemma (Theorem~\ref{ADL}), we obtain the following Affine Fertility Formula. 

\begin{theorem}[Affine Fertility Formula]\label{AFF}
For every $v\in\widetilde S_n$, we have \[\lvert \widetilde\stack^{-1}(v)\rvert =\sum_{\mathcal H\in\AVHC(v)}C_{{\bf q}^{\mathcal H}}.\] 
\end{theorem}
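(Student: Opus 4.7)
The plan is to derive the Affine Fertility Formula by inserting the (classical) Fertility Formula into the Affine Decomposition Lemma, then recognizing the resulting double sum as a sum over affine valid hook configurations. The proof is essentially bookkeeping, so the main work is pinning down the bijection that makes the indices match cleanly.

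First I would start from the Affine Decomposition Lemma (Theorem~\ref{ADL}), which gives
\[
\lvert \widetilde\stack^{-1}(v)\rvert =\sum_{\mathscr M\in\Sky(v)}\prod_{i=1}^{r(\mathscr M)}\left\lvert \stack^{-1}\left(z_i^{\mathscr M}\right)\right\rvert.
\]
For each finite permutation $z_i^{\mathscr M}$, I apply the Fertility Formula (Theorem~\ref{FF}) to write $\lvert \stack^{-1}(z_i^{\mathscr M})\rvert = \sum_{\mathcal H_i\in\VHC(z_i^{\mathscr M})} C_{{\bf q}^{\mathcal H_i}}$ (with the convention that the empty permutation contributes $1 = C_{\varepsilon}$). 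Substituting and expanding the product of sums gives
\[
\lvert \widetilde\stack^{-1}(v)\rvert =\sum_{\mathscr M\in\Sky(v)}\,\,\sum_{(\mathcal H_1,\ldots,\mathcal H_{r(\mathscr M)})}\,\prod_{i=1}^{r(\mathscr M)}C_{{\bf q}^{\mathcal H_i}},
\]
where the inner sum ranges over tuples with $\mathcal H_i\in\VHC(z_i^{\mathscr M})$ for each $i$.

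Next I would invoke the construction of affine valid hook configurations given just before Theorem~\ref{AFF}: an element $\mathcal H\in\AVHC(v)$ is, by definition, a skyline $\mathscr M\in\Sky(v)$ together with a choice of $\mathcal H_i\in\VHC(z_i^{\mathscr M})$ for $1\le i\le r(\mathscr M)$ (the remaining $\mathcal H_{i+\beta r(\mathscr M)}$ being forced by $n$-periodicity). This gives a bijection between pairs $(\mathscr M,(\mathcal H_1,\ldots,\mathcal H_{r(\mathscr M)}))$ indexing the double sum above and elements $\mathcal H$ of $\AVHC(v)$. Since the composition ${\bf q}^{\mathcal H}$ was defined as the concatenation of ${\bf q}^{\mathcal H_1},\ldots,{\bf q}^{\mathcal H_{r(\mathscr M)}}$, and since $C_{\bf q}$ is multiplicative on concatenation of compositions (because $C_{\bf q}=\prod_j C_{q_j}$), we obtain $\prod_{i=1}^{r(\mathscr M)}C_{{\bf q}^{\mathcal H_i}}=C_{{\bf q}^{\mathcal H}}$. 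Combining the bijection with this identity yields the claimed formula.

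The main obstacle, such as it is, is just verifying that the enumeration is clean: that every $\mathcal H\in\AVHC(v)$ arises from exactly one pair $(\mathscr M,(\mathcal H_1,\ldots,\mathcal H_{r(\mathscr M)}))$, and that the skyline $\mathscr M$ underlying $\mathcal H$ is uniquely recoverable (it is the set of hooks stretching between consecutive left-to-right maxima, distinguishable from hooks in the $\mathcal H_i$ because skyline hooks form the unique infinite connected chain in $\mathcal H$). Once this identification is recorded and the convention about indexing the fundamental domain $1\le i\le r(\mathscr M)$ is checked to agree with the one used in $\Sky(v)$, the formula follows.
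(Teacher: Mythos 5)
Your proposal is correct and matches the paper's argument: the paper obtains the Affine Fertility Formula exactly by substituting the Fertility Formula (Theorem~\ref{FF}) into the Affine Decomposition Lemma (Theorem~\ref{ADL}), with $\AVHC(v)$ defined precisely as the set of diagrams built from a skyline together with a periodic choice of valid hook configurations of the $z_i^{\mathscr M}$, so the double sum collapses to a sum over $\AVHC(v)$ by multiplicativity of $C_{\bf q}$ under concatenation. Your added remark that the skyline is recoverable as the unique chain of hooks not lying under any other hook is the same observation the paper records immediately after the theorem.
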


\begin{figure}[ht]
\begin{center}
\includegraphics[height=8.616cm]{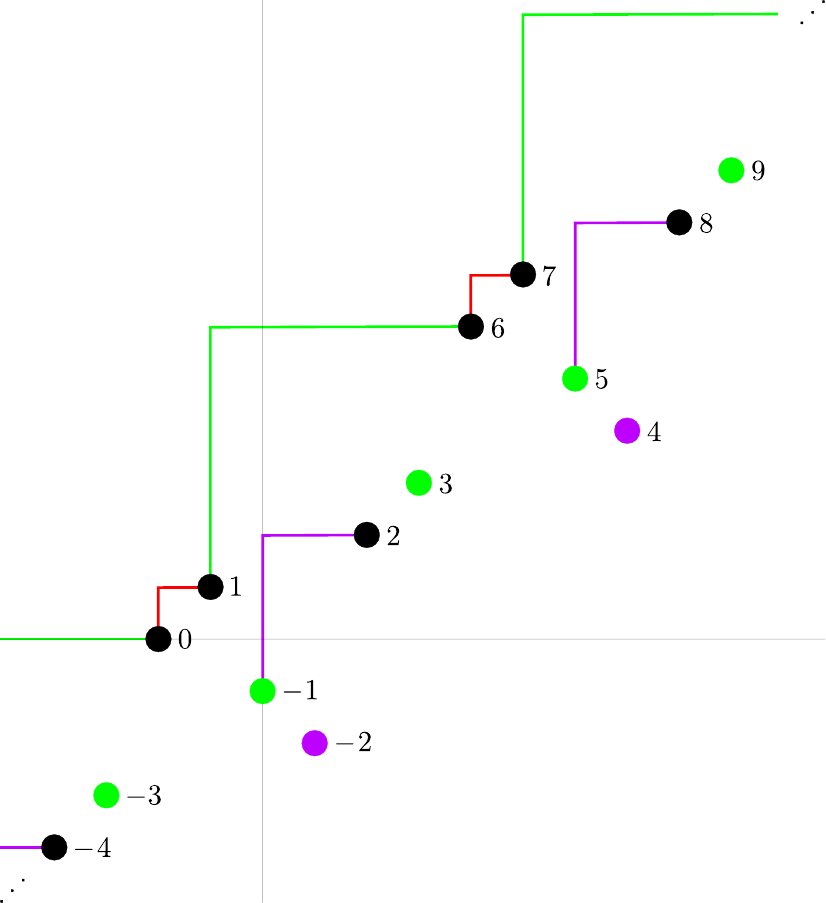}
\caption{An affine valid hook configuration and its induced coloring.} 
\label{FigCoxx6}
\end{center}  
\end{figure} 

\begin{example}
Preserve the notation from Example~\ref{ExamCoxx2}. We have $z_1=(-1)(-2)23$ and $z_2=\varepsilon$. Consider the valid hook configuration $\mathcal H_1$ of $z_1$ that consists of a single hook whose southwest endpoint has height $-1$ and whose northeast endpoint has height $2$. If we add this valid hook configuration to the image shown on the right of Figure~\ref{FigCoxx5}, and then repeat it $6$-periodically, we obtain the affine valid hook configuration of $v$ shown in Figure~\ref{FigCoxx6}. Note that $\mathcal H_2$ is the empty valid hook configuration of the empty permutation. The composition induced by the valid hook configuration ${\bf q}^{\mathcal H_1}$ is $(2,1)$. The composition induced by $\mathcal H_2$ is the empty composition. The concatenation of ${\bf q}^{\mathcal H_1}=(2,1)$ and the empty composition ${\bf q}^{\mathcal H_2}$ is ${\bf q}^{\mathcal H}=(2,1)$. This corresponds to the fact that, among the points $(1,v(1)),\ldots,(6,v(6))$, there are $2$ colored green and $1$ colored purple in Figure~\ref{FigCoxx6}. Hence, the affine valid hook configuration of $v$ shown in that figure contributes the term $C_{(2,1)}=C_2C_1=2$ to the sum in the Affine Fertility Formula. 
\end{example}

Before we proceed, let us pause to collect some observations. Consider an affine valid hook configuration $\mathcal H$ of an affine permutation $v$. The hooks in the skyline of $\mathcal H$ are precisely the hooks that do not lie underneath any other hooks. Also, just as for ordinary valid hook configurations, no hook in $\mathcal H$ passes underneath a point in the plot of $v$. No two hooks in $\mathcal H$ intersect or overlap each other, except when the southwest endpoint of one hook coincides with the northeast endpoint of the other. Every descent top of the plot of $v$ must be a southwest endpoint of a hook in $\mathcal H$. The main difference between ordinary and affine valid hook configurations is that, in the affine setting, not all southwest endpoints of hooks need to be descent tops. Namely, it is possible to have a southwest endpoint of a hook in the skyline of $\mathcal H$ that is not a descent top of the plot of $v$. However, southwest endpoints of hooks that are \emph{not} in the skyline \emph{are} required to be descent tops. For our final observation, let $\mathscr M\in\Sky(v)$ be the set corresponding to the skyline of $\mathcal H$. Suppose that each of the permutations $z_i^{\mathscr M}$ defined above is uniquely sorted. By the Fertility Formula (Theorem~\ref{FF}), this implies that each of the compositions ${\bf q}^{\mathcal H_i}$ has all its parts equal to $1$, so the composition ${\bf q}^{\mathcal H}$ has all its parts equal to $1$.

We now apply the Affine Decomposition Lemma and Affine Fertility Formula to characterize (for even $n$) which affine permutations attain the maximum in Theorem~\ref{PropCoxx4}. Let us say an affine permutation in $\widetilde S_n$ is \dfn{uniquely sorted} if its fertility is $1$. 

\begin{theorem}\label{ThmCoxx4}
An affine permutation in $\widetilde S_n$ is uniquely sorted if and only if it is in the image of $\widetilde\stack$ and has exactly $\frac{n}{2}$ descents. In particular, there are no uniquely sorted affine permutations in $\widetilde S_n$ when $n$ is odd. 
\end{theorem}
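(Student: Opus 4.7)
The plan is to apply the Affine Fertility Formula (Theorem~\ref{AFF}) together with Proposition~\ref{PropUniquely} to translate the condition $|\widetilde\stack^{-1}(v)| = 1$ into combinatorial constraints on the unique AVHC through which $v$ arises, and then to read off $\des(v)$ from those constraints. Throughout, for an AVHC $\mathcal H$ of $v$ with skyline $\mathscr M$, components $z_j := z_j^{\mathscr M}$, and per-component VHCs $\mathcal H_j$, I set $r = r(\mathscr M)$, $d_j = \des(z_j)$, let $b_{\mathcal H}$ be the number of skyline SW endpoints in one fundamental period that are descent tops of $v$, and let $k$ be the number of empty $z_j$'s in one period. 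A direct inspection of the descent structure gives $\des(v) = \sum_j d_j + b_{\mathcal H}$.

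For the forward direction, suppose $v$ is uniquely sorted. Theorem~\ref{AFF} yields a unique AVHC $\mathcal H$ with $C_{\mathbf q^{\mathcal H}} = 1$, so every part of $\mathbf q^{\mathcal H}$ is $0$ or $1$, and uniqueness of $\mathcal H$ combined with Theorem~\ref{FF} forces each $z_j$ to be uniquely sorted. I next rule out $k \ge 1$ and $b_{\mathcal H} < r$ by exhibiting alternative AVHCs. If some $z_j$ is empty, delete the orbit $\{m_j + in : i \in \mathbb Z\}$ from the skyline and append the removed element $m_j$ to $z_{j-1}$; since $m_j$ is the new maximum of that component and appending a maximum preserves the existence of a valid hook configuration (the original hooks of $z_{j-1}$ do not span the new rightmost position), this produces a distinct AVHC and contradicts uniqueness. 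If some $m_j$ is not a descent top, then the first entry $m'$ of $z_j$ satisfies $m' > m_j$ and is a LTR maximum of $v$, and I insert the orbit of $m'$ into the skyline. The post-$m'$ component equals $z_j$ with its first entry removed, which lies in the image of $\stack$ because of a short coloring argument on the unique VHC of $z_j$: since the composition $\mathbf q^{\mathcal H_j}$ consists of $1$'s, no hook can cover position $1$, so if the first entry of $z_j$ were not a descent top, the second entry could not be colored by sky or by any hook, a contradiction; hence the first entry of $z_j$ is a descent top, and deleting the first hook and shifting positions yields a valid VHC of the shortened permutation. The resulting AVHC is again distinct from $\mathcal H$. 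Thus $k = 0$ and $b_{\mathcal H} = r$. Proposition~\ref{PropUniquely} then makes each $|z_j|$ odd with $d_j = (|z_j|-1)/2$, so $n - r = \sum_j |z_j|$ is a sum of $r$ odd positive integers, forcing $n$ even, and
\[
\des(v) = \sum_j \frac{|z_j|-1}{2} + r = \frac{n-r}{2} - \frac{r}{2} + r = \frac{n}{2}.
\]

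For the converse, suppose $v$ is in the image of $\widetilde\stack$ and $\des(v) = n/2$; take any AVHC $\mathcal H$. Each $z_j$ is in the image of $\stack$, so $d_j \le \lfloor(|z_j|-1)/2\rfloor$ when $|z_j|\ge 1$, and an empty $z_j$ cannot make its preceding skyline SW endpoint a descent top, so $b_{\mathcal H} \le r - k$. Combining these bounds yields
\[
\des(v) \le \sum_{|z_j|\ge 1}\frac{|z_j|-1}{2} + (r-k) = \frac{n-k}{2}.
\]
Equality with $n/2$ forces $k = 0$, each $|z_j|$ odd with $d_j = (|z_j|-1)/2$, and $b_{\mathcal H} = r$; Proposition~\ref{PropUniquely} then makes each $z_j$ uniquely sorted, so $C_{\mathbf q^\mathcal H} = 1$ and by Theorem~\ref{AFF} we have $|\widetilde\stack^{-1}(v)| = |\AVHC(v)|$. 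The last step is to show $|\AVHC(v)| = 1$, equivalently that the skyline is uniquely determined, which is the main obstacle of the proof: if $\mathscr M \neq \mathscr M'$ are two valid skylines and $m \in \mathscr M \setminus \mathscr M'$, then $m$ is a LTR maximum descent top of $v$ lying in the interior of some uniquely sorted $z_{j'}'$ (relative to $\mathscr M'$), and one must use the rigidity of the uniquely sorted structure of $z_{j'}'$---via a parity analysis of the putative split position showing that at least one of the two pieces produced by splitting $z_{j'}'$ at $m$ exceeds the maximal descent count allowed for a permutation of its size in the image of $\stack$---to reach a contradiction.
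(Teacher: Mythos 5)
Your overall architecture matches the paper's: use the Affine Fertility Formula and Proposition~\ref{PropUniquely} to translate unique sortedness into ``no empty components, every skyline point a descent top, every component uniquely sorted,'' and then count descents. The forward direction and the counting half of the converse are correct; indeed, your bound $\des(v)\le\frac{n-k}{2}$ obtained from $b_{\mathcal H}\le r-k$ is cleaner than the paper's chain of inequalities, and your coloring argument for why skyline southwest endpoints must be descent tops supplies a justification the paper leaves implicit. (Reducing uniqueness of the affine valid hook configuration to uniqueness of the skyline is also fine, since uniquely sorted components have unique valid hook configurations.) The problem is the last step of the converse, showing $\lvert\AVHC(v)\rvert=1$, which is where the real work of the theorem lies; your sketch does not close it.

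Two concrete difficulties with the proposed splitting argument. First, the bound $\lvert\Des(x)\rvert\le\lfloor\frac{\lvert x\rvert-1}{2}\rfloor$ from \eqref{EqmaxDes} applies only to permutations in the image of $\stack$, and the two pieces $A$ and $B$ obtained by cutting the component $z_{j'}'$ of $\mathscr M'$ at an element $m\in\mathscr M\setminus\mathscr M'$ are not components of either skyline, so nothing forces them to lie in the image of $\stack$; without that, ``exceeds the maximal descent count'' is not a contradiction. Second, even granting the bound, the parity analysis fails in general: writing $z_{j'}'=A\,m\,B$ with $\lvert A\rvert=a$ and $\lvert B\rvert=b$, one computes $\des(A)+\des(B)=\frac{(a-1)+(b-1)}{2}$ (every entry of $A$ lies below the left-to-right maximum $m$, and the position of $m$ is a descent internal to $z_{j'}'$), and when $a$ and $b$ are both odd this exactly equals $\lfloor\frac{a-1}{2}\rfloor+\lfloor\frac{b-1}{2}\rfloor$, so neither piece is forced over its bound. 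The paper's uniqueness argument is of a different, global nature: it first uses $\des(v)=\frac{n}{2}$ to show that the descent bottoms and the northeast endpoints of hooks partition the entire plot of $v$, so that the sets of southwest and northeast endpoints are the same for every affine valid hook configuration of $v$; it then takes a descent top at which two configurations attach different hooks, with one hook nested under the other, and counts northeast endpoints in the region under the inner hook to reach a contradiction. Some endpoint-matching argument of this kind (or an honest induction) is needed; the local split-and-count-descents idea as stated does not suffice.
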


\begin{proof}
First, suppose $v\in\widetilde S_n$ is uniquely sorted. It follows from the Affine Fertility Formula (Theorem~\ref{AFF}) that $v$ has a unique affine valid hook configuration $\mathcal H$. Let $\mathscr M=\{\cdots<m_0<m_1<m_2<\cdots\}\in\Sky(v)$ be the set of left-to-right maxima of $v$ corresponding to the skyline of $\mathcal H$. Let $H_i$ be the hook of $\mathcal H$ with endpoints $(v^{-1}(m_i),m_i)$ and $(v^{-1}(m_{i+1}),m_{i+1})$, and let $z_i=z_i^{\mathscr M}$ be the permutation whose plot lies underneath $H_i$. Let $r=r(\mathscr M)$ be the integer such that $m_{j+r}=m_j+n$ for all integers $j$. Suppose, by way of contradiction, that there is an integer $i$ such that $z_i$ is empty. By periodicity, $z_{i+\beta r}$ is empty for all integers $\beta$. For each integer $\beta$, delete the hooks $H_{i+\beta r-1}$ and $H_{i+\beta r}$, and replace them with a single hook that has southwest endpoint $(v^{-1}(m_{i+\beta r-1}),m_{i+\beta r-1})$ (the former southwest endpoint of $H_{i+\beta r-1}$) and northeast endpoint $(v^{-1}(m_{i+\beta r+1}),m_{i+\beta r+1})$ (the former northeast endpoint of $H_{i+\beta r+1}$). This results in a new affine valid hook configuration of $v$, contradicting the uniqueness of $\mathcal H$. Hence, none of the permutations $z_i$ are empty. It follows that each of the points $(v^{-1}(m_i),m_i)$ in the skyline of $\mathcal H$ is a descent top of the plot of $v$. Now let $n_i$ be the number of entries of $z_i$. The sum $\sum_{i=1}^r(n_i+1)$ counts the entries in $z_1,\ldots,z_r$ and the entries $m_1,\ldots,m_r$, so it is equal to $n$. It follows from the Affine Decomposition Lemma (Theorem~\ref{ADL}) that \[1=\lvert \widetilde\stack^{-1}(v)\rvert =\prod_{i=1}^r\left\lvert \stack^{-1}(z_i)\right\rvert ,\] so each permutation $z_i$ is uniquely sorted. We have shown that each $z_i$ is nonempty, so each $n_i$ is positive. According to Proposition~\ref{PropUniquely}, each permutation $z_i$ has exactly $\frac{n_i-1}{2}$ descents (in particular, $n_i$ is odd). Therefore, the total number of right descents of $v$ is $|\{v^{-1}(m_1),\ldots,v^{-1}(m_r)\}|+\sum_{i=1}^r\frac{n_i-1}{2}=r+\sum_{i=1}^r\frac{n_i-1}{2}=\frac{1}{2}\sum_{i=1}^r(n_i+1)=\frac{n}{2}$. 

\medskip

To prove the converse, assume that $v\in\widetilde S_n$ is in the image of $\widetilde\stack$ and has exactly $\frac{n}{2}$ right descents. Because $v$ is in the image of $\widetilde\stack$, the Affine Fertility Formula (Theorem~\ref{AFF}) tells us that it has an affine valid hook configuration $\mathcal H$. Let $\mathscr M=\{\cdots<m_0<m_1<m_2<\cdots\}\in\Sky(v)$, $H_i$, $z_i$, $r$, and $n_i$ have the same meanings as in the previous paragraph. Let $Y$ be the set of indices $i\in[r]$ such that the southwest endpoint of $H_i$ is a descent top. Let $Y'$ be the set of indices $i\in[r]$ such that $z_i$ is nonempty. Note that $Y\subseteq Y'$. The hooks in $\mathcal H$ that lie underneath $H_i$ form a valid hook configuration of $z_i$, so it follows from the Fertility Formula (Theorem~\ref{FF}) that $z_i$ is in the image of $\stack$. If $i\in Y'$, then since $z_i$ is a nonempty permutation of size $n_i$ in the image of $\stack$, we have $\lvert \Des(z_i)\rvert \leq\frac{n_i-1}{2}$ by \eqref{EqmaxDes}. The number of right descents of $v$ is 
\begin{equation}\label{EqCoxx5}
\lvert Y\rvert +\sum_{i\in Y'}\lvert \Des(z_i)\rvert \leq r+\sum_{i\in Y'}\frac{n_i-1}{2}\leq r+\sum_{i=1}^r\frac{n_i-1}{2}=\frac{1}{2}\sum_{i=1}^r(n_i+1)=\frac{n}{2}.
\end{equation} However, our assumption is that $v$ has exactly $\frac{n}{2}$ right descents, so the inequalities in \eqref{EqCoxx5} must be equalities. It follows that $Y=Y'=[r]$ and $\lvert \Des(z_i)\rvert =\frac{n_i-1}{2}$ for all $i\in[r]$. By periodicity, we conclude that every southwest endpoint of a hook in $\mathcal H$ is a descent top of the plot of $v$ (this is always true of southwest endpoints not in the skyline) and that every permutation $z_i$ for $i\in\mathbb Z$ has $\frac{n_i-1}{2}$ descents. According to Proposition~\ref{PropUniquely}, each of the permutations $z_i$ for $i\in\mathbb Z$ is uniquely sorted. It follows from the last observation stated immediately before this theorem that the composition ${\bf q}^{\mathcal H}$ has all its parts equal to $1$. Therefore, if we can prove that $\mathcal H$ is the only affine valid hook configuration of $v$, then it will follow from the Affine Fertility Formula that $\lvert \widetilde\stack^{-1}(v)\rvert =C_{{\bf q}^{\mathcal H}}=1$, as desired.

Let $G$ be the set of points $(i,v(i))$ such that $i\in[n]$. Let $\DB$ denote the set of descent bottoms of the plot of $v$. Let $\mathfrak N$ be the set of northeast endpoints of hooks in $\mathcal H$. If there were a point $(i,v(i))\in\DB\cap\,\mathfrak N$, then the hook with northeast endpoint $(i,v(i))$ would pass underneath the point $(i-1,v(i-1))$, contradicting the properties of an affine valid hook configuration. Therefore, $\DB$ and $\mathfrak N$ are disjoint. The assumption that $v$ has $\frac{n}{2}$ right descents implies that $\lvert \DB\cap\,G\rvert =\frac{n}{2}$. Furthermore, every descent top of the plot of $v$ is a southwest endpoint of a hook in $\mathcal H$, so we must have $\lvert \mathfrak N\cap G\rvert \geq\frac{n}{2}$. Since $(\DB\cap\,G)\cup(\mathfrak N\cap G)\subseteq G$ and $\lvert G\rvert =n$, we must have $\lvert \mathfrak N\cap G\rvert =\frac{n}{2}$. Thus, $\DB\cap \,G$ and $\mathfrak N\cap G$ form a partition of the set $G$. By periodicity, we find that $\DB$ and $\mathfrak N$ form a partition of the set of all points in the plot of $v$. 

Recall that our goal is to prove that $\mathcal H$ is the unique affine valid hook configuration of $v$. Suppose instead that there is some other $\mathcal H'\in\AVHC(v)$. Let $\mathfrak N'$ be the set of northeast endpoints of hooks in $\mathcal H'$. We have seen that the southwest endpoints of hooks in $\mathcal H$ are precisely the descent tops of the plot of $v$ and that $\mathfrak N=\{(i,v(i)):i\in\mathbb Z\}\setminus\DB$. Applying the same arguments to $\mathcal H'$ instead of $\mathcal H$ shows that the set of southwest endpoints of hooks in $\mathcal H'$ is also equal to the set of descent tops of the plot of $v$ and that $\mathfrak N'=\{(i,v(i)):i\in\mathbb Z\}\setminus\DB=\mathfrak N$. Since $\mathcal H\neq\mathcal H'$, there must be a descent top $(d,v(d))$ such that the hook $H$ of $\mathcal H$ with southwest endpoint $(d,v(d))$ is not equal to the hook $H'$ of $\mathcal H'$ with southwest endpoint $(d,v(d))$. By switching the roles of $\mathcal H$ and $\mathcal H'$ if necessary, we may assume that $H$ lies underneath $H'$ (i.e., the northeast endpoint of $H'$ is higher than that of $H$). Let $(j,v(j))$ be the northeast endpoint of $H$. Let $Z$ be the set of points lying underneath $H$ (not including the endpoints of $H$), and let $Z^*=Z\cup\{(j,v(j))\}$. Observe that the leftmost point in $Z^*$ is $(d+1,v(d+1))$, which is immediately to the right of the southwest endpoint of $H'$. This implies that every hook of $\mathcal H'$ whose northeast endpoint lies in $Z^*$ has its southwest endpoint in $Z$. Therefore, $\lvert \mathfrak N'\cap Z^*\rvert $ is at most the number of descent tops in $Z$. The number of descent tops in $Z$ is the number of hooks of $\mathcal H$ whose endpoints are in $Z$, which is also equal to $\lvert \mathfrak N\cap Z\rvert $. Since $\mathfrak N=\mathfrak N'$, this implies that $\lvert \mathfrak N\cap Z^*\rvert =\lvert \mathfrak N'\cap Z^*\rvert \leq \lvert \mathfrak N\cap Z\rvert $. This is our desired contradiction because $\mathfrak N\cap Z^*=(\mathfrak N\cap Z)\cup\{(j,v(j))\}$.    
\end{proof}

It is not difficult to prove that every affine sylvester class in $\widetilde S_n$ is infinite when $n\geq 2$. Therefore, it follows from Theorems~\ref{ThmCoxx3} and~\ref{ThmCoxx4} that the number of uniquely sorted affine permutations in $\widetilde S_n$ is either $0$ or $\infty$, depending on the parity of $n$. This means that the na\"ive affine analogue of the fact that uniquely sorted permutations are enumerated by Lassalle's sequence \cite{DefantEngenMiller} is not so interesting from an enumerative point of view. However, we can still obtain an interesting enumerative result by considering affine sylvester classes as objects in their own right. Theorem~\ref{ThmCoxx3} tells us that if an affine sylvester class contains a uniquely sorted affine permutation, then \emph{all} of the affine permutations in that class are uniquely sorted. Hence, we define an affine sylvester class to be \dfn{uniquely sorted} if its elements are uniquely sorted. 

Every affine sylvester class contains a unique $231$-avoiding affine permutation, so enumerating uniquely sorted affine sylvester classes is the same as enumerating uniquely sorted $231$-avoiding affine permutations. In the symmetric group setting, $231$-avoiding uniquely sorted permutations were enumerated in \cite{DefantCatalan}, where it was shown that they are in bijection with intervals in Tamari lattices. In order to obtain an analogous result in the affine setting, we first need to understand the structure of $231$-avoiding affine permutations.  

Given $v\in\widetilde S_n$, define $\shift(v)$ to be the affine permutation in $\widetilde S_n$ given by $\shift(v)(i)=v(i+1)-1$ for all integers $i$. We are going to make use of the injection $\iota:S_n\to\widetilde S_n$ sending each permutation $u$ to the unique affine permutation in $\widetilde S_n$ that agrees with $u$ on $[n]$ (i.e., the affine permutation with window notation $[u(1),\ldots,u(n)]$). The \dfn{direct sum} of permutations $u\in S_m$ and $v\in S_n$ is the permutation $u\oplus v\in S_{m+n}$ defined by \[(u\oplus v)(i)=\begin{cases} u(i) & \mbox{if } 1\leq i\leq m; \\ v(i-m)+m & \mbox{if } m+1\leq i\leq m+n. \end{cases}\]

Suppose $\kappa\geq 1$ is an integer. It follows from the definition of a valid hook configuration that if $x\in S_{\kappa+1}$ is a permutation that has a valid hook configuration, then $x(\kappa+1)=\kappa+1$. Let $\Upsilon_{\kappa+1}$ be the set of $231$-avoiding uniquely sorted permutations $x\in S_{\kappa+1}$ such that the unique valid hook configuration of $x$ has a hook with southwest endpoint $(1,x(1))$ and northeast endpoint $(\kappa+1,\kappa+1)$. Let $\Upsilon_\kappa^*$ be the set of permutations in $S_\kappa$ that can be obtained from a permutation $x\in\Upsilon_{\kappa+1}$ by deleting the last entry $\kappa+1$. It will be convenient to define $\Upsilon_0^*$ to be the empty set.  

Now suppose $v$ is a $231$-avoiding affine permutation in $\widetilde S_n$. Let $\mathcal H$ be the unique affine valid hook configuration of $v$, and let $\mathscr M=\{\cdots<m_0<m_1<m_2<\cdots\}\in\Sky(v)$ be the set of left-to-right maxima of $v$ corresponding to the skyline of $\mathcal H$. Let $H_i$, $z_i=z_i^{\mathscr M}$, and $r=r(\mathscr M)$ be as described above. Assume the indices are chosen so that $m_1\leq 1<m_2$. Let $y_i$ be the permutation whose plot consists of the points lying underneath $H_i$ together with the southwest endpoint of $H_i$. Let $x_i$ be the permutation whose plot consists of the points lying underneath $H_i$ together with both endpoints of $H_i$. Let $\widehat z_i$, $\widehat y_i$, and $\widehat x_i$ be the standardizations of $z_i$, $y_i$, and $x_i$, respectively. Let $\kappa_i$ be the size of $y_i$ (so $\kappa_i-1$ is the size of $z_i$, and $\kappa_i+1$ is the size of $x_i$). We saw in the proof of Theorem~\ref{ThmCoxx4} that $z_i$ is nonempty and uniquely sorted and that the southwest endpoint of $H_i$ is a descent top. It follows from Proposition~\ref{PropUniquely} that $z_i$ has $\frac{\kappa_i-2}{2}$ descents. Therefore, $x_i$ has $\frac{\kappa_i}{2}$ descents. The hooks in $\mathcal H$ that lie on points in the plot of $x_i$ form a valid hook configuration of $x_i$, so $x_i$ is in the image of $\stack$ by the Fertility Formula (Theorem~\ref{FF}). It follows from Proposition~\ref{PropUniquely} that $x_i$ is uniquely sorted. Consequently, $\widehat x_i\in\Upsilon_{\kappa_i+1}$. Also, $\widehat y_i$ is obtained by deleting the entry $\kappa_i+1$ from $\widehat x_i$, so $\widehat y_i\in\Upsilon_{\kappa_i}^*$. Because $H_i$ is in the skyline of $\mathcal H$, its southwest endpoint is higher up than all points to its left. Since $v$ is $231$-avoiding, every point in the plot of $v$ appearing to the left of the points in the plot of $y_i$ must also be below all of the points in the plot of $y_i$. Similarly, every point in the plot of $v$ appearing to the right of the points in the plot of $y_i$ must also be above all of the points in the plot of $y_i$. We deduce that $v$ can be written as $\shift^{\beta}(\iota(u))$, where $u=\widehat y_1\oplus\cdots\oplus \widehat y_r\in S_n$ and each $\widehat y_i$ is in $\Upsilon_{\kappa_i}^*$. Furthermore, there is a unique choice of $\beta$ such that $0\leq\beta\leq\kappa_1-1$. 

\begin{figure}[ht]
\begin{center}
\includegraphics[height=6.504cm]{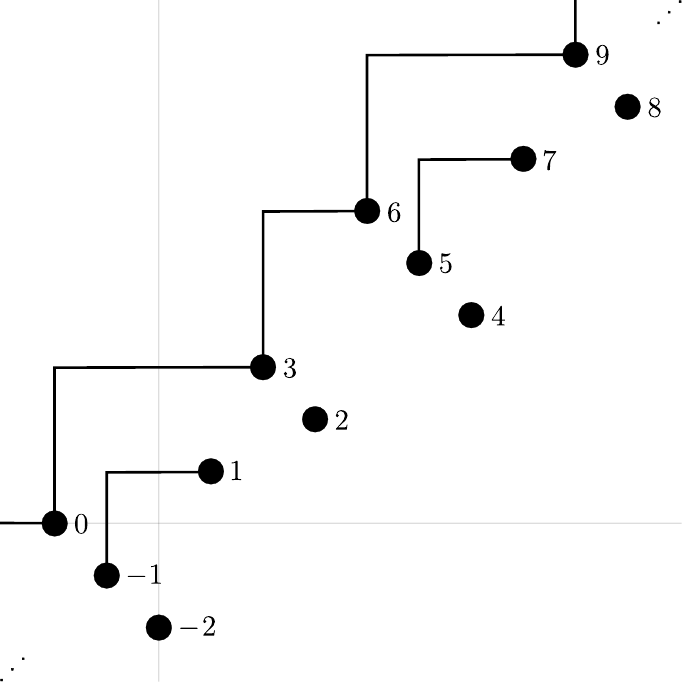}
\caption{The unique affine valid hook configuration of a $231$-avoiding uniquely sorted affine permutation in $\widetilde S_6$.} 
\label{FigCoxx7}
\end{center}  
\end{figure} 

\begin{example}
Consider the $231$-avoiding uniquely sorted affine permutation $v\in\widetilde S_6$ with window notation $[1,3,2,6,5,4]$. The unique affine valid hook configuration $\mathcal H$ of $v$ appears in Figure~\ref{FigCoxx7}. The hook $H_1$ has endpoints $(-2,0)$ and $(2,3)$. The hook $H_2$ has endpoints $(2,3)$ and $(4,6)$. Also, $r=2$. We have $z_1=(-1)(-2)1$, $y_1=0(-1)(-2)1$, and $x_1=0(-1)(-2)13$, so $\widehat z_1=213$, $\widehat y_1=3214$, and $\widehat x_1=32145$. Also, $\kappa_1=4$. Similarly, we have $z_2=2$, $y_2=32$, $x_2=326$, $\widehat z_2=1$, $\widehat y_2=21$, $\widehat x_2=213$, and $\kappa_2=2$. Then $\widehat x_1\in\Upsilon_5$ and $\widehat x_2\in\Upsilon_3$, so $\widehat y_1\in\Upsilon_4^*$ and $\widehat y_2\in\Upsilon_2^*$. The decomposition described above takes the form $v=\shift^3(\iota(u))$, where $u=\widehat y_1\oplus\widehat y_2=3214\oplus 21=321465$. 
\end{example}

The decomposition described above is unique. On the other hand, if we are given positive integers $\kappa_1,\ldots,\kappa_r$ summing to $n$, an integer $\beta$ satisfying $0\leq\beta\leq\kappa_1-1$, and permutations $\widehat y_1,\ldots,\widehat y_r$ with $\widehat y_i\in\Upsilon_{\kappa_i}$ for all $i$, then $\shift^\beta(\iota(\widehat y_1\oplus\cdots\oplus\widehat y_r))$ is a uniquely sorted $231$-avoiding affine permutation in $\widetilde S_n$. Indeed, each $\widehat y_i$ is obtained by deleting the largest entry from a permutation $\widehat x_i\in\Upsilon_{\kappa_i+1}$. The unique affine valid hook configuration of $\shift^\beta(\iota(\widehat y_1\oplus\cdots\oplus\widehat y_r))$ is obtained by gluing together the valid hook configurations of $x_1,\ldots,x_r$, which are essentially the same as the valid hook configurations of $\widehat x_1,\ldots,\widehat x_r$, and repeating modulo $r$. It then follows from the Affine Fertility Formula (Theorem~\ref{AFF}) that the resulting affine permutation $\shift^\beta(\iota(\widehat y_1\oplus\cdots\oplus\widehat y_r))$ is indeed uniquely sorted. Thus, we have the following lemma. 

\begin{lemma}\label{LemCoxx2}
Every $231$-avoiding uniquely sorted affine permutation in $\widetilde S_n$ can be written uniquely as $\shift^\beta(\widehat y_1\oplus\cdots\oplus\widehat y_r)$, where $\widehat y_i\in\Upsilon_{\kappa_i}^*$ for all $i$, $\kappa_1+\cdots+\kappa_r=n$, and $0\leq \beta\leq \kappa_1-1$. 
\end{lemma}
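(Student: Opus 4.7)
The argument in the paragraph preceding the lemma essentially proves both directions; my plan is to formalize that discussion and make the uniqueness claim precise.

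\textbf{Forward direction.} Let $v \in \widetilde S_n$ be $231$-avoiding and uniquely sorted. The proof of Theorem~\ref{ThmCoxx4} establishes that $v$ has a unique affine valid hook configuration $\mathcal H$, that the skyline $\mathscr M = \{\cdots < m_0 < m_1 < m_2 < \cdots\}$ (normalized so $m_1 \leq 1 < m_2$) corresponds to a decomposition into nonempty uniquely sorted permutations $z_i = z_i^{\mathscr M}$, and that the composition ${\bf q}^{\mathcal H}$ has all parts equal to $1$. Defining $H_i, y_i, x_i, \kappa_i$ as in the discussion preceding the lemma, Proposition~\ref{PropUniquely} gives that $z_i$ has $\frac{\kappa_i-2}{2}$ descents, so $x_i$ has $\frac{\kappa_i}{2}$ descents; since the hooks of $\mathcal H$ under $H_i$ form a valid hook configuration of $x_i$, the Fertility Formula shows $x_i$ is in the image of $\stack$, and then Proposition~\ref{PropUniquely} gives that $x_i$ is uniquely sorted. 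Therefore $\widehat x_i \in \Upsilon_{\kappa_i + 1}$ and $\widehat y_i \in \Upsilon_{\kappa_i}^*$. Now I invoke $231$-avoidance: the southwest endpoint of each skyline hook $H_i$ is a left-to-right maximum of $v$, so any point strictly to the left of the $y_i$-block that lay above the block would, together with the northeast endpoint of $H_i$ and a point below $H_i$, form a $231$-pattern. Symmetrically, every point of $v$ to the right of the $y_i$-block lies above it. Reading off one period of $v$ in this way gives $v = \shift^\beta(\iota(\widehat y_1 \oplus \cdots \oplus \widehat y_r))$ for a unique $\beta \in \{0, \ldots, \kappa_1 - 1\}$, the range constraint coming from the normalization on $\mathscr M$. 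Uniqueness of the entire decomposition follows at once: any such decomposition recovers the skyline, hence determines $\mathcal H$, hence $(\kappa_i)$ and $(\widehat y_i)$, while the range constraint pins down $\beta$.

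\textbf{Converse.} Given $(\kappa_i), \beta, (\widehat y_i)$ as in the lemma, extend each $\widehat y_i$ to the unique $\widehat x_i \in \Upsilon_{\kappa_i + 1}$ by appending $\kappa_i + 1$, and set $v = \shift^\beta(\iota(\widehat y_1 \oplus \cdots \oplus \widehat y_r))$. Because each $\widehat x_i$ is $231$-avoiding, so is each $\widehat y_i$, and the direct-sum/shift structure prevents any $231$-pattern from crossing a block boundary, so $v$ is $231$-avoiding. Glue the unique valid hook configurations of the (appropriately positioned) $\widehat x_i$'s together with skyline hooks joining consecutive block tops, and repeat $n$-periodically; the result is an affine valid hook configuration $\mathcal H$ of $v$ all of whose color-class sizes equal $1$. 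In particular $v$ lies in the image of $\widetilde \stack$, and a direct count gives that $v$ has exactly $\sum_i \kappa_i / 2 = n/2$ right descents, so Theorem~\ref{ThmCoxx4} forces $v$ to be uniquely sorted.

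\textbf{Main obstacle.} The only delicate step is verifying that the gluing in the converse really produces a bona fide affine valid hook configuration: one must check that no newly-added skyline hook passes under any point of $v$, and that no two glued hooks overlap improperly at block boundaries. Both facts follow from the observation that within each $\widehat x_i$-block the northeast endpoint is the block maximum, combined with the direct-sum/shift property that later blocks sit strictly above earlier ones; but this deserves to be spelled out carefully.
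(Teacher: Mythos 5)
Your proposal is correct and follows essentially the same route as the paper: the forward direction extracts the block decomposition from the unique affine valid hook configuration exactly as in the discussion preceding the lemma, and the converse glues valid hook configurations of the $\widehat x_i$ together periodically. The only (harmless) deviations are that in the converse you conclude unique sortedness by counting $\frac{n}{2}$ right descents and citing Theorem~\ref{ThmCoxx4}, whereas the paper asserts that the glued configuration is the unique affine valid hook configuration and applies the Affine Fertility Formula directly, and that the $231$-pattern witnessing a point to the left of and above the $y_i$-block should take the \emph{southwest} endpoint of $H_i$ (the left-to-right maximum $m_i$) as its middle entry rather than the northeast endpoint, since the northeast endpoint lies to the right of the points under $H_i$.
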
 

An argument similar to the one used to prove Lemma~\ref{LemCoxx2} yields the next lemma. 

\begin{lemma}\label{LemCoxx3}
Every $231$-avoiding uniquely sorted permutation in $S_n$ can be written uniquely as $\widehat y_1\oplus\cdots\oplus\widehat y_r\oplus 1$, where $\widehat y_i\in\Upsilon_{\kappa_i}^*$ for all $i$ and $\kappa_1+\cdots+\kappa_r+1=n$. 
\end{lemma}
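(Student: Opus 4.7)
The plan is to mimic the proof of Lemma~\ref{LemCoxx2}, adapting the skyline analysis from the doubly infinite affine setting to the finite symmetric one, with an extra ``$\oplus 1$'' at the end to absorb the forced fixed point $v(n)=n$.

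Let $v\in S_n$ be $231$-avoiding and uniquely sorted, and let $\mathcal{H}$ be its unique valid hook configuration (which exists and is unique by Theorem~\ref{FF}). Since $v$ is in the image of $\stack$, we have $v(n)=n$, and Proposition~\ref{PropUniquely} tells us that $n$ is odd with $(n-1)/2$ descents. The base case $n=1$ is trivial with $r=0$, so assume $n\geq 3$. Order the skyline hooks of $\mathcal{H}$ (the hooks not lying underneath any other hook of $\mathcal{H}$) left-to-right as $H_1,\ldots,H_r$ by the position of their southwest endpoints. By Theorem~\ref{FF}, unique sortedness forces every entry of $\mathbf{q}^{\mathcal{H}}$ to equal $1$; in particular, the induced coloring of the plot of $v$ has exactly one blue point.

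The key structural step is to establish three claims: \emph{(i)} the southwest endpoint of $H_1$ is $(1,v(1))$; \emph{(ii)} the northeast endpoint of $H_r$ is $(n,n)$; and \emph{(iii)} for $1\le i\le r-1$, the northeast endpoint of $H_i$ coincides with the southwest endpoint of $H_{i+1}$. Each claim follows by a blue-point count: the southwest endpoint of any skyline hook that is not simultaneously the northeast endpoint of the preceding skyline hook looks to its upper-left and sees only the sky (by left-to-right maximality, nothing in the plot and no non-skyline hook can lie there), contributing an additional blue point; likewise, any point in a positional gap between consecutive skyline hooks, or to the left of $H_1$, or to the right of $H_r$ beneath $(n,n)$, is not covered by any hook and so is blue. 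The $231$-avoidance hypothesis rules out the alternative in which a phantom skyline hook might appear in such a gap. Since $(1,v(1))$ is forced to be the unique blue point, every other candidate blue point must be absent, pinning down the chain structure.

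With (i)--(iii) in hand, define $y_i$ to be the permutation whose plot consists of the points underneath $H_i$ together with the southwest endpoint of $H_i$, and $x_i$ the permutation whose plot additionally includes the northeast endpoint of $H_i$. Let $\kappa_i=|y_i|$, and let $\widehat{y}_i,\widehat{x}_i$ be the standardizations. Exactly as in the proof of Lemma~\ref{LemCoxx2}, the hooks of $\mathcal{H}$ restricted to the plot of $x_i$ form a valid hook configuration of $x_i$, so Theorem~\ref{FF} and Proposition~\ref{PropUniquely} imply that $x_i$ is uniquely sorted with outermost hook from $(1,\widehat{x}_i(1))$ to $(\kappa_i+1,\kappa_i+1)$; hence $\widehat{x}_i\in\Upsilon_{\kappa_i+1}$ and $\widehat{y}_i\in\Upsilon_{\kappa_i}^*$. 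Finally, $231$-avoidance together with the left-to-right-maximum property at each skyline southwest endpoint forces every point to the left of the plot of $y_i$ to lie below it and every point to the right of it to lie above it, yielding the direct sum decomposition $v=\widehat{y}_1\oplus\cdots\oplus\widehat{y}_r\oplus 1$, in which the trailing $\oplus 1$ records the isolated point $(n,n)$. Uniqueness of this decomposition is immediate from the uniqueness of $\mathcal{H}$. The main obstacle will be the blue-point bookkeeping needed for (i)--(iii), but this is routine once the one-blue-point constraint is in place, and the argument parallels the affine case closely enough that no deeper difficulty is expected.
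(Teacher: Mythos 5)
Your proof is correct and matches the paper's intent: the paper establishes this lemma simply by asserting that the argument for Lemma~\ref{LemCoxx2} adapts, and your blue-point count is precisely the right way to supply the one ingredient that does not transfer verbatim from the affine setting, namely that the maximal hooks of the unique valid hook configuration form a connected chain from $(1,v(1))$ to $(n,n)$. One small correction: $231$-avoidance plays no role in establishing that chain structure (the single-blue-point constraint, together with the non-crossing conditions on hooks, already rules out gaps, phantom hooks, and the possibility that a skyline southwest endpoint is the northeast endpoint of a non-skyline hook); it is needed only afterward, to force each block to lie entirely below the blocks to its right so that the decomposition is a genuine direct sum.
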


\begin{figure}[ht]
\begin{center}
\includegraphics[height=3.23cm]{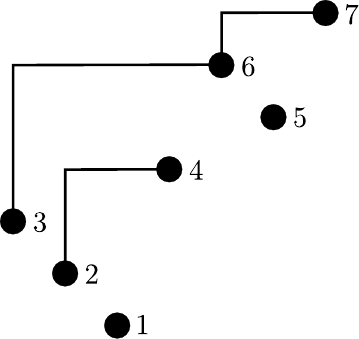}
\caption{The unique valid hook configuration of a $231$-avoiding uniquely sorted permutation in $S_7$.} 
\label{FigCoxx8}
\end{center}  
\end{figure} 

\begin{example}
The $231$-avoiding uniquely sorted permutation $3214657$, whose unique valid hook configuration is presented in Figure~\ref{FigCoxx8}, can be written as $3214\oplus21\oplus 1$. We have $3214\in\Upsilon_4^*$ and $21\in\Upsilon_2^*$. 
\end{example}

\begin{theorem}\label{ThmCoxx5}
For each $k\geq 1$, the number of uniquely sorted affine sylvester classes of $\widetilde S_{2k}$, which is also the number of $231$-avoiding uniquely sorted affine permutations in $\widetilde S_{2k}$, is \[3\binom{4k}{k}-2\sum_{i=0}^k\binom{4k}{i}.\]
\end{theorem}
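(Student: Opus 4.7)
The plan is to encode the decompositions of Lemmas~\ref{LemCoxx2} and~\ref{LemCoxx3} as generating-function identities and then apply Chapoton's enumeration of Tamari intervals together with Lagrange inversion. Let $a_\kappa = |\Upsilon_\kappa^*|$ and set $A(q) = \sum_{\kappa \ge 1} a_\kappa q^\kappa$. Lemma~\ref{LemCoxx3} identifies $B(q) := \sum_{n \ge 1} b_n q^n = q/(1-A(q))$, where $b_n$ counts $231$-avoiding uniquely sorted permutations in $S_n$, while Lemma~\ref{LemCoxx2} gives
\[\widetilde U(q) := \sum_{n \ge 1} U_n q^n = \frac{qA'(q)}{1 - A(q)},\]
where $U_n$ counts $231$-avoiding uniquely sorted affine permutations in $\widetilde S_n$; here the factor $\kappa_1$ from the choice $0 \le \beta \le \kappa_1 - 1$ replaces the first copy of $A(q)$ by $qA'(q)$. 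Eliminating $A$ between these two relations gives $\widetilde U(q) = qB'(q)/B(q) - 1$.

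By Proposition~\ref{PropUniquely}, every $231$-avoiding uniquely sorted permutation has odd size, so $B(q) = qf(q^2)$ with $f(t) = \sum_{k \ge 0} c_k t^k$ and $c_k = b_{2k+1}$. A direct substitution then gives $\widetilde U(q) = 2q^2 f'(q^2)/f(q^2)$, so the theorem reduces to the identity
\[[t^k]\!\left(\frac{2tf'(t)}{f(t)}\right) = 3\binom{4k}{k} - 2\sum_{i=0}^{k}\binom{4k}{i}.\]
By the main result of \cite{DefantCatalan}, $c_k$ equals the number of intervals in the Tamari lattice $\mathcal T_k$ (with $c_0 = 1$ by convention), so Chapoton's formula yields $c_k = \frac{2}{k(k+1)}\binom{4k+1}{k-1}$ for $k \ge 1$. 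The next step is to establish the parametric description $f(t) = (1-X)(1+X)^2$, where $X = X(t)$ is the power series defined by the functional equation $X = t(1+X)^4$; I would verify this by applying Lagrange inversion to $(1-X)(1+X)^2$ and matching coefficients against Chapoton's formula.

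Granting the parametric form, the logarithmic derivative collapses. Implicit differentiation of $X = t(1+X)^4$ gives $X'(t) = (1+X)^5/(1-3X)$, while $df/dX = (1+X)(1-3X)$, so $f'(t) = (1+X)^6$. Using the relation $t(1+X)^4 = X$, we obtain $tf'(t) = X(1+X)^2$ and hence
\[\frac{2tf'(t)}{f(t)} = \frac{2X(1+X)^2}{(1-X)(1+X)^2} = \frac{2X}{1-X} = 2\sum_{r \ge 1} X(t)^r.\]
Applying Lagrange inversion to $X = t(1+X)^4$ gives $[t^k]X^r = (r/k)\binom{4k}{k-r}$ for $1 \le r \le k$, so summing over $r$ yields $U_{2k} = \frac{2}{k}\sum_{r=1}^{k} r\binom{4k}{k-r}$.

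The final step is a binomial manipulation. Substituting $j = k - r$, using $j\binom{4k}{j} = 4k\binom{4k-1}{j-1}$ together with Pascal's identity, and invoking $\binom{4k}{k} = 4\binom{4k-1}{k-1}$ and $\binom{4k-1}{k} = 3\binom{4k-1}{k-1}$, one shows that both $\frac{2}{k}\sum_{r=1}^{k} r\binom{4k}{k-r}$ and $3\binom{4k}{k} - 2\sum_{i=0}^{k}\binom{4k}{i}$ simplify to the common expression $2\binom{4k-1}{k-1} - 4\sum_{i=0}^{k-2}\binom{4k-1}{i}$. The main obstacle will be pinning down the parametric description of $f(t)$; once this is in hand, the remaining calculations are essentially formal manipulations of generating functions and Lagrange inversion.
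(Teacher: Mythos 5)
Your proposal is correct, and its first half coincides exactly with the paper's proof: both start from Lemmas~\ref{LemCoxx2} and~\ref{LemCoxx3}, derive the relation $\widetilde G(q)=qG'(q)/G(q)-1$ between the affine and finite generating functions, and use the enumeration of $231$-avoiding uniquely sorted permutations from \cite{DefantCatalan} together with the algebraic parametrization $g=1+tg^4$ (your $g=1+X$ is the paper's $g$, and your $f(t)=(1-X)(1+X)^2$ is the paper's $E(q)=(2-g)g^2$). Where you diverge is the endgame. The paper verifies, with Mathematica, that $qG'/G$ satisfies a certain quartic algebraic equation, quotes from OEIS entry A107026 that the generating function of $3\binom{4k}{k}-2\sum_{i=0}^k\binom{4k}{i}$ satisfies the same equation, and concludes by uniqueness of the power series solution. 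You instead compute the logarithmic derivative in closed form, obtaining $2tf'(t)/f(t)=2X/(1-X)$, extract coefficients via Lagrange inversion to get $U_{2k}=\frac{2}{k}\sum_{r=1}^k r\binom{4k}{k-r}$, and finish with a binomial identity. I checked your computations: the derivative formulas $X'=(1+X)^5/(1-3X)$ and $f'=(1+X)^6$ are right, the Lagrange inversion step $[t^k]X^r=\frac{r}{k}\binom{4k}{k-r}$ is right, and the final identity reduces (after the substitution $j=k-r$ and Pascal's rule) to $4\sum_{j=0}^{k-1}\binom{4k-1}{j}-4\sum_{i=0}^{k-2}\binom{4k-1}{i}=4\binom{4k-1}{k-1}=\binom{4k}{k}$, which holds. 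The step you flag as the main obstacle --- verifying $f(t)=(1-X)(1+X)^2$ against Chapoton's formula --- is routine: Lagrange inversion with $H(X)=(1-X)(1+X)^2$, $H'(X)=(1+X)(1-3X)$ gives $[t^k]f=\frac{1}{k}\left(\binom{4k+1}{k-1}-3\binom{4k+1}{k-2}\right)=\frac{2(4k+1)!}{(k+1)!(3k+2)!}$, matching the Tamari interval count. Your route buys a fully self-contained, computer-free derivation that produces the closed form directly rather than matching algebraic equations quoted from the OEIS; the paper's route is shorter to write but leans on external verifications.
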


\begin{proof}
Let $\mathcal U_n(231)$ denote the set of $231$-avoiding uniquely sorted permutations in $S_n$. Let $\widetilde{\mathcal U}_n(231)$ denote the set of $231$-avoiding uniquely sorted affine permutations in $\widetilde S_n$. Each uniquely sorted affine sylvester class of $\widetilde S_{2k}$ contains a unique element of $\widetilde{\mathcal U}_{2k}(231)$, so we just need to count the elements of $\widetilde{\mathcal U}_{2k}(231)$. Let \[F(q)=\sum_{\kappa\geq 1}\lvert \Upsilon_{\kappa}^*\rvert q^\kappa,\quad J(q)=\sum_{\kappa\geq 1}\kappa\lvert \Upsilon_\kappa^*\rvert q^\kappa,\quad G(q)=\sum_{n\geq 1}\lvert \mathcal U_n(231)\rvert q^n,\quad \widetilde G(q)=\sum_{n\geq 1}\lvert \widetilde{\mathcal U}_n(231)\rvert q^n.\] Observe that $J(q)=qF'(q)$. 
It follows immediately from Lemma~\ref{LemCoxx3} that 
$\displaystyle G(q)=\frac{q}{1-F(q)}$. This means that $F(q)=1-\dfrac{q}{G(q)}$. Similarly, it follows from Lemma~\ref{LemCoxx2} that $\displaystyle \widetilde G(q)=\frac{J(q)}{1-F(q)}$. Combining these facts, we find that 
\begin{equation}\label{EqCoxx6}
\widetilde G(q)=\frac{qF'(q)}{1-F(q)}=G(q)F'(q)=G(q)\frac{d}{dq}\left(1-\frac{q}{G(q)}\right)=G(q)\frac{qG'(q)-G(q)}{G(q)^2}=q\frac{G'(q)}{G(q)}-1.
\end{equation} We know by Proposition~\ref{PropUniquely} that $\lvert \mathcal U_n(231)\rvert =0$ when $n$ is even. On the other hand, \cite[Corollary~6.1]{DefantCatalan} states that \[\lvert \mathcal U_{2k+1}(231)\rvert =\frac{2}{(3k+1)(3k+2)}\binom{4k+1}{k+1}.\] Let $E(q)=\sum_{k\geq 0}\frac{2}{(3k+1)(3k+2)}\binom{4k+1}{k+1}q^k=G(\sqrt q)/{\sqrt q}$. The coefficients of $E(q)$ appear as OEIS sequence A000260 \cite{OEIS}; it is stated there that $E(q)=(2-g(q))g(q)^2$, where $g(q)$ satisfies the equation $g(q)=1+qg(q)^4$. Thus, $G(q)=q(2-g(q^2))g(q^2)^2$. Using Mathematica, one can solve this equation explicitly for $G(q)$ and then use that explicit expression to verify that \[\left(2q\frac{G'(q)}{G(q)}\right)^4q^2-\left(q\frac{G'(q)}{G(q)}+1\right)^3\left(q\frac{G'(q)}{G(q)}-1\right)=0.\] Therefore, we can use \eqref{EqCoxx6} to see that \[(2(\widetilde G(q)+1))^4q^2-(\widetilde G(q)+2)^3\widetilde G(q)=0.\] Now let \[Q(q)=\sum_{k\geq 1}\left(3\binom{4k}{k}-2\sum_{i=0}^k\binom{4k}{i}\right)q^{2k}.\] According to OEIS entry A107026 \cite{OEIS}, the generating function $Q(q)$ satisfies the equation \[(2(Q(q)+1))^4q^2-(Q(q)+2)^3Q(q)=0.\] In fact, $Q(q)$ is uniquely determined by the fact that it satisfies this equation and satisfies $Q(q)=2q^2+O(q^3)$. Since $\widetilde G(q)$ is another series satisfying this equation and satisfying $\widetilde G(q)=2q^2+O(q^3)$, we must have $Q(q)=\widetilde G(q)$. This completes the proof. 
\end{proof}

\begin{remark}
Although the numbers in Theorem~\ref{ThmCoxx5} already appeared as sequence A107026 in \cite{OEIS}, it seems that there was no known combinatorial interpretation for them until now. 
\end{remark}

As mentioned in Section~\ref{SecStackBack}, much of the early work on West's stack-sorting map developed around the notion of a $2$-stack-sortable permutation and arose from Zeilberger's \cite{Zeilberger} theorem that the number of $2$-stack-sortable permutations in $S_n$ is $\frac{2}{(n+1)(2n+1)}\binom{3n}{n}$. Our final theorem provides an affine analogue of Zeilberger's result. 

In the next lemma, recall that a \emph{left-to-right maximum} of a permutation $z$ is an entry in $z$ that is larger than all entries appearing to its left in $z$.

\begin{lemma}\label{LemCoxx4}
Let $mz$ be a permutation of a set $\{a,a+1,\ldots,b\}$ of consecutive integers obtained by concatenating a number $m$ with a $231$-avoiding permutation $z$. Then $mz$ is $231$-avoiding if and only if $m=a$ or $m-1$ is a left-to-right maximum of $z$.   
\end{lemma}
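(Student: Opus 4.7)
The plan is to reduce the statement to a condition on the relative order of entries in $z$, and then use the left-to-right maximum hypothesis to handle the nontrivial case. The first step is the observation that, because $z$ is already $231$-avoiding, every $231$-pattern of $mz$ must use the entry $m$; and since $m$ sits at the leftmost position, it can only play the role of the ``$2$'' (the first, middle-valued entry) in a $231$-pattern. This reduces the lemma to the statement that $mz$ is $231$-avoiding if and only if there do not exist entries $a'$ and $c$ of $z$ with $a' < m < c$ and with $c$ appearing to the left of $a'$ in $z$.

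From here, the case $m = a$ is immediate, since no entry of $z$ is less than $m$. For the nontrivial case $m > a$, I would prove both implications by focusing on the position of the distinguished entry $m-1$ inside $z$. For the forward direction, assume $m-1$ is a left-to-right maximum of $z$ and suppose for contradiction that some $c > m$ appears before some $a' < m$ in $z$; then I would split on whether $m-1$ lies to the left or to the right of $c$. In the first subcase, $m-1$ itself is followed by $c$ and then $a'$, producing a $231$-pattern $(m-1, c, a')$ inside $z$ and contradicting the hypothesis on $z$. In the second subcase, the entry $c$, which satisfies $c > m-1$, appears to the left of $m-1$, directly contradicting that $m-1$ is a left-to-right maximum.

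For the converse direction, assume $m-1$ is not a left-to-right maximum of $z$, so some entry $e$ appears before $m-1$ in $z$ with $e > m-1$. Since $m$ does not occur in $z$, this forces $e > m$, and then $(m, e, m-1)$ is a $231$-pattern in $mz$ by the reduction in the first paragraph. The main subtlety lies in the forward direction: in the subcase where $m-1$ lies between $c$ and $a'$ in $z$, one must check that $a' \neq m-1$ (using that each value occurs only once in $z$, together with the ordering $m-1$ before $c$ before $a'$) to conclude $a' < m-1$ and thereby produce a genuine $231$-pattern inside $z$. Everything else amounts to unpacking definitions.
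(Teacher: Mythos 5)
Your proposal is correct and follows essentially the same route as the paper: the key step in both is that a $231$-pattern in $mz$ must use $m$ as its first entry, and that replacing $m$ by $m-1$ (justified via the left-to-right-maximum hypothesis) yields a $231$-pattern inside $z$, while the converse uses the witness to non-maximality of $m-1$ to build the pattern $m,e,m-1$. Your version simply makes explicit the case analysis and the check $a'\neq m-1$ that the paper's one-line argument leaves implicit.
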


\begin{proof}
If $m=a$, then certainly $mz$ is $231$-avoiding. Suppose $m-1$ is a left-to-right maximum of $z$. If there were entries $c$ and $d$ such that $m,d,c$ formed a $231$-pattern in $mz$, then $m-1,d,c$ would form a $231$-pattern in $z$. This cannot happen, so $mz$ is $231$-avoiding. 

Conversely, suppose $m\neq a$ and $m-1$ is not a left-to-right maximum of $z$. There is an entry $c>m-1$ that appears to the left of $m-1$ in $z$, so $m,c,m-1$ form a $231$-pattern in $mz$. 
\end{proof}

Consider $w\in\widetilde S_n$, and let $v=\widetilde\stack(w)$. Suppose $w$ is $2$-stack-sortable. Equivalently, $v$ is $1$-stack-sortable. Equivalently (by Proposition~\ref{PropCoxx3}), $v$ is $231$-avoiding. Let $\mathscr M=\{\cdots<m_0<m_1<m_2<\cdots\}$ be the left-to-right maxima of $w$. Then $\mathscr M\in\Sky(v)$. Let us assume the indices are chosen so that $w^{-1}(m_1)\leq 1<w^{-1}(m_2)$. Let $\mathcal I^{-1}(w)=\mathcal T=(T,\sigma)$. Then $v=\mathcal P(\mathcal T)$ by \eqref{EqAffinePostorder}. Let $\mathcal T_R(m_{i+1})$ be the (possibly empty) right subtree of $\sigma^{-1}(m_{i+1})$ in $\mathcal T$, and let $z_i=\mathcal P(\mathcal T_R(m_{i+1}))$. The one-line notation of $v$ is $\cdots m_0z_0m_1z_1m_2\cdots$. Let $u_i=\mathcal I(\mathcal T_R(m_{i+1}))$. Then each permutation $z_i=\stack(u_i)$ is $231$-avoiding, so each permutation $u_i$ is $2$-stack-sortable (recall that Knuth showed that $1$-stack-sortable permutations are the same as $231$-avoiding permutations). All of the entries in $m_iz_i$ are smaller than $m_{i+1}$ because they are labels of vertices lying below $\sigma^{-1}(m_{i+1})$ in $\mathcal T$. Furthermore, every entry $a$ in $m_iz_i$ is smaller than every entry $b$ in $z_{i+1}$ since, otherwise, $a,m_{i+1},b$ would form a $231$-pattern in $v$. It follows from these observations that for every integer $i$, the entries in $m_iz_i$ form a set of consecutive integers. By Lemma~\ref{LemCoxx4}, the entry $m_i$ is either the smallest entry in $m_iz_i$ or is $1$ more than a left-to-right maximum of $z_i$. Let $r$ be the unique integer such that $m_{j+r}=m_j+n$ for all $j\in\mathbb Z$, and let $\kappa_i$ be the size of $m_iz_i$. Finally, let $\beta=-w^{-1}(m_1)+1$. Since $\kappa_1=w^{-1}(m_2)-w^{-1}(m_1)$, the assumption that $w^{-1}(m_1)\leq 1<w^{-1}(m_2)$ guarantees that $0\leq\beta\leq\kappa_1-1$. See Example~\ref{ExamCoxx3}.   

The decomposition of a $2$-stack-sortable affine permutation described in the previous paragraph is reversible. That is to say, $w$ is uniquely determined once we choose the integers $\kappa_1,\ldots,\kappa_r$ that sum to $n$, the standardizations of the permutations $m_iu_i$ for $1\leq i\leq r$, and the integer $\beta$ satisfying $0\leq \beta\leq \kappa_1-1$. To describe the restrictions on $m_iu_i$, let $\widehat m_i\widehat u_i$ be its standardization. Let $\widehat z_i=\stack(\widehat u_i)$. Then $\widehat u_i$ must be $2$-stack-sortable, and either $\widehat m_i=1$ or $\widehat m_i-1$ is a left-to-right maximum of $\widehat z_i$. If we write $\tau_i$ for the standardization of $u_i$, then $\tau_i$ is a $2$-stack-sortable permutation in $S_{\kappa_i-1}$. 

\begin{figure}[ht]
  \begin{center}{\includegraphics[height=9.203cm]{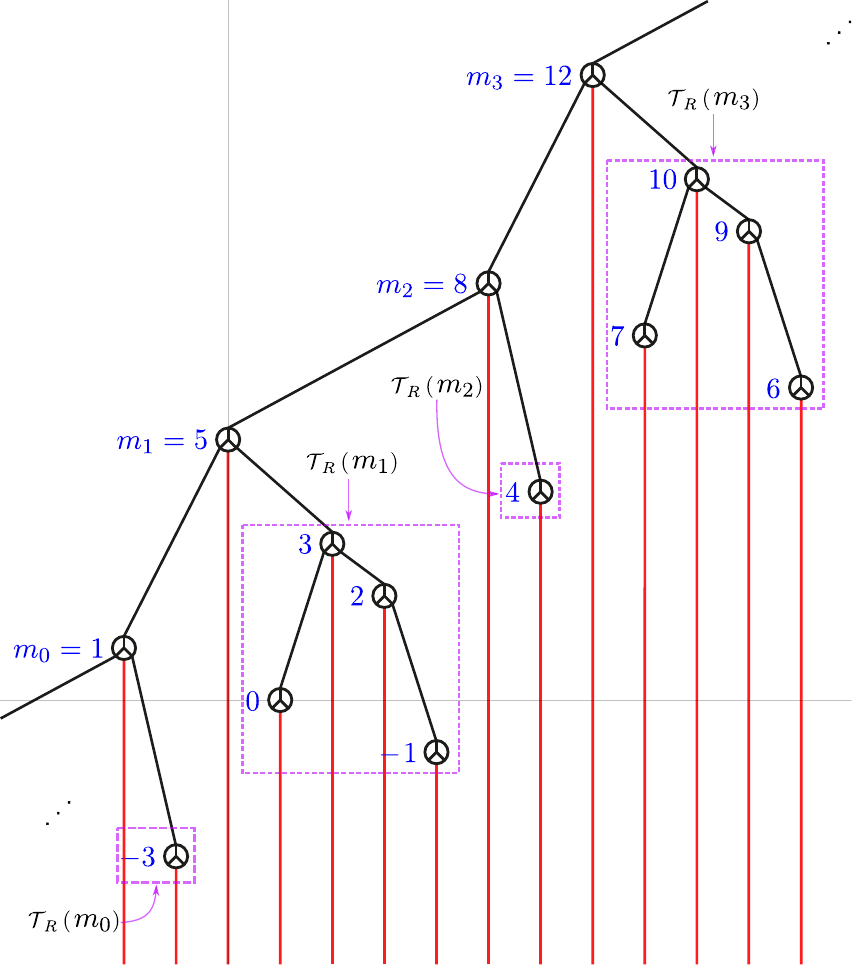}}
  \end{center}
  \caption{The decomposition of $\mathcal I^{-1}(w)$, where $w\in\widetilde S_7$ is the $2$-stack-sortable affine permutation with window notation $[0,3,2,-1,8,4,12]$. }\label{FigCoxx9}
\end{figure}

\begin{example}\label{ExamCoxx3}
Let $n=7$, and let $w\in\widetilde S_7$ be the $2$-stack-sortable affine permutation with window notation $[0,3,2,-1,8,4,12]$. The decreasing affine binary plane tree $\mathcal T=\mathcal I^{-1}(w)$ is depicted in Figure~\ref{FigCoxx9}. In the above notation, we have $m_1=5$, $m_2=8$, and $r=2$. The subtrees $\mathcal T_R(m_{i+1})$ are marked in the figure. We have $u_1=4$ and $u_2=7\,10\,9\,6$. Also, $z_1=\stack(u_1)=\mathcal P(\mathcal T_R(m_2))=4$ and $z_2=\stack(u_2)=\mathcal P(\mathcal T_R(m_3))=7\,6\,9\,10$. Notice that for each $i$, the entries in $m_iz_i$ form a set of consecutive integers. We have $m_1z_1=54$ and $m_2z_2=8\,7\,6\,9\,10$. Also, $\widehat m_1\widehat u_1=21$, $\widehat m_2\widehat u_2=32541$, $\widehat z_1=\stack(\widehat u_1)=\stack(1)=1$ and $\widehat z_2=\stack(\widehat u_2)=\stack(2541)=2145$. Observe that $\widehat m_1-1=2-1=1$ is a left-to-right maximum of $\widehat z_1$, while $\widehat m_2-1=3-1=2$ is a left-to-right maximum of $\widehat z_2$. The permutations $\widehat u_1=1$ and $\widehat u_2=2541$ are $2$-stack-sortable, so their standardizations $\tau_1=1$ and $\tau_2=2431$ are also $2$-stack-sortable. Finally, $\beta=-w^{-1}(5)+1=1$. 

Suppose we were given that $n=7$ and $r=2$ and that we were handed the standardizations $\widehat m_1\widehat u_1=21$ and $\widehat m_2\widehat u_2=32541$ along with the integer $\beta=1$. The standardizations tell us the relative order of $m_iu_i$ for all $i\in\mathbb Z$, and we can begin to reconstruct the set of entries of $m_iu_i$ by using the fact that each such set of entries is a set of consecutive integers (and $\cdots<m_0<m_1<m_2<\cdots$). In fact, since $w=\cdots m_1u_0m_2u_1m_3u_2\cdots$ and $w^{-1}(m_1)=1-\beta=0$, this determines the entire one-line notation of $w$ up to a global shift of the entries. However, there is a unique choice of this global shift that makes the identity $\sum_{i=1}^nw(i)=\binom{n+1}{2}$ hold. Hence, we can reconstruct $w$.   
\end{example}

In what follows, we let ${\mathcal W}_2(n)$ and $\widetilde{\mathcal W}_2(n)$ denote, respectively, the set of $2$-stack-sortable permutations in $S_n$ and the set of $2$-stack-sortable affine permutations in $\widetilde S_n$. For each permutation $u$, let $\slmax(u)$ denote the number of left-to-right maxima of $\stack(u)$. In the above notation, $\slmax(\tau_i)+1=\slmax(\widehat u_i)+1$ is the number of ways to choose the value of $\widehat m_i$ relative to the values of the entries of $\widehat u_i$ so that either $\widehat m_i=1$ or $\widehat m_i-1$ is a left-to-right maximum of $\widehat z_i$. The preceding discussion yields the following lemma. 

\begin{lemma}\label{LemCoxx5}
For $n\geq 1$, we have \[\lvert \widetilde{\mathcal W}_2(n)\rvert =\sum_{\substack{\kappa_1,\ldots,\kappa_r\geq 1 \\ \kappa_1+\cdots+\kappa_r=n}}\kappa_1\prod_{i=1}^r\left(\sum_{\tau_i\in\mathcal W_2(\kappa_i-1)}(\slmax(\tau_i)+1)\right).\]
\end{lemma}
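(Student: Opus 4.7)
The plan is to formalize the decomposition of $2$-stack-sortable affine permutations that was described in detail in the paragraphs preceding the lemma, and then count the resulting tuples. The hard work has largely been done; the task is to package it as a bijection and enumerate.

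First, I would define the forward map. Given $w\in\widetilde{\mathcal W}_2(n)$, set $v=\widetilde\stack(w)$, which is $231$-avoiding by Proposition~\ref{PropCoxx3}. Let $\cdots<m_0<m_1<m_2<\cdots$ be the left-to-right maxima of $w$, indexed so that $w^{-1}(m_1)\leq 1<w^{-1}(m_2)$, and let $r$ be the integer with $m_{j+r}=m_j+n$. From $\mathcal T=\mathcal I^{-1}(w)$ extract, for each $i$, the right subtree $\mathcal T_R(m_{i+1})$ of $\sigma^{-1}(m_{i+1})$, giving the affine sylvester data $u_i=\mathcal I(\mathcal T_R(m_{i+1}))$ and $z_i=\mathcal P(\mathcal T_R(m_{i+1}))=\stack(u_i)$. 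Let $\kappa_i$ be the size of $m_iz_i$, set $\beta=1-w^{-1}(m_1)$, and let $\widehat m_i\widehat u_i$ denote the standardization of $m_iu_i$. The forward map sends $w$ to the tuple
\[\bigl((\kappa_1,\dots,\kappa_r),(\widehat m_1\widehat u_1,\dots,\widehat m_r\widehat u_r),\beta\bigr).\]

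Next I would isolate the conditions that this tuple satisfies and verify bijectivity. From the discussion before the lemma, the entries of each $m_iz_i$ form a block of consecutive integers, so the standardization $\widehat m_i\widehat u_i$ already records $m_iu_i$ up to a rigid shift. Each $\widehat u_i=\tau_i$ lies in $\mathcal W_2(\kappa_i-1)$ (because $z_i$ is $231$-avoiding), and Lemma~\ref{LemCoxx4} applied to $\widehat m_i\widehat z_i$ (recall $\widehat z_i=\stack(\tau_i)$) shows that either $\widehat m_i=1$ or $\widehat m_i-1$ is a left-to-right maximum of $\stack(\tau_i)$. The construction sketched in Example~\ref{ExamCoxx3} provides the inverse: the composition $(\kappa_1,\dots,\kappa_r)$ together with the standardizations fully determines the one-line notation of $w$ up to a global shift of the entries, and this shift is pinned down uniquely by the identity $\sum_{i=1}^nw(i)=\binom{n+1}{2}$; the constraint $0\leq\beta\leq\kappa_1-1$ records the alignment with the window $[1,n]$. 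The main obstacle is checking that the reconstructed $w$ really does lie in $\widetilde{\mathcal W}_2(n)$, which amounts to verifying that the concatenated one-line notation $\cdots m_1z_1m_2z_2m_3\cdots$ is $231$-avoiding; this follows by splitting any putative $231$-pattern into cases depending on which of its three entries belong to which block $m_iz_i$, using the $231$-avoidance of each $z_i$ and the facts that the blocks are intervals of consecutive integers and that Lemma~\ref{LemCoxx4} rules out patterns crossing a single $m_{i+1}$.

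Finally I would carry out the enumeration. For a fixed composition $(\kappa_1,\dots,\kappa_r)$ of $n$, the parameter $\beta$ contributes exactly $\kappa_1$ choices; for each $i$, choosing $\widehat m_i\widehat u_i$ amounts to selecting $\tau_i\in\mathcal W_2(\kappa_i-1)$ and then choosing $\widehat m_i$ to be either $1$ or the successor of one of the $\slmax(\tau_i)$ left-to-right maxima of $\stack(\tau_i)$, giving $\slmax(\tau_i)+1$ possibilities. Multiplying these contributions and summing over compositions of $n$ yields
\[\lvert\widetilde{\mathcal W}_2(n)\rvert=\sum_{\substack{\kappa_1,\dots,\kappa_r\geq 1\\ \kappa_1+\cdots+\kappa_r=n}}\kappa_1\prod_{i=1}^r\left(\sum_{\tau_i\in\mathcal W_2(\kappa_i-1)}(\slmax(\tau_i)+1)\right),\]
as claimed.
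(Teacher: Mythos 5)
Your proposal is correct and follows essentially the same route as the paper: the paper proves this lemma by exactly the decomposition $w\mapsto\bigl((\kappa_1,\dots,\kappa_r),(\widehat m_1\widehat u_1,\dots,\widehat m_r\widehat u_r),\beta\bigr)$ developed in the two paragraphs preceding the statement, with $\kappa_1$ counting the choices of $\beta$ and $\slmax(\tau_i)+1$ counting the admissible placements of $\widehat m_i$ via Lemma~\ref{LemCoxx4}. Your additional check that the reconstructed $v$ is $231$-avoiding (every putative $231$-pattern is forced into a single block $m_iz_i$ because the blocks are increasing intervals of consecutive integers) is a correct filling-in of the reversibility claim that the paper leaves implicit.
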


We can now enumerate $2$-stack-sortable affine permutations. 

\begin{theorem}
Let \[I(q)=\sum_{n\geq 0}\frac{2}{(n+1)(2n+1)}\binom{3n}{n}q^n\quad\text{and}\quad\widetilde I(q)=\sum_{n\geq 1}\lvert \widetilde {\mathcal W}_2(n)\rvert q^n.\] We have \[\widetilde I(q)=\frac{qI'(q)}{I(q)(I(q)-1)}-1.\]  
\end{theorem}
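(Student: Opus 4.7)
The plan is to invoke Lemma~\ref{LemCoxx5} to translate the enumeration of $2$-stack-sortable affine permutations into a generating function identity, and then reduce the theorem to a single nontrivial identity relating the auxiliary series that arises to $I(q)$.

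First, I would introduce the auxiliary series $B(q) = \sum_{\kappa \geq 1} b_\kappa q^\kappa$ with $b_\kappa = \sum_{\tau \in \mathcal{W}_2(\kappa-1)}(\slmax(\tau) + 1)$. Lemma~\ref{LemCoxx5} then gives directly
\[
\widetilde I(q) = \sum_{r \geq 1}\left(\sum_{\kappa \geq 1} \kappa\, b_{\kappa}\, q^{\kappa}\right) B(q)^{r - 1} = \frac{q B'(q)}{1 - B(q)},
\]
where the first factor absorbs the weight $\kappa_1$ on the distinguished first block of each composition and the remaining blocks sum geometrically.

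The crux is the identity
\[
(I(q) - 1)(1 - B(q)) = q I(q),
\]
equivalently $1 - B(q) = qI(q)/(I(q)-1)$. Given this, a short computation finishes the argument: differentiating yields $B'(q) = (qI'(q) - I(q)(I(q)-1))/(I(q)-1)^2$, and
\[
\widetilde I(q) = \frac{qB'(q)}{1-B(q)} = \frac{q(qI'(q) - I(q)(I(q)-1))/(I(q)-1)^2}{qI(q)/(I(q)-1)} = \frac{qI'(q)}{I(q)(I(q)-1)} - 1.
\]

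To prove the key identity, I would extract $[q^n]$ on both sides, reducing it to the recurrence $t_n = t_{n-1} + \sum_{k=1}^{n-1} t_k b_{n-k}$, where $t_m := |\mathcal{W}_2(m)|$. My first attempt would be a combinatorial decomposition of $\mathcal{W}_2(S_n)$ according to the position of the entry $n$: permutations with $w(n) = n$ contribute $t_{n-1}$ bijectively, while those of the form $w = L n R$ with $|R| \geq 1$ should decompose into a $2$-stack-sortable prefix $L$ (giving the factor $t_k$ with $k = |L|$) paired with a building block $(\widehat m, \widehat u)$ of size $n - k$ in the sense of Lemma~\ref{LemCoxx5}. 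The compatibility of $L$ and $R$ needed for $2$-stack-sortability of $LnR$ should be recoverable from Lemma~\ref{LemCoxx4}, which governs exactly when prepending a specified entry to a $231$-avoiding permutation preserves $231$-avoidance. An alternative route is algebraic: use the parametrization $q = u/(1+u)^3$, $I = 1 + u - u^2$ coming from $I = 3f - f^2 - 1$ with $f(q) = 1 + qf(q)^3$, under which $I'(q) = (1+u)^4$, $qI'(q) = u(1+u)$, and $I(q)(I(q)-1) = u(1-u)(1+u-u^2)$, so both sides of the theorem reduce to explicit rational functions of $u$.

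The main obstacle is the key identity $(I-1)(1-B) = qI$. On the combinatorial side, the delicate point is the precise matching of the building blocks from Lemma~\ref{LemCoxx5} to the suffix $nR$ in $w = LnR$: one must show that the condition ``$\widehat m\cdot\stack(\widehat u)$ is $231$-avoiding'' captures exactly the no-cross-$231$ condition between $\stack(L)$ and $\stack(R)$ needed for the $2$-stack-sortability of $LnR$. On the algebraic side, the difficulty is expressing $B(q)$ itself as a rational function of $u$, which amounts to computing $\sum_{\tau \in \mathcal{W}_2(n)}(\slmax(\tau) + 1)$ in closed form, itself a refinement of Zeilberger's classical formula.
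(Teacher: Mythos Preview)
Your reduction to the identity $1-B(q)=qI(q)/(I(q)-1)$ is exactly the paper's: it sets $F(q)=\sum_{\kappa\ge1}\sum_{\tau\in\mathcal W_2(\kappa-1)}(\slmax(\tau)+1)\,q^\kappa$ (your $B$), obtains $\widetilde I(q)=qF'(q)/(1-F(q))$ from Lemma~\ref{LemCoxx5}, and the closing calculus is the same as yours.

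Where you diverge is at the key identity itself, and here the paper takes a route you did not anticipate. It writes $F(q)=q+q\bigl[\partial_x(xG(q,x))\bigr]_{x=1}$ for the bivariate series $G(q,x)=\sum_{\kappa\ge1}\sum_{\tau\in\mathcal W_2(\kappa)}x^{\slmax(\tau)}q^\kappa$, then quotes Fang's functional equation for $G(q,x)$ coming from the fighting-fish bijection, differentiates in $x$, and sets $x=1$. This yields $F(q)=1+qI(q)/(1-I(q))$ with no further combinatorics. The external input is precisely the $\slmax$-refined enumeration of $\mathcal W_2$ that you flagged as the algebraic obstacle; the paper simply cites it rather than rederiving it.

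Your proposed combinatorial route, as sketched, does not work. In $w=LnR$ the entry $n$ is always the maximum of the block $nR$, whereas in the building blocks $(\widehat m,\widehat u)$ of Lemma~\ref{LemCoxx5} the first entry $\widehat m$ ranges over all $\slmax(\widehat u)+1$ admissible values; that range is exactly what produces the weight in $b_\kappa$, and the decomposition by position of $n$ never sees it. Moreover, the value set of $L$ is an arbitrary $k$-subset of $[n-1]$, not forced to be $[k]$ (e.g.\ $w=2413$ is $2$-stack-sortable with $L=2$ and $R=13$), so ``standardized $L\in\mathcal W_2(k)$'' under-specifies the data by ignoring how the values of $L$ and $R$ interleave. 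Already at $n=3$ the naive count $\sum_{k=0}^{n-2}t_kb_{n-k}$ overcounts the permutations with $w(n)\neq n$. So the position-of-$n$ decomposition is not the bijection behind the recurrence $t_n=t_{n-1}+\sum_{k=1}^{n-1}t_kb_{n-k}$, and without a replacement you are left needing the Fang equation (or an equivalent refined enumeration) to close the argument.
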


\begin{proof}
It follows from Lemma~\ref{LemCoxx5} that \[\widetilde I(q)=\left(\sum_{\kappa_1\geq 1}\kappa_1\sum_{\tau_1\in\mathcal W_2(\kappa_1-1)}(\slmax(\tau_1)+1)q^{\kappa_1}\right)\sum_{j\geq 0}\left(\sum_{\kappa\geq 1}\sum_{\tau\in\mathcal W_2(\kappa-1)}(\slmax(\tau)+1)q^\kappa\right)^j\] 
\begin{equation}\label{EqCoxx9}
=\frac{qF'(q)}{1-F(q)},
\end{equation} where $F(q)=\sum_{\kappa\geq 1}\sum_{\tau\in\mathcal W_2(\kappa-1)}(\slmax(\tau)+1)q^\kappa$. If we let $G(q,x)=\sum_{\kappa\geq 1}\sum_{\tau\in\mathcal W_2(\kappa)}x^{\slmax(\tau)}q^\kappa$, then 
\begin{equation}\label{EqCoxx8}
F(q)=q+q\left[\frac{\partial}{\partial x}\left(xG(q,x)\right)\right]_{x=1}.
\end{equation} As we mentioned before, $2$-stack-sortable permutations have been studied extensively, and luckily for us, Fang \cite{Fang} has already investigated the generating function $G(q,x)$. In fact, \cite[Equation~(5)]{Fang} provides an equation for a refinement\footnote{In Fang's notation, $G(q,x)$ is $T(q,x,1,1;1,1,1)$.} of $G(q,x)$ that keeps track of several additional statistics. By setting most of the variables in that equation equal to $1$, we arrive at the equation 
\begin{equation}\label{EqCoxx7}
(x-1)G(q,x)=qx(x-1)(1+G(q,x))^2+qxG(q,x)G(q,x)-qxG(q,x)G(q,1). 
\end{equation}
Let us write $G_x(q,x)$ for $\frac{\partial}{\partial x}G(q,x)$. Differentiating each side of \eqref{EqCoxx7} with respect to $x$ yields \[(x-1)G_x(q,x)+G(q,x)=2qx(x-1)(1+G(q,x))G_x(q,x)+q(2x-1)(1+G(q,x))^2\] \[+2qxG(q,x)G_x(q,x)+qG(q,x)^2-qG(q,1)(xG_x(q,x)+G(q,x)).\] Now put $Q(q)=\left[G_x(q,x)\right]_{x=1}$, and set $x=1$ in the previous equation to obtain \[G(q,1)=q(1+G(q,1))^2+2qG(q,1)Q(q)+qG(q,1)^2-qG(q,1)(Q(q)+G(q,1))\] \[=q(1+G(q,1))^2+qG(q,1)Q(q).\] Hence, \[Q(q)=\frac{G(q,1)-q(1+G(q,1))^2}{qG(q,1)}.\] By \eqref{EqCoxx8}, we have \[F(q)=q(1+G(q,1)+Q(q))=q\left(1+G(q,1)+\frac{G(q,1)-q(1+G(q,1))^2}{qG(q,1)}\right)\] \[=1+q\left(1+G(q,1)-\frac{(1+G(q,1))^2}{G(q,1)}\right).\] Zeilberger's enumeration of $2$-stack-sortable permutations \cite{Zeilberger} tells us that \[G(q,1)=\sum_{n\geq 1}\lvert \mathcal W_2(n)\rvert q^n=\sum_{n\geq 1}\frac{2}{(n+1)(2n+1)}\binom{3n}{n}q^n=I(q)-1,\] so \[F(q)=1+q\left(I(q)-\frac{I(q)^2}{I(q)-1}\right)=1+q\frac{I(q)}{1-I(q)}.\] Finally, by \eqref{EqCoxx9}, we have \[\widetilde I(q)=\frac{qF'(q)}{1-F(q)}=\frac{q\frac{qI'(q)+I(q)(1-I(q))}{(1-I(q))^2}}{-q\frac{I(q)}{1-I(q)}}=\frac{qI'(q)}{I(q)(I(q)-1)}-1. \qedhere\]
\end{proof}

\section{Future Directions}\label{SecConclusion}

We have initiated the combinatorial and dynamical analysis of Coxeter stack-sorting operators, but it seems we have only grazed the surface. Here, we collect suggestions for future work. 

\subsection{Other Special Lattice Congruences}
There are many special lattice congruences on symmetric groups whose associated Coxeter stack-sorting operators have not been explored. For example, it could be fruitful to investigate more thoroughly the Coxeter stack-sorting operators on $S_n$ arising from Reading's Cambrian congruences \cite{ReadingCambrian}. Recall that these are the same as the $\delta$-permutree congruences $\equiv_\delta$ for $\delta\in\{\upCirc,\downCirc\}^n$. One particularly interesting Cambrian congruence from \cite{ReadingCambrian} is the \dfn{bipartite Cambrian congruence}, which is obtained by setting $\delta=\delta_1\cdots\delta_n$, where $\delta_i=\upCirc$ when $i$ is odd and $\delta_i=\downCirc$ when $i$ is even.

Some other lattice congruences on $S_n$ whose Coxeter stack-sorting operators could be worth investigating are the $k$-twist congruences from \cite{Pilaud3} and the Baxter congruence from \cite{Law, Giraudo}. 

\subsection{Upward Projection Maps}
Suppose $\equiv$ is a lattice congruence on the left weak order of a finite Coxeter group $W$. Every congruence class of $\equiv$ has a unique maximal element, and we denote by $\pi_\equiv^\uparrow$ the upward projection map that sends each $w\in W$ to the maximal element of the congruence class containing $w$. In analogy with Definition~\ref{DefCoxStackOp}, we consider the map ${\bf R}_\equiv:W\to W$ defined by ${\bf R}_\equiv(w)=\pi_\equiv^\uparrow(w)w^{-1}$. The long-term dynamical behavior of ${\bf R}_\equiv$ can be more complicated than that of ${\bf S}_\equiv$ because there can be multiple periodic points. It would be interesting to gain an understanding of these new operators, even for specific lattice congruences on symmetric groups. 

As an example, consider the sylvester congruence $\equiv_{\syl}$ on $S_n$. For $w\in S_n$, let $\rev(w)=ww_0$ be the reverse of $w$. Dukes \cite{Dukes} introduced the map $\revstack=\stack\circ\rev$. One can show that ${\bf R}_{\equiv_{\syl}}(w)=\revstack(w)^{-1}$ for all $w\in S_n$. In addition, one can show that the identity permutation $e$ is the only periodic point of ${\bf R}_{\equiv_{\syl}}$ in $S_n$. Experimental data (checked for $n\leq 9$) suggests that the maximum size of a forward orbit of ${\bf R}_{\equiv_{\syl}}:S_n\to S_n$ could be $\left\lceil\frac{n}{2}\right\rceil+2$ for all $n\geq 4$. By contrast, the maximum size of the forward orbit of an element of $S_n$ under $\revstack$ is $n$ (see \cite{Dukes}). 

Similarly, one can show that ${\bf R}_{\equiv_{\des}}(w)={\bf S}_{\equiv_{\des}}(\rev(w))^{-1}$ for all $w\in S_n$ (recall that ${\bf S}_{\equiv_{\des}}$ is the pop-stack-sorting map). The dynamics of ${\bf R}_{\equiv_{\des}}$ seem interesting. For instance, when $n=4$, ${\bf R}_{\equiv_{\des}}$ has one fixed point, one periodic orbit of period $4$, one periodic orbit of period $6$, and $13$ non-periodic points. 

Perhaps one could prove general results, along the same lines as in \cite{DefantCoxeterPop}, for the map ${\bf R}_{\equiv_{\des}}:W\to W$ when $W$ is an arbitrary finite irreducible Coxeter group. 

\subsection{Descents After Coxeter Stack-Sorting}
In Section~\ref{SecDescents}, we proved Theorems~\ref{ThmCoxx6} and~\ref{ThmCoxx1}, which tell us about the maximum number of right descents a permutation in the image of a Coxeter stack-sorting operator on $S_n$ can have. It would be interesting to have analogues of these theorems for Coxeter groups of other types.

\subsection{Stack-Sorting in Type $B$}

We believe there is more hidden structure in the type-$B$ stack-sorting map $\stack_B$. For example, we have the following conjecture. 

\begin{conjecture}\label{ConjCoxx1}
If $n\geq 1$ is odd and $v\in B_n$, then $\lvert \stack_B^{-1}(v)\rvert $ is even. 
\end{conjecture}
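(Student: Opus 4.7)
The plan is to prove Conjecture~\ref{ConjCoxx1} by exhibiting a fixed-point-free involution on $\stack_B^{-1}(v)$. By Proposition~\ref{PropCoxx1} and \eqref{EqCoxx2}, elements of $\stack_B^{-1}(v)$ correspond bijectively to centrally symmetric decreasing $\upCirc^n\downCirc^n$-permutrees $\mathcal T=(T,\sigma)$ satisfying $\mathcal P(\mathcal T)=v$, so it suffices to construct such an involution on this set of trees.

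A useful preliminary step is the observation that, for any centrally symmetric $\delta$-permutree $T$, the postorder $\sigma_{\post}^T$ is itself centrally symmetric: $\sigma_{\post}^T(\rho(u))=2n+1-\sigma_{\post}^T(u)$ for the central involution $\rho$ on $V(T)$. Indeed, the map $u\mapsto 2n+1-\sigma_{\post}^T(\rho(u))$ satisfies the defining inequalities of the postorder (since $\rho$ reverses both $\preceq$ and the horizontal ordering), so by uniqueness it equals $\sigma_{\post}^T$. Combined with $v\in B_n$, this shows that $\sigma=v\circ\sigma_{\post}^T$ is automatically centrally symmetric whenever it is a linear extension of $(V(T),\preceq)$. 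Hence $|\stack_B^{-1}(v)|$ equals the number of centrally symmetric $\delta$-permutrees $T$ such that $v\circ\sigma_{\post}^T$ is a linear extension.

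The natural focal point for the involution is the pair $u_n:=\sigma^{-1}(n)$ and $u_{n+1}:=\sigma^{-1}(n+1)$, which satisfy $\rho(u_n)=u_{n+1}$ because $n+(n+1)=2n+1$. In the smallest case $n=1$, the two preimages of $e\in B_1$ correspond to trees whose single central edge has its child at position $1$ or at position $2$, and the involution simply exchanges these two trees. For general $n$, I envision $\Phi$ as swapping $u_n$ and $u_{n+1}$ (so $u_n$ moves from its current position $p(u_n)$ to $2n+1-p(u_n)$) together with a local surgery of the surrounding edges so that the resulting tree is still a centrally symmetric $\upCirc^n\downCirc^n$-permutree with $v\circ\sigma_{\post}$ a linear extension. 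Because $2n+1$ is odd, the positions $p(u_n)$ and $2n+1-p(u_n)$ always differ, so $\Phi$ is automatically fixed-point-free once it is well-defined.

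The main obstacle is to make the local surgery precise and to verify its well-definedness. When $u_n$ and $u_{n+1}$ are adjacent in $T$, the move swaps their child/parent roles and reassigns the side of the upward or downward wall on which subtrees lie; when they are incomparable, it must promote or demote other vertices to accommodate the swap. Every modification must respect the wall-crossing constraints of Definition~\ref{DefPermutrees} and preserve the inversion pattern of $v$ relative to the new partial order. The oddness of $n$ is expected to be essential here: moving $u_n$ across the central axis converts a vertex constrained by the $\upCirc$ rules at its old position into one constrained by the $\downCirc$ rules at its new position (or vice versa), and reconciling these opposing constraints while preserving the postorder reading appears to succeed only when $n$ is odd. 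Identifying the correct compatibility condition and verifying that $\Phi$ respects it for all valid $T$ will be the crux of the proof.
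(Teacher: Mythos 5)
This statement is an open conjecture in the paper (it appears in Section~\ref{SecConclusion} among the suggestions for future work), so there is no proof of it in the paper to compare against. Your submission is also not a proof: it is a plan whose decisive step is explicitly deferred. The preliminary reductions are sound --- the identification of $\stack_B^{-1}(v)$ with centrally symmetric decreasing $\upCirc^n\downCirc^n$-permutrees whose postorder reading is $v$, the central symmetry of the postorder labeling (which follows correctly from the uniqueness characterization of $\sigma_{\post}$ once one notes that the central involution $\rho$ reverses both $\preceq$ and the horizontal order), and the observation that the vertices labeled $n$ and $n+1$ are exchanged by $\rho$. But the involution $\Phi$ itself is never defined: you describe it only as ``swapping $u_n$ and $u_{n+1}$ together with a local surgery,'' and you state yourself that making the surgery precise and verifying well-definedness ``will be the crux of the proof.'' A proof cannot end where its crux begins.

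A more pointed concern: the fixed-point-freeness argument you give relies only on $2n+1$ being odd, which holds for every $n$. Yet the conjecture is genuinely false for even $n$ --- the paper itself records that $25136847\in B_4$ has exactly one preimage under $\stack_B$ --- so any correct argument must use the parity of $n$ in an essential, load-bearing way. In your outline the parity enters only as a hope that the wall-crossing constraints can be reconciled ``only when $n$ is odd,'' with no mechanism identified. Until you can exhibit the surgery explicitly, verify that it produces a valid centrally symmetric permutree with the same postorder reading, and isolate exactly where odd $n$ is used (and why the construction breaks for even $n$), this remains a plausible strategy rather than a proof.
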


In particular, Conjecture~\ref{ConjCoxx1} implies the following weaker conjecture. 

\begin{conjecture}\label{ConjCoxx2}
If $n\geq 1$ is odd and $v\in B_n$, then $\lvert \stack_B^{-1}(v)\rvert \neq 1$. 
\end{conjecture}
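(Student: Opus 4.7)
My plan is to prove Conjecture~\ref{ConjCoxx2} by establishing a type-$B$ analogue of Proposition~\ref{PropUniquely} and Theorem~\ref{ThmCoxx4}. The conjectured characterization is that $v\in B_n$ is uniquely sorted under $\stack_B$ if and only if $v$ lies in the image of $\stack_B$ and satisfies $\lvert D_R(v)\rvert =n/2$; since the right-hand side requires $n$ to be an even integer, this immediately yields the conjecture. By \eqref{EqCoxx2}, preimages of $v\in B_n$ under $\stack_B$ are in bijection with decreasing centrally symmetric $\delta$-permutrees $(T,\sigma)$ with $\mathcal P(T,\sigma)=v$, where $\delta=\upCirc^n\downCirc^n$. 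A preliminary step is to prove the ``postorder $\alpha$-equivariance'' lemma: for any centrally symmetric $\delta$-permutree $T$, the postorder satisfies $\sigma_{\post}^T(\rho(u))=2n+1-\sigma_{\post}^T(u)$, where $\rho$ is the $180^\circ$ rotation; this follows from the uniqueness characterization of postorder, since $u\mapsto 2n+1-\sigma_{\post}^T(\rho(u))$ is readily seen to be a linear extension with the same defining property as $\sigma_{\post}^T$. As a consequence, for $v\in B_n$ the induced labeling $v\circ\sigma_{\post}^T$ automatically gives an element of $B_n$ whenever it is a linear extension.

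The core step is to develop a type-$B$ analogue of the Decomposition Lemma and the Fertility Formula (Theorems~\ref{ADL} and~\ref{AFF}) using $\alpha$-symmetric valid hook configurations of $v$ viewed in $S_{2n}$: collections of hooks on the plot of $v$ invariant under the $180^\circ$ rotation centered at $(n+\tfrac12,n+\tfrac12)$ and satisfying Definition~\ref{Def5}. Following the strategies of \cite{DefantPostorder,DefantTroupes} and the affine arguments in Section~\ref{SecAffine}, I would establish a formula
\[
\lvert \stack_B^{-1}(v)\rvert =\sum_{\mathcal H\in\VHC_B(v)}C_{{\bf q}_B^{\mathcal H}},
\]
where ${\bf q}_B^{\mathcal H}$ is a ``folded'' composition indexing the colored points in one half of the plot, with the self-symmetric sky region and any self-symmetric central hook contributing in a controlled way. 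When $\lvert \stack_B^{-1}(v)\rvert =1$, this forces all parts of ${\bf q}_B^{\mathcal H}$ to be $0$ or $1$. A careful count of paired versus self-symmetric regions, combined with the bound $\lvert \Des^A(v)\rvert \leq n-1$ from the proof of Theorem~\ref{ThmCoxx2}, shows that such a configuration can exist only when $\lvert \Des^A(v)\rvert =n-1$ is odd, i.e.\ when $n$ is even; the parity imbalance in the folded composition when $n$ is odd (arising because colored points in paired regions necessarily come in pairs) obstructs the unique-sortedness condition.

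The main obstacle will be rigorously setting up the type-$B$ Fertility Formula, in particular the combinatorial treatment of hooks whose southwest endpoint lies in the $\upCirc$-half $\{1,\ldots,n\}$ while their northeast endpoint lies in the $\downCirc$-half $\{n+1,\ldots,2n\}$, and the precise accounting of colored points in self-symmetric regions. A complementary, more elementary route for Conjecture~\ref{ConjCoxx2} alone is to construct a fixed-point-free involution on $\stack_B^{-1}(v)$ for odd $n$ via a local $\alpha$-equivariant modification of the decreasing centrally symmetric permutree $\mathcal I_\delta^{-1}(w)$ near the central pair of vertices at positions $n$ and $n+1$; because these positions carry complementary decorations ($\upCirc$ and $\downCirc$) and neither lies on the axis of symmetry, such a modification should always produce a distinct centrally symmetric decreasing permutree with the same postorder reading, which in fact would give the stronger Conjecture~\ref{ConjCoxx1}.
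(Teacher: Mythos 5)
The statement you are trying to prove is stated in the paper as an open conjecture (Conjecture~\ref{ConjCoxx2}, a weakening of Conjecture~\ref{ConjCoxx1}); the paper offers no proof, so there is nothing to compare your argument against except its own internal soundness. Unfortunately, your main route is built on a claim that the paper itself explicitly refutes. You propose to show that $v\in B_n$ is uniquely sorted under $\stack_B$ if and only if $v$ is in the image of $\stack_B$ and has exactly $\frac{n}{2}$ right descents, and to deduce the conjecture from the fact that $\frac{n}{2}$ must be an integer. But Section~\ref{SecConclusion} of the paper records the counterexample $25136847\in B_4$: it has exactly one preimage under $\stack_B$ yet only one right descent (not $\frac{4}{2}=2$). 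So the implication ``uniquely sorted $\Rightarrow$ exactly $\frac{n}{2}$ right descents,'' which is precisely the direction you need, is false. The same example kills the refined version of your parity argument: it has $\lvert\Des^A(v)\rvert=2\neq n-1=3$, so unique sortedness does not force $\lvert\Des^A(v)\rvert=n-1$, and the claimed ``parity imbalance in the folded composition'' has no foundation. The type-$A$ and type-$\widetilde A$ characterizations (Proposition~\ref{PropUniquely}, Theorem~\ref{ThmCoxx4}) simply do not transfer to type $B$ in the form you assume, which is exactly why the paper poses this as a question rather than a theorem.

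Your fallback — a fixed-point-free involution on $\stack_B^{-1}(v)$ obtained by an $\alpha$-equivariant local modification of $\mathcal I_\delta^{-1}(w)$ near positions $n$ and $n+1$ — would, if it worked, prove the stronger Conjecture~\ref{ConjCoxx1}, but as written it is only an assertion (``such a modification should always produce a distinct\ldots permutree with the same postorder reading''). You neither specify the modification nor verify that it preserves the postorder reading, preserves central symmetry, is an involution, and is fixed-point-free; any of these could fail for particular trees (e.g., when the vertices at positions $n$ and $n+1$ are comparable in $\preceq$, or when the modification changes which vertex is last in postorder among an antichain). The preliminary ``postorder $\alpha$-equivariance'' observation is correct (it is essentially the content of \eqref{EqCoxx3} and the surrounding discussion, using that the $180^\circ$ rotation is an anti-automorphism of $(V(T),\preceq)$ for antisymmetric decorations), but it does not by itself yield any parity constraint on $\lvert\stack_B^{-1}(v)\rvert$. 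As it stands, the proposal does not constitute a proof, and its central structural claim is demonstrably false.
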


Recall from Proposition~\ref{PropUniquely} that a permutation in $S_n$ is uniquely sorted if and only if it is in the image of $\stack$ and has exactly $\frac{n-1}{2}$ right descents. In light of Theorem~\ref{ThmCoxx2}, one might ask if the elements of $B_n$ with exactly $1$ preimage under $\stack_B$ are the elements in the image of $\stack_B$ with exactly $\frac{n}{2}$ right descents. This turns out to be false. For example, $25136847\in B_4$ has exactly $1$ preimage under $\stack_B$, but it has only $1$ right descent. 

\begin{question}
Let $n\geq 2$ be even. Suppose $v\in B_n$ is in the image of $\stack_B$ and has exactly $\frac{n}{2}$ right descents. Is it necessarily true that $\lvert \stack_B^{-1}(v)\rvert =1$? 
\end{question}

\begin{question}
Let $n\geq 2$ be even. How many elements of $B_n$ in the image of $\stack_B$ have exactly $\frac{n}{2}$ right descents?  
\end{question}

Theorem~\ref{ThmCoxx7} tells us that $\stack_B^n(w)=e$ for all $w\in B_n$ and that there exists $v\in B_n$ such that $\stack_B^{n-1}(v)\neq e$.

\begin{question}
How many elements $v$ of $B_n$ are such that $\stack_B^{n-1}(v)\neq e$? 
\end{question}

The sequence of numbers requested in the previous question appears to be new; it begins with the numbers $1,2,6,32,200,1566$.

In \cite{DefantMonotonicity}, the author gave asymptotic lower and upper bounds for $\mathcal D_n$, the average number of iterations of $\stack$ needed to send a permutation in $S_n$ to the identity permutation $e$. Let us similarly define $\mathcal D_n^B$ to be the average number of iterations of $\stack_B$ needed to send an element of $B_n$ to the identity element $e$. We believe the major factors controlling the number of iterations of $\stack$ needed to send permutations to the identity should also be present in type $B$, so we have the following conjecture. 

\begin{conjecture}
We have \[\lim_{n\to\infty}\left(\frac{\mathcal D_n}{n}-\frac{\mathcal D_n^B}{n}\right)=0.\]
\end{conjecture}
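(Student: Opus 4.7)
The plan is to couple uniformly random elements of $B_n$ with uniformly random permutations in $S_n$ and show that the number of iterations each needs to be sorted agrees up to a sublinear error. For $w \in B_n$, let $w^+ \in S_n$ denote the standardization of $w(1)w(2)\cdots w(n)$. Because a signed permutation is determined by its window, sampling $w \in B_n$ uniformly amounts to sampling a uniformly random $n$-subset $X \subseteq [2n]$ together with a uniformly random bijection $[n] \to X$, so $w^+$ is uniformly distributed in $S_n$. Let $t_B(w)$ be the smallest $t$ with $\stack_B^t(w)=e$, and let $t(u)$ be the smallest $t$ with $\stack^t(u)=e$. Then $\mathcal D_n^B = \mathbb E_{w \in B_n}[t_B(w)]$ and $\mathcal D_n = \mathbb E_{u \in S_n}[t(u)]$, so it suffices to prove that $\mathbb E\,|t_B(w)-t(w^+)| = o(n)$.

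The key technical step is to compare a single application of $\stack_B$ on $w$ with a single application of $\stack$ on $w^+$. Using $\stack_B(w) = \mathcal P\circ \mathcal I_\delta^{-1}(w)$ with $\delta = \upCirc^n\downCirc^n$, the decreasing $\delta$-permutree is cut into a left half and a right half by the central upward red walls emitted by the first $n$ vertices and the central downward red walls emitted by the last $n$ vertices. Away from these central walls, the local structure of the left half of $\mathcal I_\delta^{-1}(w)$ is isomorphic to the decreasing binary plane tree $\mathcal I_{\downCirc^n}^{-1}(w^+)$. I would try to prove an approximate commutation statement of the form $(\stack_B(w))^+ = \stack(w^+)$ up to a perturbation supported within a bounded neighborhood of the central walls, and then iterate: after $t$ applications, the positive half agrees with $\stack^t(w^+)$ except on a region whose size I would control using the compulsiveness of $\stack_B$ (Proposition~\ref{PropCox1}) and central symmetry. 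Since a centrally symmetric element reaches $e$ exactly when its positive half does, this would yield $|t_B(w)-t(w^+)| \leq C(w)$ with $\mathbb E\,C(w)=o(n)$.

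The main obstacle is the error control in the approximate commutation. A priori, a single $\stack_B$ step on $w$ can shuffle entries across the center because the postorder reading in positions $1,\ldots,n$ depends globally on the central downward walls emitted by vertices in positions $n{+}1,\ldots,2n$. The nonlocal effect from the right half need not be bounded per iteration, and naive estimates would only give $O(n)$ error, which is too weak. To overcome this I would attempt a concentration argument on $B_n$: show that for all but a $o(1)$ fraction of $w \in B_n$, the central walls are ``generic'' enough that the interaction between the two halves dissipates within $o(n)$ steps, and handle the exceptional set via the crude upper bound $t_B(w) \leq n$ provided by Theorem~\ref{ThmCoxx7}. Establishing such a typical-case estimate, possibly via a probabilistic analysis of left-to-right maxima in random signed permutations and an inductive comparison of $\stack_B$-orbits with $\stack$-orbits, is where I expect the real difficulty to lie.
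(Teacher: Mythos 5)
You should first be aware that the statement you are trying to prove appears in the paper only as a \emph{conjecture}: the author offers no proof, explicitly presenting it as a suggestion for future work, so there is no argument to compare yours against and your task would be to supply a complete proof from scratch. Your submission does not do this. It is a research plan whose two load-bearing steps --- the ``approximate commutation'' statement $(\stack_B(w))^+=\stack(w^+)$ up to a perturbation confined near the central walls, and the concentration estimate asserting that for all but a $o(1)$ fraction of $w\in B_n$ the interaction between the two halves dissipates within $o(n)$ iterations --- are both left entirely unestablished, as you yourself acknowledge in your final paragraph. (A minor point in the setup: the window of an element of $B_n$ is not an arbitrary $n$-subset of $[2n]$ but a transversal of the pairs $\{j,2n+1-j\}$; the standardization of the window is still uniform on $S_n$, so your reduction to $\mathbb E\,\lvert t_B(w)-t(w^+)\rvert=o(n)$ survives, but the sampling description should be corrected.)

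More seriously, there is concrete evidence in the paper that the commutation step cannot hold in the bounded-perturbation form you describe. The element $u=(2n)23\cdots(2n-2)(2n-1)1\in B_n$ from the proof of Theorem~\ref{ThmCoxx7} satisfies $\stack_B^{n-1}(u)\neq e$, so $t_B(u)=n$; yet the standardization of its window is $n12\cdots(n-1)$, which West's map sorts in a single step, so $t(u^+)=1$ and $\lvert t_B(u)-t(u^+)\rvert=n-1$. Here the disturbance created at the central walls propagates for the entire length of the orbit rather than dissipating, so the whole weight of your argument falls on the unproved claim that such behavior occupies only a $o(1)$ fraction of $B_n$. Nothing in your write-up (nor in the paper, which gives only bounds on the maximum orbit size via Proposition~\ref{PropCox1} and Theorem~\ref{ThmCoxx7}, and no information about average orbit length in type $B$) establishes that estimate, and obtaining it would require genuinely new quantitative control over typical $\stack_B$-orbits. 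As it stands, the conjecture remains open and your proposal does not close it.
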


\subsection{Affine Stack-Sorting}

We saw in Section~\ref{SecAffine} that the affine stack-sorting map $\widetilde\stack$ shares several of the stack-sorting map's nice properties. In \cite{DefantMonotonicity}, the author proved a theorem about \emph{fertility monotonicity}; it states that $\lvert \stack^{-1}(w)\rvert \leq \lvert \stack^{-1}(\stack(w))\rvert $ for all $w\in S_n$. 

\begin{question}
Is it true that $\lvert \widetilde\stack^{-1}(w)\rvert \leq \lvert \widetilde\stack^{-1}(\widetilde\stack(w))\rvert $ for all $w\in \widetilde S_n$? 
\end{question}

In Section~\ref{SecAffine}, we proved affine analogues of the Decomposition Lemma and the Fertility Formula. These are special cases of the results used in \cite{DefantTroupes} to develop a theory of \emph{troupes}, which are families of binary plane trees that are closely related to the link between the stack-sorting map and free probability theory. It would be very interesting to have a parallel theory of \emph{affine troupes}, especially if such a theory could connect $\widetilde\stack$ with free probability theory (or some variant thereof).

\section{Acknowledgments}
The author thanks Nathan Reading for a helpful conversation. He also thanks the anonymous referees for providing very useful feedback. The author was supported by a Fannie and John Hertz Foundation Fellowship and an NSF Graduate Research Fellowship.

\end{document}